\def\Z{\mathbb{Z}}
\def\BHR{\mathop{\rm BHR}}
\newcommand{\ext}[1]{t^{#1}\textrm{-extendable} }
\newcommand{\fr}[1]{\stackrel{#1}{\rightharpoondown}}
\newtheorem{defi}{Definition}[section]
\newtheorem{prop}[defi]{Proposition}
\newtheorem{lem}[defi]{Lemma}
\newtheorem{rem}[defi]{Remark}
\newtheorem{thm}[defi]{Theorem}
\newtheorem*{nota}{Notation}
\newtheorem*{conj}{Conjecture}
\theoremstyle{definition}
\newtheorem{ex}[defi]{Example}
\begin{document}

\title[ On the Buratti--Horak--Rosa Conjecture\ldots]{On the Buratti--Horak--Rosa Conjecture about Hamiltonian Paths in Complete Graphs}

\author{Anita Pasotti}
\address{DICATAM - Sez. Matematica, Universit\`a degli Studi di Brescia, Via
Branze 43, I-25123 Brescia, Italy}
\email{anita.pasotti@unibs.it}

\author{Marco Antonio Pellegrini}
\address{Dipartimento di Matematica e Fisica, Universit\`a Cattolica del Sacro Cuore, Via Musei 41,
I-25121 Brescia, Italy}
\email{marcoantonio.pellegrini@unicatt.it}

\subjclass[2010]{05C38}
\keywords{Hamiltonian path, complete graph, edge-length}

\begin{abstract}
In this paper we investigate a problem proposed by Marco Buratti, Peter Horak and Alex
Rosa
(denoted by BHR-problem) concerning Hamiltonian paths in the complete graph with
prescribed edge-lengths.
In particular we solve $\BHR(\{1^a,2^b,$ $t^c\})$ for any
even integer $t\geq4$, provided that $a+b\geq t-1$.
Furthermore, for $t=4,6,8$
we present a complete solution of $\BHR(\{1^a,2^b,t^c\})$ for any positive integer
$a,b,c$.
\end{abstract}

 \maketitle

\section{Introduction}
Throughout this paper $K_v$ will denote the complete graph on $\{0,1,\dots,v-1\}$ for any positive integer $v$.
For the basic terminology on graphs we refer to \cite{Wbook}.
Following \cite{HR}, we define the {\it length} $\ell(x,y)$ of an edge $[x,y]$ of $K_v$ as
$$\ell(x,y)=min(|x-y|,v-|x-y|).$$
If $\Gamma$ is any subgraph of $K_v$, then the list of edge-lengths of $\Gamma$ is the multiset
$\ell(\Gamma)$ of the lengths (taken with their respective multiplicities) of  all the edges
of $\Gamma$. For our convenience, if a list $L$ consists of
$a_1$ $1'$s, $a_2$ $2'$s, \ldots, $a_t$ $t'$s,
we will write $L=\{1^{a_1},2^{a_2},\ldots,t^{a_t}\}$.

The following conjecture \cite{BM, W} is due to Marco Buratti
(2007, communication to Alex Rosa).

\begin{conj}[Buratti]
For any prime $p=2n+1$
and any multiset $L$ of $2n$ positive integers not exceeding $n$, there exists a
Hamiltonian path $H$ of $K_p$ with $\ell(H)=L$.
\end{conj}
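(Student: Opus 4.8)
The plan is to reformulate the conjecture additively over $\mathbb{Z}_p$ and then build the required path incrementally. A Hamiltonian path $x_0, x_1, \dots, x_{2n}$ of $K_p$ is determined by its start $x_0$ together with the sequence of \emph{directed} differences $d_i = x_i - x_{i-1} \in \mathbb{Z}_p \setminus \{0\}$, and the length of the $i$-th edge is $\min(|d_i|, p-|d_i|)$. Hence prescribing $\ell(H) = L$ amounts to choosing, for each copy of a length $\ell$ in $L$, a sign, i.e. a lift $d_i \in \{\ell, p-\ell\}$; the path is Hamiltonian exactly when the partial sums $s_k = x_0 + \sum_{i \le k} d_i$ are pairwise distinct modulo $p$. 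The conjecture thus becomes: for every such $L$ one can orient its elements so that the $2n+1$ partial sums are all distinct, hence exhaust $\mathbb{Z}_p$. Primality enters here essentially, since every nonzero length generates $\mathbb{Z}_p$ and no congruence obstruction of the type arising for composite orders can occur.

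First I would dispose of the base cases. For a single repeated length $\ell \le n$ the orbit $0, \ell, 2\ell, \dots, (p-1)\ell$ is already a Hamiltonian path with all edges of length $\ell$, because $\ell$ generates $\mathbb{Z}_p$. For lists supported on two lengths I would seek ``zig-zag'' realizations in which blocks of constant sign are alternated, so that the distinctness of the partial sums reduces to a tractable interval-covering argument.

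The heart of the argument would be an \emph{extendability} induction, echoing the notion the paper develops: I would try to show that a realization of a sub-list $L'$ can be prolonged or locally modified by inserting the remaining edges of a further length while preserving distinctness of the partial sums, carrying along enough control on the endpoints and on which residues remain free to keep each insertion legal. Iterating over the distinct lengths of $L$ — ideally treating the most constrained lengths first, namely the small ones, where $\mathbb{Z}_p$ offers the least room to move — would then assemble the full Hamiltonian path.

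The hard part will be precisely this global distinctness constraint: local gadgets realizing a prescribed block of lengths interact non-locally through the partial sums, so certifying that a combined construction never revisits a vertex for an \emph{arbitrary} mixture of lengths is delicate. This is presumably why the conjecture remains open in general, and why the paper restricts to lists of the shape $\{1^a, 2^b, t^c\}$, in which the interaction among the few lengths is simple enough to analyse completely. I would expect the fully general case to demand either a far more flexible extendability lemma or an entirely different — perhaps probabilistic or algebraic — mechanism for guaranteeing that the partial sums cover all of $\mathbb{Z}_p$.
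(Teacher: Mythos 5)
The statement you were asked about is not a theorem of the paper at all: it is Buratti's conjecture, which the paper records as an \emph{open} problem and never proves. The paper only establishes special cases (lists whose underlying set is $\{1,2,t\}$ with $t$ even, under the condition $a+b\geq t-1$, plus complete solutions for $t=4,6,8$), building on earlier work that settled one and two distinct edge-lengths. So there is no proof in the paper to compare yours against, and any complete ``proof'' of this statement would be a major new result.

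Measured as a proof, your proposal has a genuine gap, and you essentially admit it yourself. The reformulation in the first paragraph is correct and standard: realizing $L$ amounts to choosing a sign $d_i\in\{\ell,p-\ell\}$ for each entry and asking that the partial sums $s_k$ be pairwise distinct modulo $p$, and the one-length base case does follow because any nonzero $\ell$ generates $\Z_p$. But everything after that is a plan, not an argument. The two-length case is waved at via an unspecified ``interval-covering argument'' (the actual proofs in the literature required real work), and the central ``extendability induction'' is never formulated: you do not state the insertion lemma, say what invariant on the partial sums it preserves, or show that an insertion of edges of a new length can always be made compatible with the residues already used. This is exactly the non-local obstruction you identify in your last paragraph --- local gadgets interact globally through the partial sums --- and it is the reason the conjecture is open. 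Note also that the paper's own extendability machinery (Lemmas \ref{c,c+4w} and \ref{senzak}) works with \emph{linear} realizations of very structured lists and exploits the arithmetic of the specific lengths $1,2,t$; nothing in it suggests a mechanism that would survive an arbitrary mixture of lengths modulo a prime. So your proposal should be read as a correct restatement of the problem together with an accurate diagnosis of its difficulty, not as a proof or even the skeleton of one.
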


The conjecture is almost trivially true
in the case that $L$ has just one edge-length, but, in general, the problem seems to be
very difficult.
The case of exactly two distinct edge-lengths
 has been solved independently in \cite{DJ, HR} and
Mariusz Meszka checked that the conjecture is true for all primes $p\leq23$ by computer.
Some  general results on the conjecture can be found in \cite{HR}, in particular that it
is true when there is an edge-length occurring ``sufficiently many times'' in $L$.

In \cite{HR} Peter Horak and Alex Rosa generalized Buratti's conjecture.
Such a generalization has been restated in an easier form in \cite{1235}, as follows.

\begin{conj}[Horak and Rosa]\label{HR iff B}
Let $L$ be a list of $v-1$ positive integers not exceeding $\big\lfloor \frac{v}{2}\big\rfloor$.
Then there exists a Hamiltonian path $H$ of $K_v$ such that $\ell(H)=L$ if, and only if, the following condition holds:
\begin{equation}\label{B}
\left. \begin{array}{c}
\textrm{for any divisor $d$ of $v$, the number of multiples of $d$} \\
\textrm{appearing in $L$ does not exceed $v-d$.}
\end{array}\right.
\end{equation}
\end{conj}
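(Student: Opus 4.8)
The plan is to prove the biconditional by treating the two implications separately, handling necessity directly and devoting the real work to sufficiency. For necessity, suppose $H$ is a Hamiltonian path of $K_v$ with $\ell(H)=L$ and fix a divisor $d$ of $v$. I would colour each vertex $x\in\{0,\dots,v-1\}$ by the residue $x\bmod d$, giving $d$ colour classes each of size $v/d$. Since $d\mid v$ we have $v-|x-y|\equiv-|x-y|\pmod d$, so for any edge $[x,y]$ the length $\ell(x,y)$ is a multiple of $d$ if and only if $x\equiv y\pmod d$, i.e. exactly when $[x,y]$ is monochromatic. Reading $H$ as a sequence of $v$ vertices and decomposing it into maximal monochromatic runs, say $k$ of them, the path has $k-1$ colour-changing edges and hence $v-k$ monochromatic ones; since all $d$ colours occur we have $k\ge d$, so the number of multiples of $d$ in $L$ equals $v-k\le v-d$, which is exactly \eqref{B}.

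For sufficiency I would proceed by induction, aiming to realize every list $L$ satisfying \eqref{B} by peeling off a controlled piece while keeping the divisor condition intact at every step. The engine would be a concatenation/extendability calculus: one first isolates classes of partial realizations that are \emph{$t$-extendable}, meaning they can absorb a prescribed block of further edges without creating forbidden coincidences of labels, and then shows that realizing $L$ reduces to realizing a strictly smaller admissible list $L'$. Concretely, I would select a length $\ell$ of large multiplicity, build an explicit segment that consumes several of its copies (a run advancing by $\pm\ell$), graft it onto a realization of the reduced list, and take as base cases the trivially solved single-length lists and the two-distinct-length lists settled in \cite{DJ,HR}.

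The hard part will be the inductive step in full generality. Condition \eqref{B} is a simultaneous constraint over all divisors of $v$, so any reduction that lowers $v$ or deletes edges must be shown to preserve \eqref{B} for every divisor at once, not merely for the length being targeted; producing a single reduction with this property for an arbitrary admissible $L$ is the crux. The difficulty is sharpened by the fact that the biconditional contains Buratti's conjecture as the case $v$ prime, where \eqref{B} is vacuous and sufficiency collapses to the bare assertion that every multiset of lengths at most $(v-1)/2$ is realizable. A proof of the full statement must therefore supply realizations with no divisor slack to exploit, so the genuine obstacle is to make the extendability calculus uniform across all $L$ and all $v$ simultaneously.
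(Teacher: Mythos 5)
Your necessity argument is correct, and it is essentially the standard one (it appears already in \cite{HR}): since $d\mid v$, an edge $[x,y]$ has $d\mid\ell(x,y)$ exactly when $x\equiv y\pmod d$, and decomposing the Hamiltonian path into maximal monochromatic runs with respect to residues mod $d$ gives at least $d$ runs, hence at most $v-d$ monochromatic edges. No complaint there.

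The genuine gap is in the sufficiency direction, and it is not a repairable local flaw: the statement you were asked to prove is stated in the paper as a \emph{conjecture}, and the paper contains no proof of it. What the paper actually proves (Theorem \ref{main} and Section 5) is the conjecture restricted to lists with underlying set $\{1,2,t\}$, $t$ even, under the hypothesis $a+b\geq t-1$, together with complete solutions for $t=4,6,8$. The ``extendability calculus'' you sketch is, in fact, a fair description of the paper's real machinery --- special linear realizations of types $1$ and $2$ (Lemmas \ref{special1}, \ref{special2}, \ref{special1e2}) and $t^{4w}$-extendable realizations (Lemmas \ref{c,c+4w} and \ref{senzak}) --- but that machinery only works there because the edge-lengths are the three fixed values $1,2,t$: explicit base realizations for small $c$ can be written down by hand, and the absorption steps only insert additional $2$'s or $t$'s, so there is no divisor bookkeeping to do. No reduction is known that peels a piece off an \emph{arbitrary} admissible list $L$ while preserving condition \eqref{B} for every divisor of $v$ simultaneously, and your own last paragraph concedes exactly this: the inductive step ``in full generality'' is missing. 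Note also that the case $v$ prime, where \eqref{B} is vacuous, is Buratti's original conjecture, which is itself open (verified only for $p\leq 23$ and for lists with at most two, or special configurations of three, distinct lengths). So what you have is a correct proof of the easy implication plus a research program for the hard one; as a proof of the biconditional it fails at precisely the point where the problem is open.
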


Following \cite{1235}, by $\BHR(L)$ we will denote the above conjecture for a given list $L$.
The case of exactly three distinct edge-lengths has been
solved when these lengths are $1,2,3$ in \cite{CDF} and when they are
$1,2,5$ or $1,3,5$ or $2,3,5$ in \cite{1235}.
More in general, in \cite{1235} we proved that
$\BHR(\{1^a,2^b,3^c,5^d\})$ holds for all integers $a,b,c,d\geq0$
finding, in such a way, the first set $S$ of size four for which we can say that
$\BHR(L)$ is true when the underlying-set of the list $L$ is $S$.\\
In the same paper we have also shown how this problem is related to the existence
of cyclic graph decompositions. In detail, we proved that $\BHR(L)$ can be reformulated as follows.

\begin{conj}
A Cayley multigraph $Cay[\Z_v:\Lambda]$ admits a cyclic decomposition into Hamiltonian paths if and only if $\Lambda=L \ \cup \ -L$
with $L$ satisfying condition $(\ref{B})$.
\end{conj}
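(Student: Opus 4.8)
The plan is to establish the equivalence of the two formulations by exhibiting an explicit correspondence between Hamiltonian paths of $K_v$ with prescribed edge-length list $L$ and cyclic decompositions of $Cay[\Z_v:L\cup-L]$ into Hamiltonian paths; condition $(\ref{B})$ will then be recognized as the common necessary condition governing both sides. First I would identify the vertex set $\{0,1,\dots,v-1\}$ of $K_v$ with $\Z_v$ and encode a Hamiltonian path $H=(x_0,x_1,\dots,x_{v-1})$ by its sequence of signed differences $d_i=x_{i+1}-x_i\in\Z_v$, for $1\le i\le v-1$. The requirement that $H$ visit every vertex exactly once is precisely that the partial sums $0,d_1,d_1+d_2,\dots,\sum_i d_i$ form a permutation of $\Z_v$, and the condition $\ell(H)=L$ translates into $|d_i|=\ell_i$ in the circular sense, so that, as multisets, $\{d_1,\dots,d_{v-1}\}\cup\{-d_1,\dots,-d_{v-1}\}$ equals $L\cup-L=\Lambda$.

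For the forward direction, given such an $H$ I would form its \emph{development}, the family of translates $H+s=(x_0+s,\dots,x_{v-1}+s)$ for $s\in\Z_v$. Each $H+s$ is again a Hamiltonian path, and the translates of a single edge with signed difference $d_i$ are pairwise distinct, so the $v$ translates are edge-disjoint. A counting argument then shows their union is exactly $Cay[\Z_v:\Lambda]$: the base path contributes $v-1$ edges, the development multiplies this by $v$, and for each length $\ell$ appearing with multiplicity $m$ in $L$ the resulting $mv$ edge-instances match those of the multigraph, since each occurrence of $\ell$ in $L$ places both $\ell$ and $-\ell$ in $\Lambda$. This yields the desired cyclic decomposition. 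Conversely, since a cyclic decomposition consists of exactly $v$ paths forming a single $\Z_v$-orbit, it has a base path, a single representative of that orbit, which is a Hamiltonian path whose edge-lengths, read off its differences, reconstitute $L$. Hence the two existence statements coincide, and because $(\ref{B})$ is the necessary condition already built into $\BHR(L)$, it is equivalently the condition for the decomposition to exist.

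The step I expect to be the main obstacle is verifying carefully that the development is a genuine decomposition rather than merely a cover, and in particular the behaviour of edges of length $v/2$ when $v$ is even. For such an edge $\{s,s+v/2\}$ the translation by $v/2$ fixes it, so its $\Z_v$-orbit has size $v/2$ rather than $v$, and a naive development would traverse each such edge twice. This is precisely reconciled by the multiset $\Lambda=L\cup-L$, in which $v/2=-v/2$ is forced to appear with multiplicity two for every occurrence in $L$, so that the length-$v/2$ edges carry multiplicity two in $Cay[\Z_v:\Lambda]$ and the counts align. The remaining care lies in matching condition $(\ref{B})$ on both sides: phrased in $K_v$ as a bound on the number of multiples of each divisor $d$ of $v$, it corresponds to the obstruction coming from edges trapped inside the cosets of the index-$d$ subgroup of $\Z_v$, since at least $d-1$ of the $v-1$ edges must cross between these $d$ cosets for a spanning path to exist; this is the necessary condition on both sides and closes the equivalence.
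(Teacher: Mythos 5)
The statement you were asked to prove is a \emph{conjecture}, not a theorem: the paper gives no proof of it, presenting it as the reformulation of $\BHR(L)$ whose equivalence with Conjecture~\ref{HR iff B} was established in the earlier work \cite{1235}. It cannot be proved outright by any argument of the kind you give, because its ``if'' direction --- every $\Lambda=L\cup -L$ with $L$ satisfying $(\ref{B})$ yields a multigraph with a cyclic Hamiltonian path decomposition --- contains the full, open, BHR conjecture. What your development-plus-counting argument actually establishes (and this is the correct content of the reformulation) is the equivalence: $Cay[\Z_v:\Lambda]$ admits a cyclic decomposition into Hamiltonian paths if and only if $\Lambda=L\cup -L$ for a list $L$ admitting a cyclic realization. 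Your closing inference, that $(\ref{B})$ ``is equivalently the condition for the decomposition to exist,'' silently invokes the implication ``$(\ref{B})$ implies $L$ has a realization,'' which is precisely $\BHR(L)$ and is open. So read as a proof of the statement as written, your proposal has an irreparable gap; read as a proof that this conjecture is equivalent to Conjecture~\ref{HR iff B}, it is essentially the right argument, and the one the paper attributes to \cite{1235}.

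Within the equivalence itself two points need repair. First, the claim that ``the translates of a single edge\dots are pairwise distinct, so the $v$ translates are edge-disjoint'' is a non sequitur, and in fact the translated paths are \emph{not} edge-disjoint as subgraphs of $K_v$: any two edges of $H$ having the same length lie in the same $\Z_v$-orbit, so whenever a length has multiplicity $m\geq 2$ the translates overlap heavily. What makes the development a decomposition of the \emph{multigraph} is exactly the count you perform afterwards --- the covering multiplicity is constant on each length class by $\Z_v$-invariance, and the totals ($mv$ instances against $v$ edges of multiplicity $m$ for $\ell<v/2$, and $mv$ instances against $v/2$ edges of multiplicity $2m$ for $\ell=v/2$) match those of $Cay[\Z_v:\Lambda]$ --- so the disjointness claim should simply be dropped and the counting allowed to carry the proof. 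Second, in the converse you assert that a cyclic decomposition ``consists of exactly $v$ paths forming a single $\Z_v$-orbit.'' If ``cyclic'' merely means invariant under translation, this is false: in $\Z_5$, the five translates of $[0,1,2,3,4]$ together with the five translates of $[0,2,4,1,3]$ form a translation-invariant decomposition of $Cay[\Z_5:\{1^4,2^4,3^4,4^4\}]$ into ten Hamiltonian paths in two orbits, and there writing $\Lambda=L\cup-L$ forces $|L|=8>v-1$, so $(\ref{B})$ fails. Hence the single-orbit property must either be built into the definition of cyclic decomposition (as the intended reading requires) or be derived, and your converse is incomplete without settling this.
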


For reader convenience we recall the definition of a Cayley multigraph, see \cite{BCD,BMnew}.
Given an additive group $G$ and a list $\Lambda \subseteq G\setminus\{0\}$ such that for any $g\in\Lambda$
we also have $-g\in\Lambda$, the \emph{Cayley multigraph} on $G$ with connection multiset $\Lambda$,
denoted by $Cay[G:\Lambda]$, is the graph with vertex set $G$ and where the multiplicity of an edge
$[x,y]$ is the multiplicity of $x-y$ in $\Lambda$.
We point out that if $G=\mathbb{Z}_v$ then  $Cay[\mathbb{Z}_v:\Lambda]$
is also called a \emph{circulant multigraph}.

In this paper we want to describe a general strategy to solve $\BHR(L)$  when the underlying-set of $L$ is $\{1,2,t\}$ for any arbitrary even integer $t\geq 4$.
\bigskip

All the known results about $\BHR(L)$-problem have been obtained thanks to \emph{cyclic}
and
\emph{linear realizations}.
A \emph{cyclic realization} of a list $L$ with $v-1$ elements each
from the set
$\{1,\ldots,\lfloor\frac{v}{2}\rfloor\}$
is a Hamiltonian path $P$
of $K_{v}$ such that the multiset of edge-lengths
of $P$ equals $L$.
Hence, it is clear that
$\BHR(L)$
can be also formulated as: every such a list $L$ has a cyclic realization
if and only if condition (\ref{B}) is satisfied.
For example, the path $[0,6,5,1,9,7,3,2,8,4]$ is a cyclic realization of
$L=\{1^2,2^2,4^5\}$.\\
A \emph{linear realization} of a list $L$ with $v-1$ positive integers not exceeding
$v-1$ is a Hamiltonian path
$[x_0,x_1,\ldots,x_{v-1}]$ of $K_v$
such that $L=\{|x_i-x_{i+1}|\ |\ i=0,\ldots,v-2\}$.
For instance, one can easily check that the path $[0,8,7,6,4,2,10,9,1,3,5]$ is a
linear realization
of $L=\{1^3,2^4,8^3\}$.\\
We denote by $cL$ and $rL$
a cyclic and a linear realization of $L$, respectively.
In this paper we shall choose $0$ as first vertex of any path.

\begin{rem}\label{cyclin}
Every linear realization of a list $L$ can be viewed as a
cyclic realization
of a suitable list $L'$, but not necessarily of the same list.
Anyway if all the elements in the list are less than or equal to $\lfloor\frac{|L|+1}{2}\rfloor$, then every linear realization of $L$ is
also a cyclic realization of the same list $L$ $($see Section $3$ of \cite{HR}$)$.
\end{rem}

In Section 2 we introduce two new classes of linear realizations and we show how
they are fundamental to solve $\BHR(L)$-conjecture, in particular, when an element of $L$
is not a fixed number as in the case investigated in this paper.
In Section 3 we construct linear realizations of $L=\{1,2^b,t^c\}$, while in Section 4,
using the results of previous sections,
we construct linear realizations for the general case $L=\{1^a,2^b,t^c\}$
with $a\geq2$. Finally, in Section 5 we present a complete solution of $\BHR(\{1^a,2^b,t^c\})$,
for $t=4,6,8$.\\
The main result of the paper is the following.

\begin{thm}\label{main}
If $t\geq 4$ is an even integer, then $\BHR(\{1^a,2^b,t^c\})$ holds for any
$a,b,c\geq 1$ with $a+b\geq t-1$.
\end{thm}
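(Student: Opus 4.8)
The plan is to prove Theorem~\ref{main} by constructing explicit linear realizations of $L=\{1^a,2^b,t^c\}$ and invoking Remark~\ref{cyclin} to convert them into cyclic realizations. The first observation is that here $v=a+b+c+1$, so the lengths $1,2$ are automatically small, while the length $t$ may or may not exceed $\lfloor v/2\rfloor$. Thus I would first verify that under the hypothesis $a+b\ge t-1$ the list satisfies condition~(\ref{B}): the only divisors $d$ of $v$ that could cause trouble are those for which $t$ (and possibly $1,2$) are multiples, and the bound $a+b\ge t-1$ should be exactly what guarantees the count of multiples of each $d$ stays at most $v-d$. This reduces the theorem to the purely constructive statement that whenever $a+b\ge t-1$, a linear realization of $\{1^a,2^b,t^c\}$ exists, and that this realization uses only lengths $\le\lfloor v/2\rfloor$ so that Remark~\ref{cyclin} applies.

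Next I would set up the induction/recursion that the paper's Sections 3 and 4 are evidently built to support. The natural base case is $a=1$, handled by the constructions of Section~3 for $L=\{1,2^b,t^c\}$; the inductive step raises $a$ by using the extendability machinery (the \ext{} notion and the $\fr{}$ reduction from Section~2) to adjoin extra edges of length $1$ while preserving a realization. Concretely, I expect the argument to show that a linear realization of $\{1^a,2^b,t^c\}$ can be built from a shorter realization by prepending or appending a run of consecutive integers (contributing $1$'s) or by splicing in blocks that emit the length-$t$ edges in a controlled pattern. The role of the hypothesis $a+b\ge t-1$ is presumably to provide enough ``short'' edges to bridge across the gaps of size $t$ without revisiting a vertex; with fewer than $t-1$ short edges one cannot connect the residue classes modulo $t$ that the long edges jump between.

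The key technical device will be to organize the vertices by their residues modulo $t$ (or by blocks of size $t$) and to track how the length-$t$ edges move between consecutive blocks while the length-$1$ and length-$2$ edges traverse within and between adjacent positions. I would handle the parameter $c$ (the number of $t$-edges) by a separate induction or by a direct block construction, reducing the general $c$ to small residual cases via a ``peeling'' argument that removes two $t$-edges at a time, which is natural since $t$ is even and symmetric configurations pair up cleanly. Throughout, each constructed path must begin at $0$ and avoid repeating vertices.

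The hard part, I expect, is not any single construction but the bookkeeping needed to make one uniform family of realizations cover all $(a,b,c)$ with $a+b\ge t-1$ simultaneously, since $t$ is a free even parameter rather than a fixed value. In earlier papers the three lengths were concrete numbers, so finitely many explicit paths plus a few recursions sufficed; here one must produce realizations whose combinatorial pattern is described \emph{parametrically} in $t$, and verify the edge-length multiset and the no-repeated-vertex condition for all $t$ at once. I anticipate that the genuine obstacle is the interface between the base constructions of Section~3 (the case $a=1$) and the extension step of Section~4, namely proving that the specific realizations from Section~3 are $t^c$-extendable in the precise sense of Section~2 so that the induction on $a$ can start and propagate. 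Once that extendability is established and the condition $a+b\ge t-1$ is seen to guarantee both (\ref{B}) and the feasibility of the connecting short edges, the theorem follows by assembling these pieces.
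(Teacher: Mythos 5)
Your reduction---construct linear realizations and convert them via Remark \ref{cyclin}, after checking that $a+b\geq t-1$ forces condition (\ref{B})---is sound, and your reading of the hypothesis is correct: $a+b\geq t-1$ is exactly the necessary condition for a \emph{linear} realization (the counting lemma at the start of Section \ref{3}). The genuine gap is in your induction scheme. Raising $a$ is done by the trivial Remark \ref{RemA>a} (prepending $[0,1,\ldots,A]$), not by the $t^{4w}$-extendable machinery, which adjoins length-$t$ and length-$2$ edges; and crucially, raising $a$ keeps $b$ and $c$ fixed. Hence from your base case $a=1$ you can only ever reach lists with $b\geq t-2$, and this restriction is unavoidable: for $a=1$ and $b\leq t-3$ no linear realization of $\{1,2^b,t^c\}$ exists at all, since $a+b\geq t-1$ fails. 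So lists such as $\{1^{t-2},2,t^c\}$ are unreachable by your plan. What is needed---and what Section \ref{4} actually provides---is a second, \emph{independent} family of base constructions, one for every splitting $a+b=t-1$ with $a\geq 2$ (Proposition \ref{a+b=t-1}), built by the residue-classes-modulo-$t$ device you mention, with separate case analysis on $a\bmod 4$ and $t\bmod 4$ for $c\leq 3$ and then propagation in $c$ by the extendability lemmas. You have the residue-class idea but assign it the wrong role in the architecture: Section \ref{4} is not the inductive step on $a$; it is a new set of base cases, without which the theorem would only be proved for $b\geq t-2$.

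There is a second, smaller gap even inside the range your scheme covers. For $t\equiv 2\pmod 4$, $b=t-2$ and $c=tk+d$ with $d\equiv 1,2\pmod 4$, Propositions \ref{c even} and \ref{t2a1} only reach $b\geq t-1$, and the paper constructs no linear realization with $b=t-2$ in that case: it falls back on genuinely \emph{cyclic} realizations (Remark \ref{cyclict2a1}) for $a=1$, and for $a=2$ on two ad hoc special realizations $S_1\{1^2,2^{t-2},t\}$ and $S_1\{1^2,2^{t-2},t^2\}$ fed into Lemmas \ref{c,c+4w} and \ref{senzak}(ii), with Remark \ref{RemA>a} handling $a\geq 3$. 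So ``linear realizations plus Remark \ref{cyclin}'' is not sufficient as a uniform strategy; your proof would stall precisely there. A minor further point: peeling $t$-edges two at a time does not match what the splicing supports---Lemma \ref{c,c+4w} adds four length-$t$ edges per application and Lemma \ref{senzak} adds $t$ of them, which is why the residual small cases in Sections \ref{3} and \ref{4} are organized modulo $4$ and modulo $t$, not modulo $2$.
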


\section{Special linear realizations}

In this section we introduce two particular kinds of linear realizations which turned out
to
be very useful for solving $\BHR(\{1^a,2^b,t^c\})$.

\begin{defi}
Given a list $L$, we will say that a linear realization $rL$ is
\begin{itemize}
\item \emph{special of type $1$}
if $|L|$ and $|L|-1$ are adjacent in $rL$;
\item \emph{special of type  $2$} if
its endpoints are $0$ and $1$.
\end{itemize}
For short, we will denote by $S_1L$ and $S_2L$ a special linear realization of $L$
of type $1$ and $2$, respectively. Also, by $S_{1,2}L$ we will mean a special linear realization
of $L$ of both type $1$ and $2$.
\end{defi}

Now we will show how starting from a special linear realization of a given list $L$
it is possible to obtain a special linear realization of an infinite class of lists.

\begin{lem}\label{special1}
If a list $L$ admits a special linear realization of type $1$, then also the list
$L'=L\cup\{2^b\}$ admits a special linear realization of type $1$, for any positive integer $b$.
\end{lem}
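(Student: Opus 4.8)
The plan is to reduce the whole statement to the single case $b=1$ and then iterate, the point being that the construction I have in mind will again output a realization of type $1$, so the inductive step is literally the base step applied once more. Write $n=|L|$, so that the given realization $S_1L=[0,\dots]$ is a Hamiltonian path on the vertex set $\{0,1,\dots,n\}$ in which the two largest vertices, namely $n-1$ and $n$, occur consecutively. In particular the edge $[n-1,n]$, of length $1$, is present in $S_1L$.

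For $b=1$ I would produce a realization of $L'=L\cup\{2\}$ on the enlarged vertex set $\{0,1,\dots,n+1\}$ by a purely local modification: take the edge joining $n-1$ and $n$ and \emph{subdivide} it by inserting the new vertex $n+1$ between its endpoints. Wherever the pattern $n-1,n$ appears in the path I replace it by $n-1,n+1,n$ (and symmetrically if it appears as $n,n-1$). This keeps the object a Hamiltonian path, since a single vertex has been spliced into one edge, and it leaves $0$ as the first vertex because $n-1,n>0$. The key bookkeeping is that subdivision removes the edge $[n-1,n]$ of length $1$ and creates the two edges $[n-1,n+1]$ and $[n+1,n]$, of lengths $2$ and $1$ respectively; hence the net effect on the multiset of edge-lengths is exactly the addition of one length-$2$ edge, so the new path realizes $L\cup\{2\}$. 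Moreover the new maximum vertex $n+1$ is, by construction, adjacent to $n=|L'|-1$, so the resulting realization is again special of type $1$.

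With the base case in hand, the general statement follows by induction on $b$: given a special type-$1$ realization of $L\cup\{2^{k}\}$, whose two largest vertices $n+k-1$ and $n+k$ are consecutive, the same subdivision applied to the edge $[n+k-1,n+k]$ yields a special type-$1$ realization of $L\cup\{2^{k+1}\}$, and iterating $b$ times gives $L\cup\{2^{b}\}$. I expect no serious obstacle here; the only delicate points are the two verifications that make the induction close, namely that the local modification produces a genuine linear realization whose length multiset gains exactly one $2$ (and no spurious length), and that the inserted vertex becomes the new maximum sitting next to the second largest vertex, which is precisely what preserves the type-$1$ property from one step to the next.
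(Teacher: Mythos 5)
Your proof is correct and is essentially identical to the paper's: both subdivide the length-$1$ edge joining the two largest vertices $|L|-1$ and $|L|$ by the new maximum vertex, observe that this adds exactly one edge of length $2$ and keeps the top two vertices adjacent (so the type-$1$ property is preserved), and then iterate $b$ times.
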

\begin{proof}
Set $v=|L|+1$ and let $S_1L=[0,\ldots, v-1, v-2,\ldots]$. Consider the realization obtained from
$S_1L$ by adding the element $v$ between $v-1$ and $v-2$, namely
$S_1L'=[0,\ldots, v-1, v, v-2,\ldots]$. It is easy to see that $S_1L'$
is a special linear realization of type $1$ of $L'=L\cup \{2\}$.
In order to obtain a $S_1(L\cup\{2^b\})$ it is sufficient to apply
the above process $b$ times.
If $S_1L=[0,\ldots, v-2, v-1,\ldots]$ the thesis can be obtained in the same way.
\end{proof}

\begin{ex}
Consider $S_1\{1,2^{15},16\}=[0,2,4,6,8,10,12,14,16,17,1,3,5,7,9,$ $11,13,15]$.
Following the proof of Lemma $\ref{special1}$ if we are looking, for instance, for
$S_1\{1,2^{18},16\}$ we have to apply three times the process,  obtaining
$[0,2,4,6,8,10, 12,$ $14,16,
\mathbf{18,20, 19},17,1,3,5,7, 9,11,13,15]$.
\end{ex}

\begin{lem}\label{special2}
If a list $L$ admits a special linear realization of type $2$, then also the list
$L'=L\cup\{2^b\}$ admits a special linear realization of type $2$, for any integer $b\geq2$.
\end{lem}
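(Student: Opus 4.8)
The plan is to build an $S_2(L\cup\{2^b\})$ by \emph{translating} the given realization upward and then reattaching two short ``ladders'' of length-$2$ edges at the bottom. Set $v=|L|+1$ and write $S_2L=[0,x_1,\dots,x_{v-2},1]$, a Hamiltonian path on $\{0,1,\dots,v-1\}$ from $0$ to $1$. First I would translate the whole path upward by $b$, replacing each vertex $x$ by $x+b$. This produces a Hamiltonian path $\widehat P$ on the top block $\{b,b+1,\dots,v+b-1\}$ whose list of edge-lengths is again $L$ (translation preserves every difference), and whose two endpoints are now $b$ and $b+1$, the images of the former endpoints $0$ and $1$. It is precisely here that the hypothesis ``type $2$'' is used: because the endpoints of $S_2L$ are the consecutive integers $0$ and $1$, after the shift they become the consecutive integers $b$ and $b+1$, one even and one odd.

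Next I would use the $b$ now-free vertices $\{0,1,\dots,b-1\}$ to reconnect $0$ and $1$ to $\widehat P$ using edges of length $2$ only. Split these vertices by parity into the chain $C_0=[0,2,4,\dots]$ of even vertices and the chain $C_1=[\dots,3,1]$ of odd vertices; each consists of consecutive integers two apart, hence is joined internally by length-$2$ edges. The top of $C_0$ is the largest free even integer, which is two less than the even endpoint of $\widehat P$ (namely $b$ if $b$ is even, or $b+1$ if $b$ is odd); likewise the top of $C_1$ is the largest free odd integer, two less than the odd endpoint of $\widehat P$. I would therefore glue $C_0$ to the even endpoint and $C_1$ to the odd endpoint of $\widehat P$, in both cases by an edge of length $2$. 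The resulting path runs from $0$ (the free end of $C_0$) up through $C_0$, across $\widehat P$, and back down $C_1$ to $1$ (its free end).

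It then remains to check that this is an $S_2(L\cup\{2^b\})$. The vertex set is $\{0,\dots,v+b-1\}$ and the endpoints are $0$ and $1$, as required. The edges fall into two groups: those of $\widehat P$, contributing exactly the list $L$, and the newly added edges (inside $C_0$, inside $C_1$, and the two gluing edges), all of length $2$; a short count shows there are exactly $b$ of the latter, independently of the parity of $b$. Hence the edge-length list is $L\cup\{2^b\}$, and the path is a special linear realization of type $2$.

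The only point requiring genuine care — and the one I would write out explicitly — is the parity bookkeeping of the second paragraph: verifying that the even chain always meets the even endpoint of $\widehat P$ and the odd chain the odd endpoint, so that every connecting edge has length exactly $2$ and their total number is exactly $b$. This splits into the two cases $b$ even and $b$ odd, which determine which of $b,b+1$ plays the role of the even endpoint; once this is pinned down, the verification of the length list and of the Hamiltonicity is entirely routine.
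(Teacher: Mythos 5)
Your construction is correct, and it is essentially the paper's own proof in unrolled form: the paper adds the $2$'s iteratively (translate by $2$, prepend $0$, append $1$ for each pair, with a separate translate-by-$3$ move to handle odd $b$), and composing those moves produces exactly your path --- the even chain $[0,2,4,\dots]$ rising to the translated copy of $S_2L$ and the odd chain $[\dots,3,1]$ descending from it. The only difference is presentational: you give the final object in one shot with explicit parity bookkeeping, whereas the paper reaches it by iterating two small extensions.
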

\begin{proof}
Let $S_2L=[0,x_1,x_2,\ldots,x_n,1]$ and consider
$S_2L'=[0,2,x_1+2,x_2+2,\ldots,x_n$ $+2,3,1]$,
obtained from $S_2L$ by prepending $0$ and appending $1$ to the translate of $S_2L$ by $2$.
It is easy to see that $S_2L'$ is a special linear realization of type $2$ of
$L'=L\cup\{2^2\}$.
So, if $b$ is even, in order to obtain a $S_2(L\cup\{2^b\})$ it is sufficient
to repeat the process $\frac{b}{2}$ times.
Now consider $S_2L''=[0,2,4,x_n+3,\ldots,x_2+3,x_1+3,3,1]$,
obtainable via the following steps: translate $S_2L$ by $3$, prepend $1$, append $2,0$ and reverse.
It is not hard to see that
$S_2L''$ is a special linear realization of type $2$ of $L''=L\cup\{2^3\}$.
Hence, if $b$ is odd, say $b=2k+3$, in order to obtain a $S_2(L\cup\{2^b\})$ it is
sufficient
to apply $k$ times the first process and once the second process.
\end{proof}

\begin{ex}
Starting from $S_2\{1,2^6,8^2\}=[ 0, 8, 6, 4, 2, 3, 5, 7, 9, 1 ]$
we can obtain, for example,  $S_2\{1, 2^8,8^2\}=[ 0, 2, 10, 8, 6, 4, 5, 7, 9, 11, 3, 1 ]$
and $S_2\{1,2^9,8^2\}=[ 0, 2, 4, 12, 10, 8, 6,$ $ 5, 7, 9, 11, 3, 1 ]$.
\end{ex}

\begin{lem}\label{special1e2}
If there exists a special linear realization of type $1$ of a list $L_1$
and a special linear realization of type $2$ of a list $L_2$,
then there exists a linear realization of $L_1\cup L_2$.
\end{lem}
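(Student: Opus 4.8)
The plan is to assemble the required path on the label set $\{0,1,\dots,|L_1|+|L_2|\}$ out of the two given realizations, glued along the two structural features that the hypotheses are designed to provide: the edge of length $1$ joining the two largest vertices $|L_1|$ and $|L_1|-1$ in $S_1L_1$, and the pair of endpoints $0,1$ of $S_2L_2$, which are two vertices at mutual distance $1$. Since a linear realization of $L_1\cup L_2$ needs exactly $|L_2|$ vertices beyond those of $S_1L_1$, the labels $|L_1|+1,\dots,|L_1|+|L_2|$ are the ``new'' vertices onto which the copy of $L_2$ must be placed. First I would relocate $S_2L_2$ onto these top labels by a translation (which preserves every edge-length, so the translated copy still realizes $L_2$), so that its two endpoints, differing by $1$, become two consecutive labels just above $|L_1|$; the idea is that these should match the distance-$1$ pair $|L_1|-1,\,|L_1|$ of the type-$1$ adjacency.

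The core construction is then to join $S_1L_1$ and the relocated $S_2L_2$ at this seam, orienting the inserted block so that its ends land on $|L_1|-1$ and $|L_1|$ and verifying, by tracing the resulting sequence, that every label of $\{0,\dots,|L_1|+|L_2|\}$ is visited exactly once and that the multiset of edge-lengths is exactly $L_1\cup L_2$. In the bookkeeping, the length-$1$ must come from the retained adjacency of $S_1L_1$, while the remaining lengths split as (the rest of) $L_1$ from the body of $S_1L_1$ and $L_2$ from the inserted block; both hypotheses are used in an essential way here, since it is precisely the equality of the two ``gaps'' (endpoints of $S_2L_2$ at distance $1$, top vertices of $S_1L_1$ at distance $1$) that allows the copies to be fitted together with the connecting edges forced by the labels and with no spurious length created.

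The step I expect to be the main obstacle is exactly this junction bookkeeping. The naive move---simply replacing the edge $\{|L_1|-1,|L_1|\}$ by the translated copy of $S_2L_2$---does \emph{not} work: it removes that length-$1$ edge and produces a realization of $(L_1\setminus\{1\})\cup L_2$, one length-$1$ short of the target, and moreover $S_1L_1$ need not have $|L_1|$ or $|L_1|-1$ as an endpoint (so a bare concatenation is unavailable). The delicate point is therefore to arrange the insertion so that the length-$1$ of the adjacency is \emph{kept}, not absorbed, and so that the single connecting edge matches the distance dictated by the labels; this is the place where the type-$1$ structure (a length-$1$ edge at the very top that anchors the new labels) and the type-$2$ structure (endpoints at mutual distance $1$ that fit that anchor) are needed simultaneously, as in the insertion mechanism already exploited in Lemmas~\ref{special1} and~\ref{special2}. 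Once the seam is set up to respect these constraints, checking that the output is a genuine Hamiltonian path realizing $L_1\cup L_2$ is a routine verification.
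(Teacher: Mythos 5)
Your proposal is incomplete at precisely the point where a proof is needed. You set up the right frame (translate $S_2L_2$ so that it occupies the labels above $|L_1|$ and splice it into $S_1L_1$ at the length-one edge joining $|L_1|-1$ and $|L_1|$), and you correctly observe that the naive splice only realizes $(L_1\setminus\{1\})\cup L_2$; but you then merely postulate that the insertion can be rearranged ``so that the length-$1$ is kept,'' and you never exhibit such an arrangement. This is not a routine verification; it is in fact obstructed. Set $v=|L_1|+1$. If the new path is to contain all $|L_1|$ edges of $S_1L_1$ (kept in place on $\{0,\dots,v-1\}$) together with all $|L_2|$ edges of a translated copy of $S_2L_2$, then counting the $|L_1|+|L_2|+1$ required vertices forces the copy to meet $\{0,\dots,v-1\}$ in exactly one vertex, hence to be the translate by $v-1$; the shared vertex $v-1$ is an endpoint of the copy, so it gets degree $1+\deg_{S_1L_1}(v-1)$ in the union, and therefore $v-1$ must be an end-vertex of $S_1L_1$. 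The type-$1$ hypothesis does not provide this: in $S_1\{1,2^3\}=[0,2,4,3,1]$ both $4$ and $3$ are internal vertices. So the seam you ask for cannot be produced by the method you describe; one needs either a stronger hypothesis (e.g.\ that $|L_1|$ be an endpoint of $S_1L_1$, in which case plain concatenation with the translate by $|L_1|$ of any realization of $L_2$ already works, and no type-$2$ assumption is needed) or a weaker conclusion.

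The striking point, however, is that the ``naive move'' you rightly reject is exactly the paper's own proof. The paper takes $S_1L_1=[0,y_1,\dots,y_r,v-2,v-1,z_1,\dots,z_s]$ and $S_2L_2=[0,x_1,\dots,x_n,1]$, and forms $rL'=[0,y_1,\dots,y_r,v-2,x_1+v-2,\dots,x_n+v-2,v-1,z_1,\dots,z_s]$: the edge $[v-2,v-1]$ is deleted and replaced by the translate of $S_2L_2$ by $v-2$. The resulting path is a Hamiltonian path of $K_{|L_1|+|L_2|}$, not of $K_{|L_1|+|L_2|+1}$, and its multiset of edge-lengths is $(L_1\setminus\{1\})\cup L_2$: the inserted edges reproduce $L_2$, while one length $1$ of $L_1$ is destroyed. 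Concretely, with $S_1\{1^2,2^2\}=[0,1,3,4,2]$ and $S_2\{1,2^2\}=[0,2,3,1]$ the recipe yields $[0,1,3,5,6,4,2]$, which realizes $\{1^2,2^4\}$, whereas $L_1\cup L_2=\{1^3,2^4\}$ requires a Hamiltonian path of $K_8$. So the paper's argument proves only the weaker statement that $(L_1\setminus\{1\})\cup L_2$ admits a linear realization, and Lemma~\ref{special1e2} as stated is not established by it (the statement itself may still be true, but a different proof, or a corrected statement, is needed). This flaw does not propagate, since the lemma is never invoked elsewhere in the paper. In short: your proposal does not prove the lemma, but the obstacle you identified is real, and it is a genuine defect of the paper's own proof rather than of your framing alone.
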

\begin{proof}
Set $v=|L_1|+1$.
Let $S_1L_1=[0,y_1,\ldots,y_r,v-2,v-1,z_1,\ldots,z_s]$ and
$S_2L_2=[0,x_1,x_2,\ldots,x_n,1]$.
Now consider
$rL'=[0,y_1,\ldots,y_r,v-2,x_1+v-2,x_2+v-2,\ldots,x_n+v-2,v-1,z_1,\ldots,z_s]$.
One can easily check that $rL'$ is a linear realization of $L'=L_1\cup L_2$.\\
If $S_1L_1=[0,y_1,\ldots,y_r,v-1,v-2,z_1,\ldots,z_s]$ the thesis can be obtained in a
similar way.
\end{proof}

\section{Construction of linear realizations of $\{1, 2^b,t^c\}$} \label{3}

First of all we make some remarks which will be very useful in the following.
Since the case in which the list has exactly two elements has been completely solved in \cite{DJ}
and \cite{HR}, we consider through all the paper  $\{1^a, 2^b,t^c\}$ with
$t\geq 4$ an even integer and $a,b,c\geq1$.

\begin{rem}\label{RemA>a}
If there exists a linear realization $rL=[0,x_1,x_2,\ldots,x_s]$ of $L=\{1^a,2^b,t^c\}$,
then we have a linear realization of $L'=\{1^{a+A},2^b,t^c\}$ for any $A\geq 0$. In fact,
it suffices to consider $rL'=[0,1,2,\ldots,A,x_1+A,x_2+A,\ldots,x_s+A]$.
\end{rem}

\begin{lem}
If a list $L=\{1^{a_1},2^{a_2}, \ldots, t^{a_t}\}$ admits a linear realization $rL$, then $a_i+i-1\leq |L|$ for all $i=1,\ldots,t$.
\end{lem}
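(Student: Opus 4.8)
The plan is to prove this by a simple counting argument: for each fixed index $i$ I would count how many edges of length exactly $i$ are available in $K_v$, and then observe that a Hamiltonian path can traverse each of them at most once. Set $v=|L|+1$, so that $rL$ is a Hamiltonian path on the vertex set $\{0,1,\ldots,v-1\}$ and, by the definition of linear realization, the length of an edge $[x,y]$ is the absolute difference $|x-y|$.

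First I would determine the total number of edges of length $i$ present in $K_v$. Such an edge is an unordered pair $\{x,x+i\}$ subject to $0\leq x$ and $x+i\leq v-1$; hence $x$ ranges over the integers $0,1,\ldots,v-1-i$, giving exactly $v-i$ distinct edges of length $i$. Next I would use the fact that $rL$ is a Hamiltonian path of the \emph{simple} graph $K_v$: since all its vertices are distinct, every unordered pair occurs consecutively at most once, so the edges of $rL$ are pairwise distinct. Consequently the number $a_i$ of edges of length $i$ occurring in $rL$ cannot exceed the total number $v-i$ of edges of length $i$ available in $K_v$, that is, $a_i\leq v-i$. Substituting $v=|L|+1$ yields $a_i\leq |L|+1-i$, which is precisely $a_i+i-1\leq |L|$ for every $i=1,\ldots,t$.

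There is no substantial obstacle in this argument; it is essentially a pigeonhole count. The only points requiring care are the endpoint bookkeeping in the constraint $0\leq x\leq v-1-i$ (so as not to commit an off-by-one error in the count $v-i$), and the justification that the multiplicities in the multiset $L$ cannot exceed the number of available edges—this is guaranteed exactly because the distinctness of the vertices of a Hamiltonian path forces its edges to be distinct, so no edge of $K_v$ contributes more than once to $L$.
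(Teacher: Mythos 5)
Your proof is correct, but it takes a genuinely different route from the paper's. You bound $a_i$ directly by counting the supply of edges: in the linear setting an edge of length $i$ is a pair $\{x,x+i\}$ with $0\leq x\leq v-1-i$, so $K_v$ contains exactly $v-i$ such edges, and since a Hamiltonian path has distinct vertices it never repeats an edge, whence $a_i\leq v-i=|L|+1-i$. The paper instead argues by contradiction via connectivity and residue classes: if $a_j+j-1>|L|$, then the path contains fewer than $j-1$ edges of length different from $j$; deleting them leaves at most $j-1$ connected components, and each component, being joined only by edges of length $j$, has all its vertices in a single congruence class modulo $j$, so the vertex set $\{0,1,\ldots,v-1\}$ would meet at most $j-1$ of the $j$ residue classes modulo $j$ --- impossible since $v\geq j$. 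Your counting argument is shorter and more elementary, but it leans on the linear length being an absolute difference: it gives nothing useful for cyclic realizations, where every length $i<v/2$ is carried by $v$ distinct edges of $K_v$ and the resulting bound is vacuous. The paper's residue-class argument is the same style of reasoning that underlies the necessity of the divisor condition in the Horak--Rosa conjecture, and it transfers to such modular settings. Both arguments yield exactly the same inequality here, and yours is complete as written; the only implicit point, namely $v-i\geq 0$, is guaranteed because a linear realization only involves lengths at most $v-1$.
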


\begin{proof}
By way of contradiction, suppose that there exists an integer $j$ such that $a_j+j -1 > |L|$. This means that $\sum_{i\neq j} a_i = |L|-a_j < j-1$. Remove from the path $rL$ the edges of length $i$ with $i\neq j$. In such a way we obtain at most  $ j-1$ connected components whose vertices belong to the same congruence class modulo $j$. Hence, we have a partition of the $v=|L|+1$ elements of $\Z_{v}$  in at most $j-1$ congruence classes modulo $j$, which is clearly an absurd.
\end{proof}

 \begin{rem}\label{a+b+c}
In view of previous lemma, since we are looking for \emph{linear} realizations of $L=\{1^a,2^b,t^c\}$, in the following we will suppose
$a+b\geq t-1$.
 \end{rem}

\begin{nota}
In order to present our realizations in a short way we will always use the notation
here explained.
In every realization of a list $L=\{1^a,2^b,t^c\}$ the symbol  ``$x\stackrel{+ i}{\rightharpoondown}y$'' means the arithmetic progression $x,x+i,
x+2i,\ldots,
y-i,y$, where $x<y$ are congruent modulo $i$.
Analogous meaning for the symbol
 ``$x\stackrel{-i}{\rightharpoondown}y$'', when $x > y$.
If $i=t$, for short, by ``$x\rightarrow y$'' we will mean the arithmetic progression $x,x+t,
x+2t,\ldots,
y-t,y$
or the arithmetic progression $x,x-t, x-2t, \ldots, y+t,y$ according to whether $y > x$ or
$y
< x$,
respectively.
\end{nota}

In view of Remark \ref{RemA>a} it is natural to start investigating the case $a=1$,
namely we construct linear realizations of $\{1, 2^b,t^c\}$. In order to reduce the
number of realizations we need to describe, we introduce a particular class of linear
realizations and two related lemmas.

\begin{defi}
Given $L=\{1^a, 2^b, t^c\}$, set $v=|L|+1$, $e_1=[v-t,v-t+2]$ and
$e_2=[v-t,v-t+1]$. A linear realization $rL$ is $t^{4w}$-\emph{extendable}
if, for $i=1$ or $i=2$, both $e_i+4x$ and $e_i+4x+i$ are edges of $rL$ for any $x=0,\ldots,w$.
\end{defi}

\begin{lem}\label{c,c+4w}
If there exists a $t^{4w}$-extendable linear realization $rL$ of $L=\{1^a, 2^b, t^c\}$, then
there exists a linear realization  of $L'=\{1^a, 2^b, t^{c+4x+4}\}$ for any
$x=0,\ldots,w$. Furthermore, if $rL$ is special of type $2$, also $rL'$ is special of type $2$.
\end{lem}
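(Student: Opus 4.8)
The plan is to enlarge the given realization by local surgery: I will replace certain short edges of $rL$ by detours that run out to a block of brand-new vertices and back, each detour contributing exactly two edges of length $t$ while recycling one edge of length $1$ (or $2$). The whole point of $t^{4w}$-extendability is that it hands me, for the chosen $i\in\{1,2\}$ and every level $j=0,\dots,w$, a pair of edges $e_i+4j$ and $e_i+4j+i$ sitting in prescribed positions. For a target $x\le w$ I will only touch the $x+1$ pairs with $j=0,\dots,x$, and the new vertices I introduce will be exactly $v,v+1,\dots,v+4x+3$, so that $v'=|L'|+1=v+4x+4$ comes out correctly.

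Concretely, fix $j$ and write $p=v-t+4j$, so that $p+t,\dots,p+t+3$ are four of the new vertices (they are $\geq v$, and pairwise distinct as $j$ varies since they sweep out $\{v,\dots,v+4x+3\}$). In the case $i=2$ the two given edges are $[p,p+1]$ and $[p+2,p+3]$, both of length $1$. I replace $[p,p+1]$ by the sub-path $p,\,p+t,\,p+t+1,\,p+1$ and $[p+2,p+3]$ by $p+2,\,p+t+2,\,p+t+3,\,p+3$. Each replacement deletes one length-$1$ edge and inserts two edges of length $t$ together with one new length-$1$ edge joining consecutive new vertices, so the number of $1$'s is unchanged and the number of $t$'s grows by $2$. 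The case $i=1$ is identical after swapping the roles of length $1$ and length $2$: the given edges $[p,p+2]$ and $[p+1,p+3]$ are rerouted through $p,\,p+t,\,p+t+2,\,p+2$ and $p+1,\,p+t+1,\,p+t+3,\,p+3$, again adding two $t$'s each while preserving the count of $2$'s. Summing over $j=0,\dots,x$ adjoins $4(x+1)=4x+4$ edges of length $t$ and leaves $a$ and $b$ untouched, giving precisely $L'=\{1^a,2^b,t^{c+4x+4}\}$.

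It then remains to verify that the modified object is genuinely a Hamiltonian path on $\{0,\dots,v+4x+3\}$. Since $[p,p+1]$ being an edge of $rL$ means $p$ and $p+1$ are consecutive along the path, each surgery merely inserts two new vertices in the interior of an existing edge, keeping the object connected and acyclic; because the blocks $\{p,\dots,p+3\}$ are disjoint for distinct $j$ and the inserted vertices $p+t,\dots,p+t+3$ all lie $\geq v$, no vertex is reused and the detours do not interfere. I expect the main point to get right to be exactly this bookkeeping — that the edges supplied by extendability are pairwise distinct, that the new vertices exhaust $\{v,\dots,v+4x+3\}$, and that each detour rebalances the deleted short edge with a fresh one — rather than any delicate combinatorics. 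Finally, the hypothesis $a+b\geq t-1$ together with $c\geq1$ forces $v=a+b+c+1\geq t+1$, so every $p=v-t+4j\geq 1$ and no surgery disturbs the vertex $0$; and since each detour is inserted strictly inside an edge, the two endpoints of $rL$ survive as the endpoints of $rL'$. Hence if $rL$ is special of type $2$, with endpoints $0$ and $1$, then so is $rL'$.
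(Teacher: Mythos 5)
Your proposal is correct and is essentially the paper's own proof: the detours $[p,p+t,p+t+1,p+1]$, $[p+2,p+t+2,p+t+3,p+3]$ (for $i=2$) and $[p,p+t,p+t+2,p+2]$, $[p+1,p+t+1,p+t+3,p+3]$ (for $i=1$) are exactly the replacements used there. The only cosmetic difference is that the paper performs the surgery iteratively, four new vertices at a time, whereas you carry out all $x+1$ disjoint surgeries simultaneously; the resulting realization is the same.
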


\begin{proof}
If $e_1$ and $e_1+1$ are edges of
$rL$, we replace  $e_1$ with $[v-t,v,v+2,v-t+2]$
and $e_1+1$ with $[v-t+1,v+1,v+3,v-t+3]$. If $e_2$ and $e_2+2$
are edges of
$rL$, we replace $e_2$ with $[v-t,v,v+1,v-t+1]$
and $e_2+2$ with $[v-t+2,v+2,v+3,v-t+3]$.
In this way, we obtain a linear realization of $\{1^a, 2^b, t^{c+4}\}$.
Now, we reapply this process to the edges $e_1+4$ and $e_1+5$ (or to
$e_2+4$ and $e_2+6$, respectively), obtaining a linear realization of $\{1^a,
2^b, t^{c+8}\}$.
Applying this process $x+1$ times (with $0\leq x\leq w$)
we obtain a linear realization of $\{1^a, 2^b,
t^{c+4x+4}\}$.
\end{proof}

\begin{ex}
It is easy to see that $S_1L=[0\fr{+2} 14,15,1\fr{+2} 13]$
is a $\ext{8}$ linear realization of $L=\{1,2^{13},14\}$ (consider the edges
$[2,4]$ and $[3,5]$; $[6,8]$ and $[7,9]$;  $[10,12]$ and $[11,13]$).
Applying once the construction illustrated in the proof of
Lemma $\ref{c,c+4w}$,
we obtain
$rL'= [0, 2, \mathbf{16,18}, 4\fr{+2} 14,15,1,3,\mathbf{17,19}, 5 \fr{+2} 13]$
which is a realization of $L'=\{1,2^{13},14^{5}\}$.
For example, if we are looking for a linear realization of $L''=\{1,2^{13},14^{9}\}$
we have to apply the process to $rL'$:
$rL''= [0, 2, 16,18, 4, 6,\mathbf{20,22}, 8\fr{+2} 14,15,1,3, 17,19, 5,
7,\mathbf{21, 23}, 9 \fr{+2} 13]$. Finally, a linear realization of the list
$L'''=\{1,2^{13},14^{13}\}$ is $rL'''= [0, 2, 16,  18, 4, 6, 20,22, 8, 10,
\mathbf{24},$ $\mathbf{26},12, 14,15,1,3, 17, 19, 5,
7, 21,$ $ 23, 9,11, \mathbf{25,27}, 13]$.
\end{ex}

\begin{lem}\label{senzak}
Let $t$ be even and suppose that $rL$ is a $\ext{4w}$ linear realization of a list
$L=\{1^a,2^b,t^c\}$  for $w=\lfloor\frac{t}{4}\rfloor-1$.
Then, we have the following:
\begin{itemize}
\item[\rm{(i)}] If $t\equiv 0\pmod 4$, then for any $k\geq1$ there exists a $\ext{4w}$
linear realization of $L'=\{1^a,2^b,t^{tk+c}\}$.
\item[\rm{(ii)}] If $t \equiv 2\pmod 4$ and $rL$ is special of type $1$, then for any $k\geq 1$ there exists
a $\ext{4w}$ special linear realization of $L'=\{1^a,2^b,t^{tk+c}\}$ of type $1$.
\item[\rm{(iii)}] If $rL$ is special of type $1$, then for any $y\geq 1$ there
exists a $\ext{4w}$ special realization of
$L'=\{1^a, 2^{b+4y}, t^c\}$ of type $1$.
\end{itemize}
\end{lem}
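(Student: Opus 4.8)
The plan is to prove all three parts by the same device: a \emph{local surgery} at the top of the realization---around the two largest vertices $v-1=|L|$ and $v-2$ and the newly inserted vertices---after which I would check that the configuration required for being $\ext{4w}$ (and, in (ii)--(iii), the type-$1$ adjacency) reappears one block higher, so that the surgery can be iterated. Part~(i) is the cleanest instance. Since $t\equiv 0\pmod 4$ gives $w+1=t/4$, applying the construction in the proof of Lemma~\ref{c,c+4w} with the maximal index $x=w$ raises the number of length-$t$ edges by exactly $4(w+1)=t$ and inserts the block $v,\dots,v+t-1$. I would then verify that this block reproduces, with respect to the new maximum $v'=v+t$ and the same $i$, precisely the edges $e_i'+4x$ and $e_i'+4x+i$ for $x=0,\dots,w$: the pairs $[v+4x,v+4x+2]$, $[v+4x+1,v+4x+3]$ when $i=1$, and $[v+4x,v+4x+1]$, $[v+4x+2,v+4x+3]$ when $i=2$. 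Hence the output is again $\ext{4w}$, and iterating $k$ times yields $\{1^a,2^b,t^{tk+c}\}$.

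For (ii) the obstruction is arithmetic: $t\equiv 2\pmod 4$ forces $w+1=(t-2)/4$, so the maximal extension adds only $t-2$ length-$t$ edges, two short of $t$. Here I would use that $rL$ is special of type $1$. Since $4w=t-6$, the ladder reaches only vertex $v-3$, so $v-2$ and $v-1$ lie above it and the edge $[v-2,v-1]$ is free to be rerouted as $[v-2,v+t-2,v+t-1,v-1]$. This creates the two length-$t$ edges $[v-2,v+t-2]$ and $[v+t-1,v-1]$ together with the length-$1$ edge $[v+t-2,v+t-1]$, so the number of $1$'s is preserved, and it makes $v+t-2,v+t-1$ the new adjacent top pair, restoring type $1$. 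Combined with the maximal extension, each round then adds exactly $t$ length-$t$ edges and returns a type-$1$, $\ext{4w}$ realization, which I iterate $k$ times.

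For (iii) I would reroute the same type-$1$ edge through four new vertices, replacing $[v-2,v-1]$ by $[v-2,v,v+2,v+3,v+1,v-1]$. This produces the four length-$2$ edges $[v-2,v]$, $[v,v+2]$, $[v+3,v+1]$, $[v+1,v-1]$ and a single length-$1$ edge $[v+2,v+3]$, again keeping the number of $1$'s fixed, with $v+2,v+3$ as the new adjacent top pair. Because the inserted block has size $4$, the period of the ladder, the requirement realigns with respect to $v'=v+4$ when the ladder consists of edges of length $2$ (the case $i=1$): one fresh rung appears at the top, while the lowest old rung drops out of the requirement. Iterating $y$ times gives $\{1^a,2^{b+4y},t^c\}$.

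The delicate part throughout is the extendability bookkeeping: rung by rung one must confirm that every edge $e_i'+4x$, $e_i'+4x+i$ forced by the new maximum is actually present, for both admissible $i\in\{1,2\}$, while also checking that the inserted vertices are new and that no spurious length is introduced. I expect the genuine obstacle to be the case $i=2$ of (iii): adding edges of length $2$ does not automatically refresh a ladder built from edges of length $1$, so its new top rung (an edge of length $1$) has to be created while the number of $1$'s stays fixed, which seems to force a slightly less local surgery that simultaneously deletes a now-redundant low rung near the vertex $v-t$.
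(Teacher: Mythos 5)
Your construction coincides, in all three parts, with the paper's own proof: (i) is the maximal application of Lemma \ref{c,c+4w} (taking $x=w$) together with the observation that the fresh rungs reappear one block higher; (ii) supplements this, for $t\equiv 2\pmod 4$, with the rerouting of the type-$1$ edge $[v-2,v-1]$ through the two new top vertices as $[v-2,v+t-2,v+t-1,v-1]$; and (iii) is exactly the replacement of $[v-2,v-1]$ by $[v-2,v,v+2,v+3,v+1,v-1]$, which the paper produces by four applications of Lemma \ref{special1}, followed by the same realignment check of the rungs against the new maximum $v'=v+4$.

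The one case you leave open---part (iii) when the ladder consists of length-$1$ edges, i.e.\ the case $i=2$---is precisely the case the paper disposes of with its closing sentence (``\ldots the statements can be proved in a similar way''), and your suspicion that this is not routine is well founded. If $t\equiv 0\pmod 4$, the top length-$1$ rung of $rL$ is the pair $[v-4,v-3]$, $[v-2,v-1]$, so the edge being rerouted is itself half of a rung; after the surgery that pair is still required (it is the rung at level $w-1$ with respect to $v'=v+4$), but $[v-2,v-1]$ has been destroyed and cannot be recreated: the inserted path is the \emph{unique} path from $v-2$ to $v-1$ through $v,v+1,v+2,v+3$ using four edges of length $2$ and one of length $1$, and it contains no edge with both endpoints among the old vertices. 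Hence, unless $rL$ happens also to contain $[v-4,v-2]$ and $[v-3,v-1]$, neither your argument nor the paper's establishes (iii) for $i=2$. This gap is harmless for the paper's applications, since every realization to which part (iii) is applied (in Propositions \ref{t4a1} and \ref{t2a1}) has its rungs made of length-$2$ edges; but as a statement about arbitrary $t^{4w}$-extendable realizations, the lemma is proved only in the $i=1$ case---in the paper as in your proposal. Making that limitation explicit is to your credit; in every other respect your proof is the paper's.
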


\begin{proof}
To prove (i) and (ii), clearly it suffices to consider the case $L'=\{1^a, 2^{b},t^{t+c}\}$.
Let $t=4u$ or $4u+2$, so $w=u-1$. Since $rL$ is $\ext{4w}$, the
edges $[v-t,v-t+2], [v-t+1,v-t+3],\ldots,[v-t+4u-4,v-t+4u-2],[v-t+4u-3,v-t+4u-1]$
are edges of $rL$, where
$v=|L|+1$. Applying the process described in Lemma \ref{c,c+4w} $w$ times, we obtain a
linear realization of $L''=\{1^a,2^{b},t^{c+4u}\}$, which contains the following subpaths:
$[v-t,v, v+2, v-t+2], [v-t+1, v+1, v+3,v-t+3],\ldots,[v-t+4u-4,v+4u-4, v+4u-2,
v-t+4u-2], [v-t+4u-3,v+4u-3, v+4u-1, v-t+4u-1]$. Thus, 
$[v+4x,v+2+4x]$ and $[v+1+4x,v+3+4x]$ are edges of $rL''$ for any $x=0,\ldots,w$. Hence,
$rL''$ is $\ext{4w}$.\\
If $t=4u$, taking $L'=L''$, we prove (i).
If $t=4u+2$, replace in $rL''$
either
$[v-2,v-1]$ by
$[v-2, v+t-2, v+t-1, v-1]$ or $[v-1, v-2]$ by $[v-1, v+t-1, v+t-2, v-2]$ (this is
possible, since we are assuming that $rL$ is special of type $1$). In this way
we obtain a special linear realization of type $1$ of $L'=\{1^a,2^{b},t^{c+4u+2}\}$, proving
(ii).

To prove (iii), clearly it suffices to consider the case $L'=\{1^a, 2^{b+4},$
$t^c\}$.
Assume $S_1L=[0,y_1,\ldots,y_r, v-2, v-1,z_1,\ldots,z_s]$, where again $v=|L|+1$, and
let $S_1L'$ be the realization obtained from $S_1L$ following
the proof of Lemma \ref{special1}, namely
$S_1L'=[0,y_1,\ldots,y_r,v-2,v,v+2,v+3,v+1,v-1,z_1,\ldots,z_s]$; clearly $|L'|=v+3$.
Since $S_1L$ is $\ext{4u-4}$, the edges $[v-t,v-t+2], [v-t+1,v-t+3], \ldots,[v-t+4(u-1),v-t+2+4(u-1)], [v-t+1+4(u-1),v-t+3+4(u-1)] $
are edges of $S_1L$.
So $S_1L'$ contains the edges $[v+4-t,v+4-t+2],[v+4-t+1,v+4-t+3], \ldots,[v-t+4(u-1),v-t+2+4(u-1)], [v-t+1+4(u-1),v-t+3+4(u-1)]
$.
If $t=4u$, also the edges $[ v,v+2]$ and $[v+1,v+3]$ appears in $S_1L'$; while
if $t=4u+2$ the
edges $[v-2,v]$ and $[v-1,v+1]$ are edges of $S_1L'$. So, also $S_1L'$ is $\ext{4u-4}$.

If the edges of $rL$ are $[v-t+4x,v-t+1+4x]$ and $[v-t+2+4x,v-t+3+4x]$, the
statements can be proved in a  similar way.
\end{proof}

Now we construct linear realizations of $L=\{1,2^b,t^c\}$.
Writing $c=tk+d$ with $k\geq 0$ and $0\leq d<t$, we distinguish two cases, according to
the parity of $d$.

\begin{prop}\label{c even}
Let $t\geq 4$ and $d\geq 0$ be even integers.
The list $\{1, 2^b, t^{tk+d}\}$ admits a special linear realization
of type $2$
 for any $b\geq t-1$ if $d\equiv 2 \pmod4$ and $t\equiv2\pmod4$;
 for any $b\geq t-2$ otherwise.
\end{prop}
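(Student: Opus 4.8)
The plan is to reduce the whole two‑parameter family — the exponent $b$ of $2$ and the exponent $c=tk+d$ of $t$ — to a finite list of explicitly built base cases, using the three extension lemmas of this section, and then to exhibit those base cases. Throughout set $w=\lfloor t/4\rfloor-1$ and aim, for each admissible even $d$, at a realization that is simultaneously special of type $2$ \emph{and} $t^{4w}$-extendable, with the smallest possible number of $2$'s. The $b$-direction is then free: by Lemma~\ref{special2}, from a special type-$2$ realization of $\{1,2^{b_0},t^{c}\}$ one gets special type-$2$ realizations of $\{1,2^{b_0+\beta},t^{c}\}$ for every $\beta\ge 2$, so two base realizations, with $b=b_0$ and $b=b_0+1$, already yield all $b\ge b_0$. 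Here $b_0=t-1$ in the exceptional subcase and $b_0=t-2$ otherwise, the latter being forced to be minimal by the necessary condition recalled in Remark~\ref{a+b+c} (with $a=1$ it reads $b\ge t-2$).

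Next I would dispose of the exponent $c=tk+d$. Starting from a $t^{4w}$-extendable special type-$2$ realization of $\{1,2^{b_0},t^{d}\}$, Lemma~\ref{c,c+4w} raises the exponent of $t$ by any amount in $\{4,8,\dots,4(w+1)\}$ while preserving type $2$, and — exactly as in the proof of Lemma~\ref{senzak}(i) — a full block raise of $4(w+1)=4\lfloor t/4\rfloor$ returns a realization that is again $t^{4w}$-extendable. Iterating the block raise and then applying one partial raise shows that from the base exponent $d$ one reaches \emph{every} exponent $\equiv d\pmod 4$ that is at least $d$, still as a special type-$2$ realization. The crucial observation for $t\equiv 2\pmod 4$ is that this already suffices, so the type-$1$ move of Lemma~\ref{senzak}(ii), which would destroy type $2$, can be avoided entirely: since $tk+d\equiv d+2k\pmod 4$, the target exponents for a fixed even $d$ split into the two even residue classes $d$ and $d+2$ modulo $4$, so I only need base realizations at residues $d$ and $d\pm 2$ to capture all $k\ge 0$. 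For $t\equiv 0\pmod 4$ one has $tk+d\equiv d\pmod 4$ for all $k$, so a single base residue $d$ suffices (this is Lemma~\ref{senzak}(i), whose underlying process again preserves type $2$).

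At this point everything reduces to the base cases: for every even $d$ with $0\le d<t$ I must construct a $t^{4w}$-extendable special type-$2$ realization of $\{1,2^{b_0},t^{d}\}$, and likewise of $\{1,2^{b_0+1},t^{d}\}$, with $b_0$ equal to the stated threshold. The natural template, suggested by the examples of this section such as $[0\fr{+2}14,15,1\fr{+2}13]$, is to run through the even vertices in steps of $+2$, cross to the odd vertices through the single edge of length $1$, and run back through the odd vertices, inserting the $d$ edges of length $t$ as $x\rightarrow x+t$ jumps placed so that all the consecutive pairs $[v-t+4x,v-t+2+4x]$ and $[v-t+1+4x,v-t+3+4x]$ demanded by $t^{4w}$-extendability occur, while keeping the endpoints $0$ and $1$ fixed to guarantee type $2$.

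The hard part will be these explicit base constructions. The delicate point is to verify, uniformly in $d$, that the splicing of the even and odd runs with the $t$-jumps closes up into a single Hamiltonian path on $\{0,\dots,v-1\}$ with exactly one length-$1$ edge, no repeated vertex, and the required extendable structure, all while spending as few length-$2$ edges as possible. The minimum number of length-$2$ edges needed to connect the runs and to supply the extendability pairs is precisely what pins $b_0=t-2$. I expect the tightest instance to be $t\equiv 2\pmod 4$ together with $d\equiv 2\pmod 4$, where a parity and counting mismatch between the even and odd vertex runs cannot be absorbed with only $t-2$ edges of length $2$, so one extra length-$2$ edge is unavoidable and the threshold rises to $b_0=t-1$; making this obstruction precise, and checking that it is the \emph{only} exceptional case, is the crux of the argument.
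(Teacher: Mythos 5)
Your handling of the $b$-direction (Lemma \ref{special2}) and your plan of climbing the exponent of $t$ from extendable type-2 bases coincide with the paper's strategy for $t \equiv 0 \pmod 4$; the gap is in the step on which your entire treatment of $t \equiv 2 \pmod 4$ rests. A ``block raise'' by $4(w+1)=4\lfloor t/4\rfloor = t-2$ does \emph{not} return a $t^{4w}$-extendable realization, and the proof of Lemma \ref{senzak}(i) does not show that it does. Extendability is anchored at the vertex $v-t$ with $v=|L|+1$, so after a raise by $r$ the required length-2 edges must sit at positions $v+r-t+4x$, $x=0,\ldots,w$, while the edges created by the raising process sit at positions $v+4x$ and $v+1+4x$; these agree modulo $4$ only when $r\equiv t\pmod 4$. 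For $t\equiv 0\pmod 4$ the block raise $r=t$ is a multiple of $4$ and everything aligns (that is exactly Lemma \ref{senzak}(i)); for $t\equiv 2\pmod 4$ no raise by a multiple of $4$ can realign the window, so your iteration dies after a single block: from a base exponent $e$ you reach only $e+4,\ldots,e+t-2$, not ``every exponent $\equiv e\pmod 4$ at least $e$.'' This is precisely why the paper requires its bases for $t\equiv 2\pmod 4$ to be special of type $1$ as well and invokes Lemma \ref{senzak}(ii): replacing the interior edge $[v-2,v-1]$ by $[v-2,v+t-2,v+t-1,v-1]$ supplies the two extra $t$-edges that make the total raise equal to $t\equiv 2\pmod 4$ and restore extendability. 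Your stated reason for avoiding that move is also mistaken: it does not destroy type $2$, since the endpoints $0$ and $1$ are untouched; the paper's bases are special of both types and the whole climb preserves both, which is how the paper obtains type-2 realizations in every residue class.

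Two further problems. First, the base realizations are never constructed -- you only describe a template and defer the ``hard part'' -- whereas these explicit paths (eight one-line families, with their extendability checks) are the actual content of the paper's proof. Second, your proposed explanation of the exceptional threshold $b\geq t-1$ (a counting obstruction making $b=t-2$ impossible when $t\equiv 2$ and $d\equiv 2\pmod 4$) is not correct at the level of base cases: for $t=6$ the path $[0,6,4,2,3,5,7,1]$ is a special type-2, $t^{0}$-extendable realization of $\{1,2^{4},6^{2}\}$, i.e.\ $b=t-2$ and $d=2$. The paper's threshold does not come from non-existence of such bases but from its climbing machinery: for $t\equiv 2\pmod 4$ the climb needs bases special of \emph{both} types, and (for $t=6$, by exhaustive check) no realization of $\{1,2^{t-2},t^{2}\}$ is; compare Remark \ref{cyclict2a1}, where the authors fall back on cyclic realizations for exactly this class. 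So the obstruction you propose to ``make precise'' is not there, and the crux of your argument would fail in both directions.
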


\begin{proof}
Suppose firstly $t \equiv0\pmod 4$ and consider the following special linear
realizations of type $2$:
\begin{footnotesize}
$$\left.\begin{array}{rl}
S_{1,2}\{1,2^{t-2}\}=\hspace{-0.3cm} & [0\fr{+2} t-2, t-1\fr{-2} 1],\\
S_{1,2}\{1, 2^{t-1}\} =\hspace{-0.3cm} & [ 0\fr{+2} t, t-1\fr{-2} 1],\\
\end{array}
\right.\quad
\left.\begin{array}{rl}
S_2\{1,2^{t-2},t^2\} =\hspace{-0.3cm} & [0, t\fr{-2} 2,
3\fr{+2} t+1, 1],\\
S_2\{1,2^{t-1},t^2\}=\hspace{-0.3cm} & [0, 2, t+2\fr{-2} 4, 3\fr{+2} t+1, 1].\\
\end{array}
\right.$$
\end{footnotesize}
It is not hard to check that all the above realizations are $\ext{t-4}$.
In fact in each case, the realization contains the edges
$[v-t+4x,v-t+2+4x]$
and $[v-t+1+4x,v-t+3+4x]$  for any $x=0,\ldots,\frac{t-4}{4}$.
Hence by Lemmas \ref{c,c+4w} and \ref{senzak}(i), the existence of the two first special
linear realizations
implies that of a special linear realization of
type $2$ of $\{1,2^{t-2}, t^c\}$ and of $\{1,2^{t-1}, t^c\}$, respectively, for any
positive $c \equiv 0 \pmod 4$. Also, the existence of the third and forth special linear
realizations implies that of
a special linear realization of
type $2$ of $\{1,2^{t-2}, t^c\}$ and of $\{1,2^{t-1}, t^c\}$, respectively, for any
$c\equiv 2 \pmod 4$.\\
Now from Lemma \ref{special2} we have that the existence of all these linear realizations
ensures that of a special linear realization of type $2$ of $\{1,2^b,t^c\}$
for any $b\geq t-2$ and any positive even integer $c$.
\medskip

\noindent Suppose now $t\equiv 2\pmod 4$, $t\geq 6$. We have the following special linear
realizations
of both type $1$ and $2$:
\begin{footnotesize}
$$\left.\begin{array}{rl}
S_{1,2}\{1,2^{t-2}\}=\hspace{-0.3cm} & [0\fr{+2} t-2, t-1\fr{-2} 1],\\
S_{1,2}\{1, 2^{t-1}\} =\hspace{-0.3cm} & [ 0\fr{+2} t, t-1
\fr{-2} 1],\\
        \end{array}\right.
\quad
\left.\begin{array}{rl}
S_{1,2}\{1,2^{t-1},t^{2}\}=\hspace{-0.3cm} &[0 , t\fr{-2} 2,
t+2, t+1\fr{-2} 1],\\
S_{1,2}\{1,2^t,t^{2}\}=\hspace{-0.3cm} & [ 0,t\fr{-2} 2
, t+2, t+3\fr{-2} 1 ].\\
        \end{array}\right.$$
  \end{footnotesize}
The reader can check that the above realizations are $\ext{t-6}$,
since they contain the edges  $[v-t+4x,v-t+2+4x]$
and $[v-t+1+4x,v-t+3+4x]$  for any $x=0,\ldots,\frac{t-6}{4}$.
Hence by Lemmas \ref{c,c+4w} and \ref{senzak}(ii)  we have the existence of a special
linear
realization of type $2$
for the following lists:  $\{1,2^{t-2}, t^{tk+d}\}$ for any $d\equiv 0\pmod4$;
$\{1,2^{t-1}, t^{c}\}$ for any $c$ even;
$\{1,2^{t}, t^{tk+d}\}$ for any $d\equiv2\pmod4$.
Thus, by Lemma \ref{special2} there exists a special linear realization of type $2$ of
$\{1,2^b,t^{tk+d}\}$
for any $b\geq t-2$ if $d\equiv0\pmod4$ and for any $b\geq t-1$ if $d\equiv2\pmod4$.
\end{proof}

\begin{prop}\label{t4a1}
Let $t\equiv 0 \pmod 4$, $t\geq4$.
The list $\{1, 2^b, t^c\}$ admits a linear realization for any $b\geq t-2$ and any
positive
odd integer $c$.
\end{prop}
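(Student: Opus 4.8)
The plan is to run, for odd $c$, the same machine used for even $c$ in Proposition \ref{c even}: build a few $t^{t-4}$-extendable base realizations and then inflate them in the two directions $c$ and $b$ by means of Lemmas \ref{c,c+4w} and \ref{senzak}. Writing $c=tk+d$ with $0\le d<t$, only the residues $d=1$ and $d=3$ must be produced: once a $t^{t-4}$-extendable realization of $\{1,2^b,t^{d}\}$ is in hand, Lemma \ref{c,c+4w} raises the exponent of $t$ by any multiple of $4$ up to $t$, while Lemma \ref{senzak}(i) raises it by any multiple of $t$ and keeps $t^{t-4}$-extendability; together they reach every $c\equiv d\pmod 4$. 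I stress that both operations require only extendability and no special symmetry, so for the $c$-direction the base realizations may be completely generic.

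The $b$-direction forces a departure from the even case. There one had type-$2$ realizations and could append pairs of $2$'s by Lemma \ref{special2}; here, with $a=1$, there is a single length-$1$ edge and hence a single change of parity along the path, and a short check shows that the two endpoints cannot be $0$ and $1$ once $t\ge 6$, so type $2$ is unavailable. Instead I would make the base realizations special of type $1$ and apply Lemma \ref{senzak}(iii), which raises $b$ by multiples of $4$ while preserving both type $1$ and $t^{t-4}$-extendability; supplying four consecutive starting values $b_0,b_0+1,b_0+2,b_0+3$ then covers all $b\ge b_0$. For such a realization the shape is essentially forced: the unique length-$1$ edge is $[v-2,v-1]$, so the path is an even block starting at $0$ followed by an odd block, each an almost-monotone $\fr{+2}$ chain carrying the length-$t$ edges as ``chords''.

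The hard part, which I expect to absorb most of the work, is that this type-$1$ construction survives only when $b$ is not too small. The extendability edges sit at the tops of the two blocks and compete with the placement of the length-$t$ chords, and when there are too few vertices the two demands are incompatible; for the smallest exponent the failure is absolute, e.g.\ $\{1,2^{t},t^1\}$ admits no $t^{t-4}$-extendable realization at all, so the $c$-direction cannot even be started from $c=1$ there. I would therefore treat the short initial window of $b$ by hand. For $b=t-2$ and $b=t-1$ the explicit paths $[0, t\fr{-2}2,\,1\fr{+2}t-1]$ and $[0, t\fr{-2}2,\,1\fr{+2}t+1]$ are genuine linear realizations of $\{1,2^{t-2},t\}$ and $\{1,2^{t-1},t\}$, and one checks directly that they are $t^{t-4}$-extendable, so they can be inflated in $c$; the residue classes and the remaining small values of $b$ that $c=1,3$ fail to reach would be caught by a few further explicit extendable base realizations with a slightly larger exponent of $t$. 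Verifying simultaneously the exact edge-length multiset and the presence of every extendability edge in these hand-built paths is the genuine obstacle; granting that, Lemmas \ref{c,c+4w}, \ref{senzak}(i) and \ref{senzak}(iii) assemble the whole range $b\ge t-2$, $c$ odd.
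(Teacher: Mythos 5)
Your skeleton is exactly the paper's: $\ext{t-4}$ bases pushed up in $c$ by Lemmas \ref{c,c+4w} and \ref{senzak}(i), type-$1$ bases pushed up in $b$ by Lemma \ref{senzak}(iii), hand-built paths at the bottom of the $b$-range; your two paths for $b=t-2,t-1$ are correct and $\ext{t-4}$, and your claim that $\{1,2^{t},t\}$ has no $\ext{t-4}$ realization is also correct for $t\geq 8$. The first genuine gap is that your fallback for the cases these bases miss --- ``explicit \emph{extendable} base realizations with a slightly larger exponent of $t$'' --- can never produce the exponent $c=d$ itself: an extendable base at $t^{d+4}$ only generates $c\geq d+4$. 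Concretely, the list $\{1,2^{t},t\}$ (which the proposition must cover) is reached by nothing in your scheme: it has no extendable realization, so neither Lemma \ref{c,c+4w} nor Lemma \ref{senzak} applies to it, and larger-exponent bases cannot come back down to $c=1$. The paper is forced to exhibit the non-extendable type-$1$ path $S_1\{1,2^t,t\}=[0\fr{+2} t+2, t+1, 1\fr{+2} t-1 ]$ and then to grow $b$ at $c=1$ via the type-$1$ mechanism of Lemma \ref{special1} (which adds $2$'s one at a time but destroys extendability), a device entirely absent from your plan; without it, all pairs $(b,c)$ with $c=d$ and $b$ outside the reach of \ref{senzak}(iii) remain open.

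The second gap is your assumption that the type-$1$, $\ext{t-4}$ bases can be taken at four \emph{consecutive} values $b_0,\dots,b_0+3$ near the bottom of the range. The paper achieves this only for two residues: for $d\equiv1\pmod 4$ its bases sit at $b=t+1,t+2$ and then at $b=2t-1,2t$ --- a full period higher --- so the whole window $b\equiv 0,3\pmod 4$, $t+3\leq b\leq 2t-4$ has to be filled by parameterized families $r\{1,2^{t+4z+3},t^5\}$, $r\{1,2^{t+4z+4},t^5\}$, $r\{1,2^{2t-5},t^5\}$, $r\{1,2^{2t-4},t^5\}$; this is on the order of $t/2$ further lists, not ``a few,'' and it is where most of the paper's labor lies. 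The same must then be redone for $d\equiv 3\pmod 4$ (bases at $b=t+3,t+4,2t+1,2t+2$ with $t^3$, plus another window of families), for which you supply no base whatsoever. Finally, $t=4$ is not addressed: your parity objection to type $2$ applies only from $t\geq 6$ on, and in fact the paper settles $t=4$ precisely by the type-$2$ route you discard ($S_2\{1,2^2,4^{4k+1}\}$ etc., e.g.\ $[0,4,2,3,1]$, followed by Lemma \ref{special2}), while your type-$1$ machinery would need yet further bases there. So the architecture is right, but the portions you defer constitute the actual proof, and the one reduction you propose for them fails at $c=d$.
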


\begin{proof}
If $t=4$ we have the following special linear realizations of type $2$:
\begin{footnotesize}
$$\left.\begin{array}{rl}
 S_2\{1,2^2,4^{4k+1}\}=\hspace{-0.3cm} &[0 \rightarrow 4k+4, 4k+2 \rightarrow 2, 3
\rightarrow 4k+3, 4k+1 \rightarrow 1 ],\\
S_2\{1,2^3,4^{4k+1}\}=\hspace{-0.3cm} &[0, 2 \rightarrow 4k+2, 4k+4 \rightarrow 4, 3
\rightarrow 4k+3, 4k+5 \rightarrow 1 ],\\
S_2\{1,2^2,4^{4k+3}\}=\hspace{-0.3cm} &[0 \rightarrow 4k+4, 4k+6 \rightarrow 2, 3
\rightarrow 4k+3, 4k+5 \rightarrow 1],\\
S_2\{1,2^3,4^{4k+3}\}=\hspace{-0.3cm} &[0, 2 \rightarrow 4k+6, 4k+4 \rightarrow 4, 3
\rightarrow 4k+7, 4k+5 \rightarrow 1 ].
\end{array}
\right.$$
\end{footnotesize}
Hence by Lemma \ref{special2} we have a linear realization of $L=\{1,2^b,4^c\}$ for any
$b\geq 2$ and any odd positive integer $c$.\\
So, we can assume $t\geq 8$. Write $c=tk+d$ with $k\geq 0$ and $0\leq d<t$.
 We split the proof into two parts according to the congruence class of
$d$  modulo $4$. \\
i) Let $d\equiv 1\pmod 4$.\\
For $b=t+1,t+2,2t-1, 2t$ we have
\begin{footnotesize}
$$\left.\begin{array}{rl}
S_1\{1,2^{t+1},t \}=\hspace{-0.3cm} &[0\fr{+2} t+2, t+3,
 t+1 ,1\fr{+2} t-1 ],\\
S_1\{1,2^{t+2},t\}=\hspace{-0.3cm} &[0\fr{+2} t+4, t+3, t+1,1\fr{+2}  t-1 ],\\
S_1\{1,2^{2t-1},t\}=\hspace{-0.3cm} &[ 0\fr{+2} 2t,  2t+1\fr{-2} t+1,1\fr{+2} t-1],\\
S_1\{1,2^{2t},t\}=\hspace{-0.3cm} &[ 0\fr{+2}  2t+2,
2t+1\fr{-2} t+1 , 1\fr{+2} t-1].\\
 \end{array}
\right.$$
\end{footnotesize}
Observe that these four special realizations are $\ext{t-4}$ and so we can apply Lemmas \ref{c,c+4w} and \ref{senzak}(i). Also, by
Lemma \ref{senzak}(iii), we are left to consider the following
cases.
For $b=t-2,t-1,t$ we have
\begin{footnotesize}
$$\left.\begin{array}{rl}
r\{1,2^{t-2},t\}=\hspace{-0.3cm} &[0, t\fr{-2} 2, 1\fr{+2}  t-1],\\
r\{1,2^{t-1},t\}=\hspace{-0.3cm} & [ 0\fr{+2} t, t-1, t+1,  1\fr{+2} t-3 ],\\
S_1\{1,2^t,t\}=\hspace{-0.3cm} &  [0\fr{+2} t+2, t+1, 1\fr{+2} t-1 ],\\
r\{1,2^t,t^{5}\}=\hspace{-0.3cm} &[0, 2 ,  t+2\fr{-2}  6,
t+6, t+4, 4, 3, 1, t+1\fr{-2}  5 , t+5, t+3
].
 \end{array}
\right.$$
\end{footnotesize}
\noindent For any $z$ such that $0\leq z\leq
\frac{t-12}{4}$ we
take
\begin{footnotesize}
$$\left.\begin{array}{rl}
r\{1,2^{t+4z+3},t^5\}=\hspace{-0.3cm} &[ 0\fr{+2} 4z+4, t+4z+4
\fr{-2} 4z+8, t+4z+8, t+4z+6 ,\\
&4z+6, 4z+7\fr{-2} 1, t+1 \fr{-2} 4z+9, t+4z+9 \fr{-2} t+3],\\
r\{1,2^{t+4z+4},t^5\}=\hspace{-0.3cm} &[ 0\fr{+2} 4z+6 ,t+4z+6\fr{-2} 4z+10,  t+4z+10,  t+4z+8, \\
&4z+8,  4z+7 \fr{-2} 1, t+1\fr{-2} 4z+9, t+4z+9\fr{-2} t+3 ].\\
 \end{array}
\right.$$
\end{footnotesize}
Now, for $b=2t-5,2t-4$ we have
\begin{footnotesize}
$$\left.\begin{array}{rl}
r\{1,2^{2t-5},t^{5}\}=\hspace{-0.3cm} &[ 0\fr{+2} t-4, 2t-4\fr{-2} t, 2t, 2t-2,  t-2, t-1\fr{-2} 1 \rightarrow 2t+1 \fr{-2}  t+3 ],\\
r\{1,2^{2t-4},t^{5}\}=\hspace{-0.3cm} &[ 0\fr{+2} t-2,  2t-2 \fr{-2} t+2, 2t+2, 2t, t, t-1\fr{-2} 1 \rightarrow 2t+1 \fr{-2} t+3 ].\\
 \end{array}
\right.$$
\end{footnotesize}
By an easy but long check one can see that all the previous realizations, except
$S_1\{1,2^t,t\}$ are $\ext{t-4}$.
Hence, we obtain the existence of a linear realization of $\{1,2^b,t^c\}$ for any $b\geq
t-2$ and any $c\equiv 1 \pmod4$.

\noindent ii) Let $d\equiv 3\pmod4$.\\
For $b=t+3, t+4, 2t+1, 2t+2$ we have
\begin{footnotesize}
$$\left.\begin{array}{rl}
S_1\{1,2^{t+3},t^3\}=\hspace{-0.3cm} & [ 0, 2, t+2\fr{-2} 4
, t+4, t+6, t+7\fr{-2} t+1, 1\fr{+2} t-1 ],\\
S_1\{1,2^{t+4},t^3\}=\hspace{-0.3cm} &[ 0, 2, 4 , t+4\fr{-2} 6, t+6, t+8, t+7\fr{-2} t+1, 1\fr{+2} t-1 ],\\
S_1\{1,2^{2t+1},t^3\}=\hspace{-0.3cm} &[ 0\fr{+2} 2t+4, 2t+5\fr{-2}  t+7 , 7\fr{+2} t+1 , 1, 3, 5, t+5, t+3 ],\\
S_1\{1,2^{2t+2},t^3\}=\hspace{-0.3cm} &[ 0\fr{+2} 2t+6,2t+5\fr{-2} t+7 , 7\fr{+2} t+1 , 1, 3,  5 , t+5, t+3 ].
 \end{array}
\right.$$
\end{footnotesize}
Observe that these four special realizations are $\ext{t-4}$ and so we can apply Lemmas \ref{c,c+4w} and \ref{senzak}(i). Also, by
Lemma \ref{senzak}(iii), we are left to consider the following
cases.

\noindent For $b=t-2,t-1,t,2t-3,2t-2$ we have
\begin{footnotesize}
$$\left.\begin{array}{rl}
r\{1,2^{t-2},t^3\}=\hspace{-0.3cm} &[0 ,t, t+2 , 2 \fr{+2} t-2,  t-1, t+1 , 1\fr{+2} t-3],\\
r\{1,2^{t-1},t^3\}=\hspace{-0.3cm} &[0, t, t+2 , 2\fr{+2} t-2, t-1\fr{-2} 1 , t+1, t+3],\\
r\{1,2^t,  t^3\}=\hspace{-0.3cm} &[0, 2 ,t+2, t+4 ,4\fr{+2} t, t-1\fr{-2} 1 , t+1, t+3 ],\\
r\{1,2^{2t-3},t^3\}=\hspace{-0.3cm} &[ 0\fr{+2} t , 2t\fr{-2} t+2, t+3\fr{+2} 2t+1 \rightarrow 1\fr{+2} t-1 ],\\
r\{1,2^{2t-2},t^3\}=\hspace{-0.3cm} &[ 0\fr{+2} t+2, 2t+2\fr{-2} t+4, t+3 \fr{+2}  2t+1 \rightarrow 1\fr{+2} t-1
].
 \end{array}
\right.$$
\end{footnotesize}
\noindent For any $y$ with $0\leq y\leq \frac{t-8}{4}$,
 we have the following linear realizations:
\begin{footnotesize}
$$\left.\begin{array}{rl}
r\{1,2^{t+4y+1},t^3\}=\hspace{-0.3cm} &[0\fr{+2} 4y+4 ,t+4y+4\fr{-2} 4y+6,4y+7\fr{+2} t+1, 1\fr{+2} 4y+5 ,t+4y+5\fr{-2} t+3],\\
r\{1,2^{t+4y+2},t^{3}\}=\hspace{-0.3cm} &[0\fr{+2} 4y+6, t+4y+6\fr{-2}  4y+8 ,4y+7\fr{+2} t+1, 1\fr{+2} 4y+5, t+4y+5\fr{-2} t+3 ].
 \end{array}
\right.$$
\end{footnotesize}
By an easy but long check one can see that  the last seven realizations
are $t^{t-4}$-extendable. So by
Lemma
\ref{senzak}(i)
we obtain the existence of a linear realization of $\{1,2^b,t^c\}$ for any $b\geq
t-2$ and any $c \equiv 3 \pmod4$.
\end{proof}

\begin{prop}\label{t2a1}
Let $t\equiv 2 \pmod 4$, $t\geq 6$ and let $c$ be an odd integer. Write $c=tk+d$, with
$k\geq 0$ and $0\leq d < t$. The list $\{1, 2^b, t^{tk+d}\}$ admits a linear realization
for any $b\geq t-1$ if $d\equiv 1 \pmod 4$; for any $b\geq t-2$ if $d\equiv 3\pmod 4$.
\end{prop}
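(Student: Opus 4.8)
The plan is to adapt, essentially verbatim, the scheme of the proof of Proposition~\ref{t4a1}, with the single structural change that for $t\equiv 2\pmod 4$ the device that raises the multiplicity of $t$ by a multiple of $t$ is Lemma~\ref{senzak}(ii) rather than Lemma~\ref{senzak}(i); the price is that this transfer now requires the starting realization to be special of type~$1$. Writing $t=4u+2$, we have $w:=\lfloor t/4\rfloor-1=u-1$ and $4w=t-6$, so every seed realization I construct must be simultaneously special of type~$1$ and $t^{t-6}$-extendable.

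Since $c$ is odd and $t$ is even, the remainder $d=c\bmod t$ is odd, hence $d\equiv 1$ or $d\equiv 3\pmod 4$; I treat the two cases separately, aiming for seeds with $c_0=1$ in the first and $c_0=3$ in the second. For each case I would exhibit explicit special type-$1$, $t^{t-6}$-extendable realizations $S_1\{1,2^b,t^{c_0}\}$, written in the progression notation $\fr{+2}$, $\fr{-2}$, $\rightarrow$, for a short band of base values of $b$ covering all residues modulo $4$ at the bottom of the admissible range — four consecutive values starting at $b=t-1$ when $d\equiv 1\pmod 4$ and at $b=t-2$ when $d\equiv 3\pmod 4$ — supplemented, exactly as in Proposition~\ref{t4a1}, by a few parametrized families covering a middle band of $b$ and by isolated base realizations (some of them not extendable, hence realizing a single pair $(b,c_0)$) wherever the uniform pattern degenerates. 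The asymmetry of the two bounds ($b\ge t-1$ versus $b\ge t-2$) is forced precisely by the smallest $b$ for which such a seed can be written in each residue family.

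The three extensions are then applied in the order dictated by the hypotheses each lemma needs and the conclusions it preserves. From a seed I first invoke Lemma~\ref{senzak}(iii) to pass from $b$ to $b+4y$ for all $y\ge 1$, remaining special of type~$1$ and $t^{t-6}$-extendable; together with the four base values this settles every $b$ in the required range. Next I invoke Lemma~\ref{senzak}(ii) to pass from $c_0$ to $c_0+tk$ for all $k\ge 1$, again preserving both properties. Only at the very end do I apply Lemma~\ref{c,c+4w}, which adds $4,8,\ldots,t-2$ to the multiplicity of $t$ but need not preserve type~$1$; this is harmless because the proposition asks merely for a linear realization. The arithmetic that makes the two seed families exhaustive is the congruence $c_0+tk+4j\equiv c_0+4j\pmod t$: as $j$ runs over $0,\ldots,\frac{t-2}{4}$ the value $c_0+4j$ sweeps the residues $\equiv c_0\pmod 4$ inside $\{1,\ldots,t-1\}$, so the family rooted at $c_0=1$ realizes exactly the $c$ with $d\equiv 1\pmod 4$ and the one rooted at $c_0=3$ exactly those with $d\equiv 3\pmod 4$; note that although $c$ itself alternates modulo $4$ as $k$ grows, the remainder $d$ does not.

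The main obstacle is, as usual, the explicit construction of the seed realizations together with the ``easy but long'' verification attached to each: one must check that every listed sequence is a genuine Hamiltonian path of $K_{|L|+1}$ whose edge-length multiset is exactly $\{1,2^b,t^{c_0}\}$, that its two largest vertices are adjacent so that it is special of type~$1$, and that it contains the extendability pattern $[v-t+4x,\,v-t+2+4x]$ and $[v-t+1+4x,\,v-t+3+4x]$ for $x=0,\ldots,\frac{t-6}{4}$. I expect the genuinely delicate points to be the smallest value $t=6$, where $w=0$ and several of the progressions collapse to single edges, and the few base pairs near $b=t-2,t-1$ where the path cannot be written in the generic form and must be recorded by hand.
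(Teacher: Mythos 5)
Your global strategy (seeds that are simultaneously special of type $1$ and \ext{t-6}, then Lemma~\ref{senzak}(iii) for $b$, Lemma~\ref{senzak}(ii) for the $tk$-shift, and Lemma~\ref{c,c+4w} last, with the coverage arithmetic $c_0+tk+4j$) is exactly the paper's skeleton, but the seeds you presuppose do not exist at half of the required positions, and this is provable, not a matter of patience with explicit paths. Take case (i) with the seed $(b,c_0)=(t+1,1)$, i.e.\ $L=\{1,2^{t+1},t\}$, $v=t+4$. Since $a=1$, the $i=2$ variant of extendability is unavailable (it needs at least two edges of length $1$), so \ext{t-6} forces the length-$2$ edges $[4,6],[5,7],\ldots,[t-2,t],[t-1,t+1]$ to lie in the path. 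Removing the unique length-$1$ edge splits any realization into an even subpath and an odd subpath, one of which is a monotone $+2$-progression and the other of which uses exactly one chord of length $t$ (the only candidates being $[0,t],[2,t+2]$ on the evens, $[1,t+1],[3,t+3]$ on the odds). Checking the four chords: two of them make the required edge $[t-2,t]$ or $[t-1,t+1]$ impossible, and the other two force the subpath endpoints so that the only feasible length-$1$ join is $[0,1]$, leaving $t+2$ and $t+3$ non-adjacent. Hence no realization of $\{1,2^{t+1},t\}$ is both special of type $1$ and \ext{t-6}. The same phenomenon kills your case (ii) seeds at $(t-1,3)$ and $(t,3)$ (one can check it already for $t=10$), which is precisely why the paper's seed list conspicuously avoids these spots.

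This failure is not repairable by your hedge about ``isolated base realizations realizing a single pair $(b,c_0)$'': the holes occur at entire residue classes of $b$ modulo $4$ and must cover infinitely many $c$ (all $k\geq 1$ and, after Lemma~\ref{senzak}(iii), all $b$ in that residue class), and they also cannot be filled by moving the seed to $c_0=5$, because from $c_0=5$ the reachable multiplicities are $tk+5+4j\leq tk+t+3$, which never hit $d=1$ (as $t\equiv 2\pmod 4$). The paper needs three structurally different devices that your plan omits: Lemma~\ref{special1} to dispose of $c=1$ for \emph{all} $b\geq t-1$ at once; special type-$1$ seeds at the \emph{larger} multiplicity $c_0=t+1$ (namely $S_1\{1,2^{t+1},t^{t+1}\}$, $S_1\{1,2^{t+2},t^{t+1}\}$) to catch $d=1$, $k\geq 1$ in the bad residues of $b$, together with non-special \ext{t-10} seeds at $c_0=5$ for $k=0$; and, in case (ii), families parametrized directly by $k$ (such as $r\{1,2^{t-1},t^{tk+3}\}$) at the values $b=t-1,t$ where no special extendable seed exists. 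Your proposal also leaves $t=6$, where the whole extendability mechanism degenerates, entirely unresolved, whereas the paper settles it separately with ad hoc realizations.
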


\begin{proof}
First, consider the case $t=6$. We have the following linear realizations:
\begin{footnotesize}
$$\left.\begin{array}{rl}
r\{1,2^4,6^{6k+1}\}=\hspace{-0.3cm} &[0 \rightarrow 6k+6, 6k+4\rightarrow 4, 2\rightarrow
6k+2, 6k+1\rightarrow 1, 3\rightarrow 6k+3, 6k+5\rightarrow 5],\\
S_1\{1,2^4,6^{6k+3}\}=\hspace{-0.3cm} &[0 \rightarrow 6k+6, 6k+4 \rightarrow 4, 2
\rightarrow 6k+8, 6k+7 \rightarrow 1, 3 \rightarrow 6k+3, 6k+5 \rightarrow 5 ],\\
S_1\{1,2^4,6^{6k+5}\}=\hspace{-0.3cm} &[0 \rightarrow 6k+6, 6k+8 \rightarrow 2, 4
\rightarrow 6k+10, 6k+9 \rightarrow 3, 1 \rightarrow 6k+7, 6k+5 \rightarrow 5 ],\\
S_1\{1,2^5,6^{6k+1}\}=\hspace{-0.3cm} &[0, 2 \rightarrow 6k+2, 6k+4 \rightarrow 4, 6
\rightarrow 6k+6, 6k+7 \rightarrow 1, 3 \rightarrow 6k+3, 6k+5 \rightarrow 5
].
\end{array}
\right.$$
\end{footnotesize}
Hence by Lemma \ref{special1} we have the existence of a  linear realization
of
$L=\{1,2^b,6^c\}$ for any $b\geq 4$ and any odd integer $c$.
So from now on, we can assume $t\geq 10$. We split the proof into two parts according
to the congruence class of
$d$ modulo $4$. \\
\noindent i) Let $d\equiv 1\pmod 4$.\\
For $b=t-1,t,t+5,t+6$ we have the following special realizations of type $1$:
\begin{footnotesize}
$$\left.\begin{array}{rl}
S_1\{1,2^{t-1},t\}=\hspace{-0.3cm} &[ 0\fr{+2} t, t+1
, 1\fr{+2} t-1 ],\\
S_1\{1,2^t,t\}=\hspace{-0.3cm} &[ 0\fr{+2} t+2, t+1 ,1\fr{+2} t-1 ],\\
S_1\{1,2^{t+5},t^5\}=\hspace{-0.3cm} &[ 0\fr{+2} 8 , t+8\fr{-2}  10 , t+10, t+11, t+9, t+7
,7\fr{+2} t+1, 1, 3, 5 , t+5, t+3],\\
S_1\{1,2^{t+6},t^5\}=\hspace{-0.3cm} &[ 0\fr{+2} 10, t+10\fr{-2}  12 ,t+12, t+11, t+9 , t+7 , 7\fr{+2} t+1, 1, 3, 5 , t+5, t+3].\\
      \end{array}
\right.$$
\end{footnotesize}
Observe that the previous four special realizations of type $1$ are $\ext{t-6}$, so we can apply Lemmas \ref{c,c+4w} and \ref{senzak}(ii).
Furthermore, by Lemma \ref{special1} we can assume $c>1$ and by Lemma
\ref{senzak}(iii) we are left to consider
only linear realizations for $b=t+1,t+2$:
\begin{footnotesize}
$$\left.\begin{array}{rl}
r\{1,2^{t+1},t^{5}\}=\hspace{-0.3cm} &[ 0, 2, 4, t+4, t+6, 6\fr{+2}  t+2 , t+3, 3, 1, t+1 \fr{-2} 5, t+5, t+7],\\
r\{1,2^{t+2},t^{5}\}=\hspace{-0.3cm} &[ 0,2,4, 6, t+6, t+8, 8\fr{+2} t+4 , t+3, 3, 1 , t+1\fr{-2} 5 , t+5, t+7 ],\\
S_1\{1,2^{t+1},t^{t+1}\}=\hspace{-0.3cm} &  [ 0, 2, 4\rightarrow t+4, t+6\rightarrow 6,
8\rightarrow t+8,\ldots, t-2 \rightarrow 2t-2, 2t \rightarrow t, \\
& t+2 \rightarrow 2t+2, 2t+3 \rightarrow  3, 1\rightarrow 2t+1, 2t-1\rightarrow t-1,
t-3\rightarrow 2t-3, \\
& 2t-5\rightarrow t-5, \ldots,  t+9\rightarrow 9, 7, 5\rightarrow t+5, t+7 ],\\
S_1\{1,2^{t+2},t^{t+1}\}=\hspace{-0.3cm} &  [ 0, 2, 4,6\rightarrow t+6, t+8\rightarrow 8,
10\rightarrow t+10,\ldots, t \rightarrow 2t, 2t+2 \rightarrow t+2, \\
& t+4 \rightarrow 2t+4, 2t+3 \rightarrow  3, 1\rightarrow 2t+1, 2t-1\rightarrow t-1,
t-3\rightarrow 2t-3,\\
& 2t-5\rightarrow t-5, \ldots,  t+9\rightarrow 9, 7, 5\rightarrow t+5, t+7 ].\\
\end{array}
\right.$$
\end{footnotesize}
The first two linear realizations are $\ext{t-10}$, whereas the last two are  special of type $1$ and $\ext{t-6}$. Hence, we proved the existence of a linear realization of
$\{1,2^b,t^{tk+d}\}$
for any $b\geq t-1$ and $d\equiv 1 \pmod4$.

\noindent
ii) Let  $d\equiv 3 \pmod 4$.\\
For $b=t-2,t+1, t+3,t+4$ we have the following special linear realizations of type $1$:
\begin{footnotesize}
$$\left.\begin{array}{rl}
S_1\{1,2^{t-2},t^{3}\} =\hspace{-0.3cm} & [0 ,t \fr{-2} 2 , t+2, t+1, 1\fr{+2}
t-1],\\
S_1\{1,2^{t+1},t^3\}=\hspace{-0.3cm} & [0,2 , t+2\fr{-2} 4, t+4, t+5 , t+3 , t+1, 1\fr{+2}
 t-1],\\
S_1\{1,2^{t+3},t^3\}=\hspace{-0.3cm} &[ 0\fr{+2} t+6, t+7, 7\fr{+2} t+1, 1, 3, 5, t+5, t+3],\\
S_1\{1,2^{t+4},t^3\}=\hspace{-0.3cm} &[ 0\fr{+2} t+8, t+7, 7\fr{+2} t+1, 1, 3,5, t+5, t+3].\\
\end{array}
\right.$$
\end{footnotesize}
Observe that  these four linear realizations are
$\ext{t-6}$. By Lemmas \ref{c,c+4w} and
\ref{senzak}(iii), we are left to consider the following linear realizations for
$b=t-1,t$:
\begin{footnotesize}
$$\left.\begin{array}{rl}
r\{1,2^{t-1},t^{tk+3}\}=\hspace{-0.3cm} & [0\rightarrow tk+t, tk+t+2\rightarrow2,
4\rightarrow tk+4, tk+6\rightarrow6,\ldots, t-2\rightarrow tk+t-2,\\ & tk+t-1\rightarrow
t-1, t-3\rightarrow tk+t-3, \ldots, tk+5\rightarrow5, 3, 1\rightarrow tk+t+1,\\
&
tk+t+3\rightarrow t+3 ],\\
r\{1,2^t,t^{tk+3}\}=\hspace{-0.3cm} & [0, 2\rightarrow tk+t+2, tk+t+4\rightarrow4,
6\rightarrow tk+6, tk+8\rightarrow8,\ldots, t\rightarrow tk+t,\\ & tk+t-1\rightarrow t-1,
t-3\rightarrow tk+t-3, \ldots, tk+5\rightarrow5, 3, 1\rightarrow tk+t+1,\\
& tk+t+3\rightarrow t+3 ].\\
\end{array}
\right.$$
\end{footnotesize}
Observe that the two previous realizations are
$\ext{t-10}$. Hence, by Lemma \ref{c,c+4w} we obtain a linear realization of
$\{1,2^b,t^{tk+d}\}$ for any
$b\geq
t-2$ and any $d\equiv 3 \pmod 4$.
\end{proof}

\begin{rem}\label{cyclict2a1}
Note that in previous propositions we have not constructed a linear realization of
$\{1,2^{t-2},t^{tk+d}\}$ when $t\equiv 2 \pmod 4$ and $d\equiv 1,2 \pmod4$.
However we point out that there exists a cyclic realization of both
$\{1,2^{t-2},t^{tk+t+4x+1}\}$ and $\{1,2^{t-2},t^{tk+t+4x+2}\}$, with $0<4x+1,4x+2<t$. Namely, we have
\begin{footnotesize}
$$\left.\begin{array}{rl}
r\{1,2^{t-2},t^{tk+1}\}=\hspace{-0.3cm} &[0 \rightarrow tk+t, tk+t-2\rightarrow
t-2,\ldots, tk+4 \rightarrow 4, 2\rightarrow tk+2,\\
&tk+1 \rightarrow 1, 3\rightarrow tk+3, \ldots, tk+t-1\rightarrow t-1],\\

c\{1^1,2^{t-2},t^{tk+t+4x+5}\}=\hspace{-0.3cm} &
[ 0\rightarrow tk+2t, tk+2t+2\rightarrow 2, 4\rightarrow
tk+2t+4, \ldots, tk+2t+4x+2\rightarrow\\
& 4x+2,
4x+4\rightarrow tk+2t+4x+4,
1\rightarrow tk+2t+1, tk+2t+3\rightarrow 3,\\
& \ldots,4x+1\rightarrow tk+2t+4x+1,
tk+2t+4x+3\rightarrow 4x+3, 4x+5\rightarrow \\
& tk+t+4x+5,
tk+t+4x+7\rightarrow 4x+7, \ldots, t-1\rightarrow tk+t-1,\\
& tk+t-2\rightarrow t-2, t-4\rightarrow tk+t-4, \ldots, 4x+6\rightarrow tk+t+4x+6],\\

S_2\{1,2^{t-2},t^{tk+2}\}=\hspace{-0.3cm} &[0 \rightarrow tk+t, tk+t-2\rightarrow
t-2,\ldots, tk+4 \rightarrow 4, 2\rightarrow tk+2, \\
&tk+3 \rightarrow 3,
5\rightarrow tk+5, \ldots, t-1\rightarrow tk+t-1, tk+t+1\rightarrow 1],\\

c\{1,2^{t-2},t^{tk+t+4x+6}\}=\hspace{-0.3cm} &[ 0, 2\rightarrow tk+2t+2,
tk+2t+4\rightarrow 4, 6\rightarrow tk+2t+6, tk+2t+8\rightarrow 8,\\
& \ldots, 4x+2\rightarrow
tk+2t+4x+2, tk+2t+4x+4\rightarrow 4x+4, 4x+6\rightarrow \\
& tk+t+4x+6, tk+t+4x+8\rightarrow
4x+8, \ldots, t\rightarrow tk+2t, tk+2t+1\\
& \rightarrow 1, 3\rightarrow tk+2t+3,
tk+2t+5\rightarrow 5, 7\rightarrow tk+2t+7, \ldots, 4x+3\rightarrow \\
&tk+2t+4x+3, t-3,
t-5\rightarrow tk+2t-5, tk+2t-7\rightarrow t-7, \ldots, \\
& 4x+5\rightarrow tk+2t+4x+5,
t-1\rightarrow tk+2t-1, tk+2t-3\rightarrow 2t-3].
 \end{array}
\right.$$
\end{footnotesize}
\end{rem}

\section{Construction of linear realizations of $\{1^a,2^b,t^c\}$ for $a\geq 2$}\label{4}

In this section we investigate $\BHR(\{1^a,2^b,t^c\})$ when $a\geq 2$ and $a+b\geq t-1$. In view of Remark
\ref{RemA>a}, we are left to consider only the cases when $a+b=t-1$.

\begin{prop}\label{a+b=t-1}
There exists a linear realization of $L=\{1^a,2^b,t^c\}$ for any even
integer $t\geq 4$ and any positive integer $a,b,c$ with $a\geq2$ and $a+b=t-1$.
\end{prop}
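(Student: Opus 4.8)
\emph{The plan is to exploit the rigidity forced by the hypothesis $a+b=t-1$.} Since $|L|=a+b+c=(t-1)+c$, we have $v=|L|+1=t+c$, and therefore the only edges of $K_v$ of length $t$ are the $c$ pairs $\{i,i+t\}$ with $0\le i\le c-1$. As $L$ prescribes exactly $c$ edges of length $t$, \emph{all} of them must occur in any realization. Consequently every vertex lying strictly inside a residue class modulo $t$ is already incident to two length-$t$ edges, so the $t$ residue classes modulo $t$ appear as forced subpaths, and the $t-1$ edges of length $1$ or $2$ must glue these $t$ subpaths into a single Hamiltonian path. Equivalently, contracting each chain to a point, the short edges have to trace a Hamiltonian path on the $t$ chains, each short edge being attached at a genuine endpoint of the chain it leaves. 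This structural picture both explains why $a+b=t-1$ is exactly the feasibility threshold and dictates the shape of the realizations to be built.

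First I would reduce to boundedly many values of $c$. Following the pattern of Propositions \ref{t4a1} and \ref{t2a1}, I would construct, for each admissible $b$, base realizations that are $\ext{4w}$ with $w=\lfloor t/4\rfloor-1$ (and special of type $1$ when Lemma \ref{senzak} so requires), for $c$ ranging over a finite set of residues; the splitting would follow the congruence of $t$ modulo $4$ and of $c$ modulo $4$. Lemmas \ref{c,c+4w} and \ref{senzak} then propagate each base case to all larger $c$ in the same residue class, so that only finitely many explicit paths per value of $b$ remain to be exhibited.

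For these base cases I would work in the two-row model suggested by the rigidity analysis: the bottom row $\{0,1,\dots,t-1\}$, the top row $\{t,t+1,\dots,t+c-1\}$, and the length-$t$ edges running vertically between them. A boustrophedon that climbs one chain and descends the next, joined at top and bottom by unit steps, gives the skeleton realizing $\{1^{t-1},t^c\}$ (the case $b=0$); to obtain exactly $b$ edges of length $2$ I would perturb $b$ of these unit joins into strides of length $2$ (skipping a column to be recovered later, and using the crossing $[\,t-1,t+1\,]$ at the seam), while keeping $a=t-1-b$ unit edges. Writing these families in the $\fr{+2}$/$\rightarrow$ notation indexed by $b$, one then checks that the multiset of lengths is $\{1^{t-1-b},2^b,t^c\}$ and that the edges $[v-t+4x,\,v-t+2+4x]$ and $[v-t+1+4x,\,v-t+3+4x]$ occur for $x=0,\dots,w$, i.e. that the realization is $\ext{4w}$. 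Finally, Remark \ref{RemA>a} lifts each point of the line $a+b=t-1$ to the whole region $a+b\ge t-1$, completing the reduction needed for the main theorem.

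The main obstacle is the bookkeeping uniform in the free parameter $b\in\{1,\dots,t-3\}$: one must place the $b$ length-$2$ strides so that the $t$ chains are linked into a \emph{single} path (not several), so that no vertex is repeated, and so that the outcome is genuinely $\ext{4w}$ and special of the prescribed type. The delicate region is the seam between the two rows (the vertices near $t-1$ and $t$), where length-$1$ and length-$2$ edges cross between the rows and where the two regimes $t\equiv0$ and $t\equiv2\pmod4$ behave differently; reconciling these endpoints with the extendability pattern is where an explicit case analysis seems unavoidable.
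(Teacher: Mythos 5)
Your structural analysis is correct and is precisely the picture behind the paper's proof: since $v=t+c$, all $c$ edges of length $t$ are forced, the residue classes modulo $t$ become forced chains, and the $t-1$ short edges must glue them into one path; the propagation machinery you invoke (base cases that are $\ext{4w}$ with $w=\lfloor t/4\rfloor-1$, then Lemmas \ref{c,c+4w} and \ref{senzak}) is also exactly the paper's. The gap is that for an existence statement of this kind the explicit base realizations \emph{are} the proof, and you never produce them: ``I would construct\ldots and one then checks'' defers the entire content, as your own last paragraph concedes. The paper's proof of this proposition consists almost entirely of that deferred work: four cases according to $a\bmod 4$, explicit paths $P_1,\ldots,P_8$ for $c=0,2$ (and $c=2,3$), a sign-change trick ($+1,-1,+2,-2$ reversed) to pass from $c$ even to $c$ odd, and a subpath surgery (replacing $[4u-3,4u-1,4u-2,4u-4]$ by $[4u-3,4u-4]$) to pass from $t\equiv 0$ to $t\equiv 2\pmod 4$, with each resulting path verified to be $\ext{4w}$.

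Moreover, the one concrete mechanism you do propose --- ``perturb $b$ of these unit joins into strides of length $2$'' --- fails as stated, and the failure mode is what forces the paper's case analysis. An interior perturbation of a run of $1$'s, such as replacing $[x,x+1,x+2,x+3]$ by $[x,x+2,x+1,x+3]$, changes the number of $2$'s by two (a skipped vertex must be recovered, costing a second $2$); only an endpoint adjustment like $[\ldots,t-3,t-1,t-2]$ changes it by one. Hence the attainable values of $b$ from a fixed skeleton come in parity classes, and hitting every $b$ with $a=t-1-b\geq 2$ requires different skeletons depending on $a\bmod 4$ --- this, together with keeping the $\ext{4w}$ pattern of edges $[v-t+4x,v-t+2+4x]$, $[v-t+1+4x,v-t+3+4x]$ intact, is the case analysis you label ``bookkeeping'' but do not carry out. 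Finally, note that for $t\equiv 2\pmod 4$ Lemma \ref{senzak}(ii) demands base realizations that are special of type $1$ (and uses $c\mapsto c+t$ to switch residues modulo $4$, since $4w+4=t-2$ there); your scheme never secures this property, so even the propagation step is not actually available in that half of the problem. As it stands, the proposal is a faithful plan for the paper's proof, not a proof.
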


\begin{proof}
Since by hypothesis  $a+b=t-1$ and, obviously, we have $t$ congruence classes modulo $t$,
we use $\pm 1$ or $\pm 2$ to change
the congruence class. As said above, all our linear realizations start from $0$, so  adding $t$
a suitable number of times, we take
all the elements of the congruence class $0$
modulo $t$. Then applying $\pm 1$ or $\pm 2$ we go into another congruence class modulo
$t$ and we take all the elements of this class adding or subtracting $t$. Next, we
change again class applying $\pm 1$ or $\pm 2$
and so on. In this way we have to reach all the congruence classes modulo $t$, namely all
the elements from $0$ to $|L|$,
using exactly $a$ times $1$, $b$ times $2$ and $c$ times $t$.\\
In order to obtain a complete solution constructing
a few number of realizations we look for those which are $t^{4w}$-extendable.
In particular, all the linear realizations of $\{1^a,2^b,t^c\}$  we are going to construct  with $a+b=t-1$
and $c=0,1,2,3$ are $t^{4w}$-extendable with
 $w=\lfloor\frac{t}{4}\rfloor-1$. Then applying Lemmas \ref{c,c+4w} and \ref{senzak}
we have a linear realization for any list  $\{1^a,2^b,t^c\}$ with $a+b=t-1$ and $c\geq1$.

\medskip
\noindent \emph{Case 1.} Let $a \equiv 0 \pmod4$.\\
We start with the case $t\equiv0\pmod4$, set $t=4u\geq8$.
So we consider the list
$L=\{1^{4y+4},2^{t-4y-5},t^c\}$
with  $0\leq y\leq u-2$ (since $1\leq a \leq t-2$).

\noindent A linear realization of $L$ for $c=0$ and $c=2$ is, respectively
$$P_1=[0\stackrel{+1}{\rightharpoondown}4y+3\stackrel{+2}{\rightharpoondown}4u-1,4u-2\stackrel{-2}{\rightharpoondown}4y+4]$$
and
$$P_2=[0,t,t+1,1\stackrel{+1}{\rightharpoondown}4y+1,4y+3,4y+2,4y+4,4y+5\stackrel{+2}{\rightharpoondown}4u-1,4u-2\stackrel{-2}{\rightharpoondown}4y+6].$$

Furthermore, starting from the realization $P_1$
we can obtain a linear realization for $c=1$ by changing the sign of each integer $+1,-1,+2,-2$
used to change the congruence classes modulo $t$; observe that now $|L|=t$.
Also, starting from the realization $P_2$  we obtain a linear
realization for $c=3$, again  by changing the sign of each integer
$+1,-1,+2,-2$; note that now $|L|=t+2$.

Suppose now $t\equiv 2\pmod 4$; set $t=4u-2\geq 6$.
Starting from the realization constructed above for $L$, when $t=4u\geq8$,
we obtain a realization for $\{1^{4y+4},2^{t-4y-5},t^{c}\}$ when $t=4u-2$.
If $c=0$, it is sufficient to replace the subpath
$[4u-3,4u-1,4u-2,4u-4]$ of $P_1$ with the edge  $[4u-3,4u-4]$.

Consider $c=2$. If $y\leq u-3$, then replace in $P_2$
the subpath
$[4u-3,4u-1,4u-2,4u-4]$  with the edge  $[4u-3,4u-4]$.

If $y=u-2$, i.e. $L=\{1^{t-2},2,t^{2}\}$, we have the following linear realization:
$$\left.\begin{array}{rl}
S_1 L=\hspace{-0.3cm} & [0, t,  t+1, 1\stackrel{+1}{\rightharpoondown}  t-3,
 t-1, t-2].
\end{array}
\right.$$

If $c=1$, the realization can be obtained as done for the case $t\equiv
0\pmod 4$, namely it is sufficient to change the signs of the
all $1$'s and $2$'s used to construct the realization when $c=0$.
Finally, if $c=3$ a linear realization is
$$P_3=[0,t \stackrel{-1}{\rightharpoondown}t-4y-3\stackrel{-2}{\rightharpoondown}1,t+1,t+2,2\stackrel{+2}{\rightharpoondown}t-4y-4].$$

At this point we have obtained a linear realization for
$\{1^{4y+4},2^{t-4y-5},t^c\}$,  for all even $t\geq 8$ and any positive integer $c$.

\medskip
\noindent \emph{Case 2.} Let $a \equiv 2 \pmod 4$.\\
We start with the case $t\equiv0\pmod4$, set $t=4u\geq 4$.
So we consider the list
$L=\{1^{4y+2},2^{t-4y-3},t^c\}$
with  $0\leq y\leq u-1$ (since $1\leq a \leq t-2$).

If $t\geq 8$ and $0\leq y\leq u-2$, to obtain a linear realization of
$L=\{1^{4y+2},2^{t-4y-3}\}$, replace in $P_1$
the subpath
$[4y\stackrel{+1}{\rightharpoondown}4y+3]$  with  $[4y,4y+2,4y+1,4y+3]$.

If $t\geq 4 $ and $y=u-1$ (i.e. $L=\{1^{t-2},2\}$) we have
$$\left.\begin{array}{rl}
S_1 L=\hspace{-0.3cm} & [0\stackrel{+1}{\rightharpoondown} t-3, t-1, t-2].
\end{array}
\right.$$

Applying the reasoning explained in Case 1, we have also a linear realization of  $\{1^{4y+2},2^{t-4y-3},t\}$.
Now, a linear realization of $L=\{1^{4y+2},2^{t-4y-3},t^{2}\}$ is
$$P_4=[0,t,t+1,1\stackrel{+1}{\rightharpoondown}4y+1\stackrel{+2}{\rightharpoondown}4u-1,4u-2\stackrel{-2}{\rightharpoondown}4y+2]$$
and from it
we can also obtain a linear realization of  $\{1^{4y+2},2^{t-4y-3},t^{3}\}$,
again changing the signs of $1$'s and $2$'s.

As in Case 1, to solve the case $t=4u-2\geq 6$, we start from the realizations
of $L$ when $t=4u\geq8$. In fact, in order to obtain a linear realization of
$\{1^{4y+2},2^{t-4y-3}\}$ when  $t=4u-2\geq 6$, it is sufficient to replace in
$r\{1^{4y+2},2^{t-4y-3}\}$ constructed for $t=4u$, the
subpath $[4u-3,4u-1,4u-2,4u-4]$ with $[4u-3,4u-4]$.

Analogously, in order to obtain a linear realization of
$\{1^{4y+2},2^{t-4y-3},t^{2}\}$ when  $t=4u-2\geq 6$, it is sufficient to replace in
$P_4$ the subpath $[4u-3,4u-1,4u-2,4u-4]$ with $[4u-3,4u-4]$.

As before, if $c=1$  a $rL$ can be obtained changing the signs
of all the $1$'s and $2$'s used to obtain the realization in the case $c=0$.
Finally, if $c=3$, we replace in $P_3$ the subpath $[t\stackrel{-1}{\rightharpoondown}t-3]$
with $[t,t-2,t-1,t-3]$.

\medskip
\noindent \emph{Case 3.} Let $a \equiv 3 \pmod 4$.\\
We start with the case $t\equiv0\pmod4$, set $t=4u\geq 8$.
So we consider the list
$L=\{1^{4y+3},2^{t-4y-4},t^c\}$
with  $0\leq y\leq u-2$ (since $1\leq a \leq t-2$).
A linear realization for $c=0$
 and for $c=2$ is, respectively,
 $$P_5=[0\stackrel{+1}{\rightharpoondown}4y+1,4y+3,4y+2\stackrel{+2}{\rightharpoondown}4u-2,4u-1\stackrel{-2}{\rightharpoondown}4y+5]$$
and
$$P_6=[0,t,t+1,1\stackrel{+1}{\rightharpoondown}4y+2\stackrel{+2}{\rightharpoondown}4u-2,4u-1\stackrel{-2}{\rightharpoondown}4y+3].$$

The reader can check that, reasoning as in Case 1, it is possible to construct
a linear realization of $L$ for $c=1$ and $c=3$ starting from the realization
$P_5$ and $P_6$, respectively.

Also here, to solve the case $t=4u-2\geq6$, we start from the realizations
of $L$ when $t=4u\geq 8$. In fact, in order to obtain a linear realization of
$\{1^{4y+3},2^{t-4y-4}\}$ when  $t=4u-2\geq 6$, it is sufficient to replace in
$P_5$ the subpath $[4u-4,4u-2,4u-1,4u-3]$ with $[4u-4,4u-3]$.

Analogously, in order to obtain a linear realization of
$\{1^{4y+3},2^{t-4y-4},t^{2}\}$ when  $t=4u-2\geq 6$, it is sufficient to replace in
$P_6$ the subpath $[4u-4,4u-2,4u-1,4u-3]$ with $[4u-4,4u-3]$.

As before, if $c=1$  a $rL$ can be obtained changing the signs
of all $1$'s and $2$'s used in
 the case $c=0$. Finally, if $c=3$, a linear
 realization of $L$ is
 $$P_7=[0,t,t-1,t-3,t-2,t-4\stackrel{-1}{\rightharpoondown}t-4y-4\stackrel{-2}{\rightharpoondown} 2,t+2,t+1,1\stackrel{+2}{\rightharpoondown}t-4y-5].$$

\medskip
\noindent \emph{Case 4.} Let $a \equiv 1 \pmod4$.\\
We start again with the case $t\equiv0\pmod4$, set $t=4u\geq 8$.
Since $a\geq2$, we can assume $a=4y+5$ with $0\leq
y\leq u-2$.\\
From the linear realization $P_5$ of $\{1^{4y+3},2^{t-4y-4}\}$
we can obtain a linear
realization of $\{1^{4y+5},2^{t-4y-6}\}$. It suffices to replace
in $P_5$ the subpath $[4y+1,4y+3,4y+2,4y+4]$ with $[4y+1\stackrel{+1}{\rightharpoondown}4y+4]$.

Moreover, if $c=2$, a linear
realization of $\{1^{4y+5},2^{t-4y-6},t^{2}\}$
is
$$P_8=[0,t,t+1,1\stackrel{+1}{\rightharpoondown}4y+2,4y+4,4y+3,4y+5,4y+6\stackrel{+2}{\rightharpoondown}4u-2,4u-1\stackrel{-2}{\rightharpoondown}4y+7].$$
Also in this case the reader can check that it is possible
to obtain a linear realization of $\{1^{4y+5},2^{t-4y-6},t^{c}\}$
for $c=1$ and $c=3$ changing the signs of the 1's and 2's used to construct the realization
when $c=0$ and $c=2$, respectively.

Also here, to solve the case $t=4u-2\geq 10$, we start from the realizations
of $L$ when $t=4u\geq12$. In fact, in order to obtain a linear realization of
$\{1^{4y+5},2^{t-4y-6}\}$ when  $t=4u-2\geq10$, it suffices to replace in $r\{1^{4y+5},2^{t-4y-6}\}$ constructed for $t=4u$ the subpath $[4u-4,4u-2,4u-1,4u-3]$ with $[4u-4,4u-3]$.

Analogously, in order to obtain a linear realization of
$\{1^{4y+5},2^{t-4y-6},t^{2}\}$ when  $t=4u-2\geq 10$, it suffices to replace in
$P_8$ the subpath $[4u-4,4u-2,4u-1,4u-3]$ with $[4u-4,4u-3]$.

As before, if $c=1$ a $rL$ can be obtained changing the signs
of all $1$'s and $2$'s
used in the case $c=0$. Finally, if $c=3$, it suffices to replace in
$P_7$ the subpath $[t-1,t-3,t-2,t-4]$ with $[t-1\stackrel{-1}{\rightharpoondown}t-4]$.
\end{proof}

Now we can prove the main result of this paper, that is Theorem \ref{main}.

\begin{proof}[Proof of Theorem \emph{\ref{main}}]
We prove the theorem using the linear realizations constructed in Sections \ref{3}
and \ref{4}. Observe that since $t$ appears in $L=\{1^a,2^b,t^c\}$, we have $|L|\geq 2t-1$
and so we
can apply Remark \ref{cyclin}.

\smallskip
Let $t\equiv 0 \pmod 4$. Combining Propositions \ref{c even} and  \ref{t4a1} with Remark
\ref{RemA>a}, we obtain a linear realization for every list $L$ with
$a,c\geq 1$ and $b\geq t-2$.
If $1\leq b\leq t-3$, then $a\geq2$ and hence the thesis follows from Proposition
\ref{a+b=t-1}
and Remark \ref{RemA>a}.\\
Now, let $t\equiv 2 \pmod 4$. Write $c=tk+d$ with $k\geq 0$ and $0\leq d< t$.
Combining Propositions \ref{c even} and  \ref{t2a1} with
Remark
\ref{RemA>a}, we obtain a linear realization for every list $\{1^a,2^b,t^{tk+d}\}$ with
$a\geq 1$, $b\geq t-2$, $k\geq 0$  and  $d\equiv 0,3\pmod 4$.

\smallskip
Assume $d\equiv 1,2\pmod4$.
If $b\geq t-1$ proceed as
before. When $a=1$ and $b=t-2$ we have a cyclic realization, see Remark \ref{cyclict2a1}.
If $a=2$, we have the following $t^{t-6}$-extendable linear realizations:
$$\left.\begin{array}{rl}
S_1\{1^2,2^{t-2},t\}=\hspace{-0.3cm} & [0, t, t+1 \stackrel{-2}{\rightharpoondown} 1, 2 \stackrel{+2}{\rightharpoondown} t-2],\\
S_1\{1^2,2^{t-2},t^{2}\}=\hspace{-0.3cm} & [0,2, t+2, t+1,
1\stackrel{+2}{\rightharpoondown} t-1, t \stackrel{-2}{\rightharpoondown} 4].
\end{array}
\right.$$
Hence by Lemmas \ref{c,c+4w} and \ref{senzak}(ii) we have a linear realization of
$\{1^2,2^{t-2},t^{tk+d}\}$ and applying Remark \ref{RemA>a} we have a linear
realization of
$\{1^a,2^{t-2},t^{tk+d}\}$
for any $a\geq2$.
Also in this case, if $1\leq b\leq t-3$, then $a\geq2$ and hence the thesis follows from
Proposition \ref{a+b=t-1}
and Remark \ref{RemA>a}.
\end{proof}

\section{A complete solution for $t=4,6,8$}

In view of Theorem \ref{main}, in order to prove $\BHR(\{1^a,2^b,t^c \})$ for
all $a,b,c\geq 1$ and $t$ even, we are left to construct cyclic realizations only when
$a+b\leq t-2$. As examples, we prove $\BHR(\{1^a,2^b,4^c \})$, $\BHR(\{1^a,2^b,6^c
 \})$ and $\BHR(\{1^a,2^b,8^c
 \})$.

\begin{prop}
$\BHR(\{1^a, 2^b, 4^c\})$ holds for all $a,b,c\geq 1$.
\end{prop}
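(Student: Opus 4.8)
The plan is to reduce to a single new family and then split the analysis into an impossibility part and a constructive part. By Theorem~\ref{main} together with Remark~\ref{RemA>a}, the assertion $\BHR(\{1^a,2^b,4^c\})$ is already known whenever $a+b\ge 3$; since $a,b\ge 1$, the only case not covered is $a=b=1$. Thus it suffices to settle $\BHR(L)$ for $L=\{1,2,4^c\}$, where $v=|L|+1=c+3$. Throughout I assume $4\le\lfloor v/2\rfloor$, i.e. $c\ge 5$, so that $L$ is an admissible list (for $c\le 4$ the length $4$ exceeds $\lfloor v/2\rfloor$ and $L$ is not a legitimate input).

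Next I would analyse condition~(\ref{B}) for $L=\{1,2,4^c\}$. The only divisors $d$ of $v$ admitting multiples of $d$ in $L$ are $d\in\{1,2,4\}$: for $d=1$ the bound reads $c+2\le v-1=c+2$, and for $d=2$ (which divides $v$ exactly when $c$ is odd) it reads $c+1\le v-2=c+1$, both holding with equality. The divisor $d=4$ divides $v$ precisely when $c\equiv 1\pmod 4$, and then the bound becomes $c\le v-4=c-1$, which fails. Hence~(\ref{B}) holds if and only if $c\not\equiv 1\pmod 4$. For $c\equiv 1\pmod 4$ I would prove impossibility directly, arguing as in the lemma bounding $a_i+i-1$: deleting from any putative realization all edges whose length is not a multiple of $4$ leaves a subgraph whose components lie in the residue classes modulo $4$, hence at least $4$ components and at most $v-4=c-1$ surviving edges, contradicting the presence of $c$ edges of length $4$. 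Thus no realization exists and $\BHR(L)$ holds vacuously in this case.

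It then remains to construct cyclic realizations of $\{1,2,4^c\}$ for $c\ge 6$ with $c\not\equiv 1\pmod 4$. The idea is to traverse $\Z_v$ almost entirely by steps of $\pm 4$, inserting a single step of length $1$ and a single step of length $2$; since the subgroup generated by $4$ equals $\langle\gcd(4,v)\rangle$, the behaviour depends on the parity of $v$. When $v$ is odd (that is, $c\equiv 0,2\pmod 4$) steps of $\pm 4$ already generate all of $\Z_v$, so a single local detour using the two short edges suffices; representative base cases are $[0,2,6,1,5,4,8,3,7]$ for $\{1,2,4^6\}$ in $K_9$ and $[0,4,8,9,2,6,10,3,7,5,1]$ for $\{1,2,4^8\}$ in $K_{11}$. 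When $v\equiv 2\pmod 4$ (that is, $c\equiv 3\pmod 4$) the subgroup $\langle 4\rangle$ consists only of the even residues, so the length-$1$ edge must be spent to cross parity once while the length-$2$ edge trims one superfluous step of $4$; a base case is $[0,4,8,6,2,1,5,9,3,7]$ for $\{1,2,4^7\}$ in $K_{10}$. For each residue class I would present the corresponding one-parameter family whose smallest member is the path above. The main obstacle is precisely this constructive step: producing a generic parametric path and checking both Hamiltonicity and that its length multiset is exactly $\{1,2,4^c\}$ under the cyclic length $\min(|x-y|,v-|x-y|)$; the case $v\equiv 2\pmod 4$ is the most delicate, since there $\pm 4$ fails to generate $\Z_v$ and the two short edges carry the whole burden of connecting the cosets.
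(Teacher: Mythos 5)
Your reduction and your case analysis are sound, and they follow the same outline as the paper: Theorem~\ref{main} disposes of $a+b\geq 3$, leaving only $a=b=1$; the admissibility bound $c\geq 5$ is right; your computation that condition~(\ref{B}) holds for $L=\{1,2,4^c\}$ exactly when $c\not\equiv 1\pmod 4$ is correct; and your non-existence argument for $c\equiv 1\pmod 4$ (delete the two short edges, leaving a forest with $v-3$ edges whose components each lie in one of the four nonempty residue classes modulo $4$) is valid --- the paper leaves this direction implicit, since it is the standard easy half of the conjecture, so making it explicit is harmless. I also checked your three sample paths for $c=6,7,8$: all are genuine cyclic realizations.

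There is, however, a genuine gap, and you name it yourself: the proof stops exactly where the real work begins. You never construct cyclic realizations of $\{1,2,4^c\}$ for general $c\geq 6$ with $c\not\equiv 1\pmod 4$; you give the three smallest members and a heuristic (based on whether $\langle 4\rangle$ equals $\Z_v$ or only the even residues) for why a construction should exist in each residue class. A heuristic plus three examples does not prove an infinite family of statements. This missing step is precisely the content of the paper's proof, which exhibits three explicit one-parameter families, e.g.\ $c\{1,2,4^{4k+6}\}=[\,0\rightarrow 4k+8,\ 3\rightarrow 4k+7,\ 4k+5\rightarrow 1,\ 2\rightarrow 4k+6\,]$ in $K_{4k+9}$, together with analogous paths for $c=4k+7$ and $c=4k+8$, where $\rightarrow$ denotes an arithmetic progression with step $\pm 4$. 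The key device, which your base cases use implicitly but your plan never addresses, is that the junctions between the arithmetic blocks (such as the step from $4k+8$ to $3$, a difference of $4k+5=v-4$) are wrap-around edges of \emph{cyclic} length $4$; organizing these wrap-arounds so that every junction has length in $\{1,2,4\}$, for all $k$ simultaneously, is what a complete proof must write down and verify. Until you produce such parametric paths and check Hamiltonicity and the edge-length multiset for every $k\geq 0$, the proposition is not proved.
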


\begin{proof}
Firstly, observe that since $4$ appears as edge-length, then $v=a+b+c+1\geq 8$.
Furthermore, the assumptions $a,b,c\geq 1$ reduce the conjecture to prove that $L=\{1^a,
2^b, 4^c\}$ admits a cyclic realization except when $4$ divides $v$ and $a+b=2$.
If $a+b\geq 3$, Remark \ref{cyclin} and Theorem \ref{main}  imply the
existence of such  a realization of $L$. For
$a+b=2$ (i.e., $a=b=1$) we have
\begin{footnotesize}
$$\left.\begin{array}{rl}
c\{1,2,4^{4k+6}\}=\hspace{-0.3cm} & [ 0\rightarrow 4k+8, 3\rightarrow 4k+7,
4k+5\rightarrow 1, 2\rightarrow 4k+6 ], \\
c\{1,2,4^{4k+7}\}=\hspace{-0.3cm} & [ 0\rightarrow 4k+8, 2\rightarrow 4k+6,
4k+7\rightarrow 3, 1\rightarrow 4k+9 ], \\
c\{1,2,4^{4k+8}\}=\hspace{-0.3cm} & [ 0\rightarrow 4k+8, 1\rightarrow 4k+9,
4k+7\rightarrow 3, 2\rightarrow 4k+10 ]. \\
 \end{array}
\right.$$
\end{footnotesize}
\end{proof}

\begin{prop}
$\BHR(\{1^a, 2^b, 6^c\})$ holds for all $a,b,c\geq 1$.
\end{prop}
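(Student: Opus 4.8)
The plan is to follow the template of the preceding $t=4$ proposition, isolating the cases genuinely outside the range of Theorem~\ref{main} and dispatching them by explicit cyclic constructions. Since the length $6$ occurs in $L=\{1^a,2^b,6^c\}$, every admissible $v=a+b+c+1$ satisfies $6\le\lfloor v/2\rfloor$, so $v\ge 12$; in particular Remark~\ref{cyclin} always applies. First I would read off condition (\ref{B}) for $L$: among the lengths $1,2,6$ the only divisor of $v$ that can force a violation is $d=6$, and it does so precisely when $6\mid v$ and $c>v-6$, equivalently when $6\mid v$ and $a+b\le 4$. For every other $(a,b,c)$ condition (\ref{B}) holds. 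As the necessity of (\ref{B}) is the elementary direction, it suffices to produce a cyclic realization whenever (\ref{B}) holds.

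Next I split according to $a+b$. If $a+b\ge 5=t-1$, then Theorem~\ref{main} applies and gives the required Hamiltonian path directly, so that range is complete. There remains the regime $a+b\le 4$ with $6\nmid v$; since $a,b\ge1$ this leaves exactly the six pairs $(a,b)\in\{(1,1),(1,2),(2,1),(1,3),(3,1),(2,2)\}$. Writing $v=a+b+c+1$, the forbidden congruence $6\mid v$ depends only on $a+b$: it is $c\equiv 3\pmod 6$ for $a+b=2$, $c\equiv 2\pmod 6$ for $a+b=3$, and $c\equiv 1\pmod 6$ for $a+b=4$. Thus for each pair I must exhibit cyclic realizations for the five remaining residues of $c$.

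For the constructions I would mimic Remark~\ref{cyclict2a1} and the $t=4$ families, exploiting that, because $6\nmid v$, the map $x\mapsto x+6$ on $\Z_v$ decomposes into $g=\gcd(6,v)\in\{1,2,3\}$ orbits. Each orbit is traversed by a maximal run of length-$6$ steps (the ``$\rightarrow$'' arrows, now genuinely cyclic through $0$), and the orbits are stitched into a single Hamiltonian path by inserting the prescribed $1$'s and $2$'s as the crossing edges; the excluded residue is exactly the case $6\mid v$, where $g=6$ and (\ref{B}) fails. Concretely I would write $c=6k+d$ and, for each pair $(a,b)$ and each allowed $d$, give a $k$-parametrised family, handling separately the finitely many smallest values of $c$ (those with $v$ just above $12$) that the generic pattern may not reach, exactly as the three sporadic families $c\{1,2,4^{4k+6}\}$, $c\{1,2,4^{4k+7}\}$, $c\{1,2,4^{4k+8}\}$ were needed for $t=4$.

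The hard part will be the bookkeeping of exact multiplicities. Unlike the $a+b\ge t-1$ regime, here one cannot derive these realizations from linear ones via Remark~\ref{RemA>a}, since bumping preserves \emph{linear} lengths whereas the tight regime forces length-$6$ edges of large linear difference that only become length $6$ after wrapping around $\Z_v$; so each family must be built and verified directly as a cyclic realization. With only $a+b\le 4$ short edges available to connect up to three length-$6$ orbits and to realize precisely $a$ ones and $b$ twos, the delicate point is placing these few connectors so that the resulting sequence is a genuine permutation of $\Z_v$ whose cyclic edge-length multiset is exactly $L$, and then checking each of the thirty residue families together with its sporadic base cases. Everything else is the routine arithmetic-progression verification already illustrated in the previous proofs.
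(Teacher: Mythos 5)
Your reduction is exactly the paper's: you correctly note that $v\geq 12$, that condition (\ref{B}) can only fail at the divisor $d=6$ (hence precisely when $6\mid v$ and $a+b\leq 4$), that Theorem \ref{main} together with Remark \ref{cyclin} settles every case with $a+b\geq 5=t-1$, and that what remains are the six pairs $(a,b)$ with $a+b\leq 4$ and, for each, the five residues of $c$ modulo $6$ compatible with $6\nmid v$. Your congruence bookkeeping ($c\equiv 3,2,1 \pmod 6$ excluded for $a+b=2,3,4$ respectively) is also correct and matches the paper.

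But at that point your proof stops, and this is a genuine gap rather than a stylistic difference. The entire substance of the paper's argument for this proposition is the explicit list of thirty $k$-parametrised cyclic realizations (from $c\{1,2,6^{6k+10}\}$ through $c\{1^3,2,6^{6k+12}\}$), each written out and checkable as a Hamiltonian path whose cyclic edge-length multiset is exactly $L$. You describe how one \emph{would} build them --- stitching the $\gcd(6,v)$ orbits of $x\mapsto x+6$ together using the few available $1$'s and $2$'s --- and you correctly identify this placement as the delicate point, but you do not produce or verify a single one of the thirty families. As you yourself observe, in the tight regime $a+b\leq t-2$ these realizations cannot be derived from Remark \ref{RemA>a} or the linear machinery of Sections 3 and 4, so there is no lemma to fall back on: the constructions \emph{are} the proof, and promising them is not supplying them. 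A minor further inaccuracy: the three families $c\{1,2,4^{4k+6}\}$, $c\{1,2,4^{4k+7}\}$, $c\{1,2,4^{4k+8}\}$ in the $t=4$ proposition are not sporadic small-case patches; they are the generic families themselves, one for each allowed residue of $c$ modulo $4$, which is precisely the kind of object your proposal still owes for $t=6$.
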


\begin{proof}
Firstly, observe that since $6$ appears as edge-length, then $v=a+b+c+1\geq 12$.
Furthermore, the assumptions $a,b,c\geq 1$ reduce the conjecture to prove that $L=\{1^a,
2^b, 6^c\}$ admits a cyclic realization except when $6$ divides $v$ and
$a+b\leq 4$.
If $a+b\geq 5$, Remark \ref{cyclin} and Theorem \ref{main} imply the existence of such a
realization of $L$. For
$a+b\leq 4$  we have
\begin{footnotesize}
$$\left.\begin{array}{rl}
c\{1,2,6^{6k+10}\}=\hspace{-0.3cm} &[0\rightarrow 6k+12, 5\rightarrow 6k+11,
4\rightarrow 6k+10, 3\rightarrow 6k+9, 6k+7\rightarrow 1, 2\rightarrow 6k+8],\\
c\{1,2,6^{6k+11}\}=\hspace{-0.3cm} & [0\rightarrow 6k+12,  4\rightarrow 6k+10,
2\rightarrow 6k+8, 6k+9\rightarrow 3, 1\rightarrow 6k+13, 5\rightarrow 6k+11],\\
c\{1,2,6^{6k+12}\}=\hspace{-0.3cm} &[0\rightarrow 6k+12, 3\rightarrow 6k+9,
6k+10\rightarrow 4, 6k+13\rightarrow 1, 6k+14\rightarrow 2, 6k+11\rightarrow 5],\\
c\{1,2,6^{6k+13}\}=\hspace{-0.3cm} &[0\rightarrow 6k+12, 2\rightarrow 6k+14, 4\rightarrow
6k+10, 6k+11\rightarrow 5, 6k+15\rightarrow 3, 1\rightarrow 6k+13],\\
c\{1,2,6^{6k+14}\}=\hspace{-0.3cm} & [ 0\rightarrow 6k+12, 1\rightarrow 6k+13,
6k+11\rightarrow 5, 6k+16\rightarrow 4, 6k+15\rightarrow 3, 2\rightarrow 6k+14 ],\\[4pt]

c\{1,2^2,6^{6k+9}\}=\hspace{-0.3cm} &  [0\rightarrow 6k+12, 5\rightarrow 6k+11,
4\rightarrow 6k+10, 6k+8\rightarrow 2, 1\rightarrow 6k+7, 6k+9\rightarrow 3],\\
c\{1,2^2,6^{6k+10}\} =\hspace{-0.3cm} & [0\rightarrow 6k+12, 4\rightarrow 6k+10,
2\rightarrow 6k+8, 6k+9\rightarrow 3, 1\rightarrow 6k+13, 6k+11\rightarrow 5 ],\\
c\{1,2^2,6^{6k+11}\}  =\hspace{-0.3cm} &[0\rightarrow 6k+12, 3\rightarrow 6k+9,
6k+10\rightarrow 4, 2\rightarrow 6k+14, 5\rightarrow 6k+11, 6k+13\rightarrow 1 ],\\
c\{1,2^2,6^{6k+12}\} =\hspace{-0.3cm} &[0\rightarrow 6k+12,  2\rightarrow 6k+14,
4\rightarrow 6k+10, 6k+11\rightarrow 5, 3\rightarrow 6k+15,  1\rightarrow 6k+13],\\
c\{1,2^2,6^{6k+13}\} =\hspace{-0.3cm} &[0\rightarrow 6k+12, 1\rightarrow 6k+13,
6k+11\rightarrow 5, 6k+16\rightarrow 4, 2\rightarrow 6k+14, 6k+15\rightarrow 3 ],\\[4pt]

c\{1,2^3,6^{6k+8}\}=\hspace{-0.3cm} &[0\rightarrow 6k+12, 5\rightarrow 6k+11,
6k+9\rightarrow 3, 1\rightarrow 6k+7, 6k+8\rightarrow 2,4\rightarrow 6k+10 ],\\
c\{1,2^3,6^{6k+9}\}=\hspace{-0.3cm} &[0\rightarrow 6k+12, 4\rightarrow 6k+10,
6k+8\rightarrow 2, 1\rightarrow 6k+13, 6k+11\rightarrow 5, 3\rightarrow 6k+9 ],\\
c\{1,2^3,6^{6k+10}\}=\hspace{-0.3cm} &[0\rightarrow 6k+12, 3\rightarrow 6k+9,
6k+10\rightarrow 4, 2\rightarrow 6k+14, 1\rightarrow 6k+13, 6k+11\rightarrow 5 ],\\
c\{1,2^3,6^{6k+11}\}=\hspace{-0.3cm} &[0\rightarrow 6k+12, 2\rightarrow 6k+14,
4\rightarrow 6k+10, 6k+11\rightarrow 5, 7\rightarrow 6k+13, 6k+15\rightarrow 3, 1 ],\\
c\{1,2^3,6^{6k+12}\}=\hspace{-0.3cm} &[0\rightarrow 6k+12, 1\rightarrow 6k+13,
6k+11\rightarrow 5, 3\rightarrow 6k+15, 6k+14\rightarrow 2, 4\rightarrow 6k+16 ],\\[4pt]

c\{1^2,2,6^{6k+9}\} =\hspace{-0.3cm} &[0\rightarrow 6k+12,  5\rightarrow 6k+11,
4\rightarrow 6k+10, 6k+9\rightarrow 3, 1\rightarrow 6k+7, 6k+8\rightarrow 2],\\
c\{1^2,2,6^{6k+10}\}=\hspace{-0.3cm} &[0\rightarrow 6k+12, 4\rightarrow 6k+10,
6k+11\rightarrow 5, 6k+13\rightarrow 1,  3\rightarrow 6k+9, 6k+8\rightarrow 2 ],\\
c\{1^2,2,6^{6k+11}\}=\hspace{-0.3cm} &[0\rightarrow 6k+12, 3\rightarrow 6k+9,
6k+11\rightarrow 5, 4\rightarrow 6k+10, 1\rightarrow 6k+13, 16k+14\rightarrow 2],\\
c\{1^2,2,6^{6k+12}\}=\hspace{-0.3cm} &[0\rightarrow 6k+12, 2\rightarrow 6k+14,
6k+13\rightarrow 1, 3\rightarrow 6k+15, 5\rightarrow 6k+11, 6k+10\rightarrow 4],\\
c\{1^2,2,6^{6k+13}\}=\hspace{-0.3cm} &[0\rightarrow 6k+12, 1\rightarrow 6k+13,
6k+11\rightarrow 5, 4\rightarrow 6k+16, 6k+15\rightarrow 3, 6k+14\rightarrow 2 ],\\[4pt]

c\{1^2,2^2,6^{6k+8}\}=\hspace{-0.3cm} &[0\rightarrow 6k+12,  5\rightarrow 6k+11,
6k+9\rightarrow 3, 4\rightarrow 6k+10, 6k+8\rightarrow 2, 1\rightarrow 6k+7 ],\\
c\{1^2,2^2,6^{6k+9}\}=\hspace{-0.3cm} &[0\rightarrow 6k+12, 4\rightarrow 6k+10,
6k+9\rightarrow 3, 5\rightarrow 6k+11, 6k+13\rightarrow 1, 2\rightarrow 6k+8],\\
c\{1^2,2^2,6^{6k+10}\}=\hspace{-0.3cm} &[0\rightarrow 6k+12, 3\rightarrow 6k+9,
6k+10\rightarrow 4, 5\rightarrow 6k+11, 6k+13\rightarrow 1, 6k+14\rightarrow 2],\\
%
c\{1^2,2^2,6^{6k+11}\}=\hspace{-0.3cm} & [0\rightarrow 6k+12, 2\rightarrow 6k+14,
6k+15\rightarrow 3, 1\rightarrow 6k+13, 6k+11\rightarrow 5, 4\rightarrow 6k+10 ],\\
c\{1^2,2^2,6^{6k+12}\}=\hspace{-0.3cm} &[0\rightarrow 6k+12,  1\rightarrow 6k+13,
6k+14\rightarrow 2, 4\rightarrow 6k+16, 6k+15\rightarrow 3, 5\rightarrow 6k+11 ],\\[4pt]

c\{1^3,2,6^{6k+8}\}=\hspace{-0.3cm} &[0\rightarrow 6k+12, 5\rightarrow 6k+11,
6k+10\rightarrow 4, 3\rightarrow 6k+9, 6k+7\rightarrow 1, 2\rightarrow 6k+8 ],\\
c\{1^3,2,6^{ 6k+9}\}=\hspace{-0.3cm} & [0\rightarrow 6k+12, 4\rightarrow 6k+10,
6k+11\rightarrow 5, 3\rightarrow 6k+9, 6k+8\rightarrow 2, 1\rightarrow 6k+13],\\
c\{1^3,2,6^{6k+10}\}=\hspace{-0.3cm} &[0\rightarrow 6k+12, 3\rightarrow 6k+9,
6k+10\rightarrow 4, 5\rightarrow 6k+11, 6k+13\rightarrow 1, 2\rightarrow 6k+14 ],\\
c\{1^3,2,6^{6k+11}\}=\hspace{-0.3cm} &[0\rightarrow 6k+12,  2\rightarrow 6k+14,
6k+13\rightarrow 1, 6k+15\rightarrow 3, 4\rightarrow 6k+10, 6k+11\rightarrow 5 ],\\

c\{1^3,2,6^{6k+12}\}=\hspace{-0.3cm} &[0\rightarrow 6k+12, 1\rightarrow 6k+13,
6k+11\rightarrow 5, 4\rightarrow 6k+16, 6k+15\rightarrow 3, 2\rightarrow 6k+14 ].\\
\end{array}
\right.$$
\end{footnotesize}
\end{proof}

\begin{prop}
$\BHR(\{1^a, 2^b, 8^c\})$ holds for all $a,b,c\geq 1$.
\end{prop}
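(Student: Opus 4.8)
The plan is to follow the template set by the two preceding propositions (the cases $t=4$ and $t=6$) and reduce everything to the construction of explicit \emph{cyclic} realizations. First I would record that, since $8$ occurs as an edge-length, every admissible list $L=\{1^a,2^b,8^c\}$ has $v=a+b+c+1\geq 16$; in particular $8\leq\lfloor v/2\rfloor$, so by Remark \ref{cyclin} a cyclic realization of $L$ in $K_v$ is exactly a Hamiltonian path all of whose consecutive differences are $\pm1,\pm2,\pm8\pmod v$. Next I would pin down when condition $(\ref{B})$ holds. The only divisors of $v$ having multiples in $\{1,2,8\}$ are $d\in\{1,2,4,8\}$: for $d=1$ the inequality is always tight, $d=2$ gives the automatic constraint $a\geq1$, $d=4$ gives $a+b\geq3$, and $d=8$ gives $a+b\geq7$. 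Hence $(\ref{B})$ fails exactly when $8\mid v$ and $a+b\leq6$, or when $4\mid v$, $8\nmid v$ and $a+b=2$; in every other case a realization must be produced.

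For $a+b\geq7$ I would simply invoke Theorem \ref{main}: it supplies a linear realization, which by Remark \ref{cyclin} is cyclic, and for such $a+b$ condition $(\ref{B})$ is automatically satisfied. The substance of the proof is therefore the regime $a+b\leq6=t-2$, where no linear realization can exist at all: the bound $a_i+i-1\leq|L|$ applied to $i=8$ reads $c+7\leq a+b+c$, i.e.\ $a+b\geq7$ (cf.\ Remark \ref{a+b+c}), so for $a+b\leq6$ one is forced to use genuinely cyclic paths exploiting wrap-around. The governing picture is that the map $x\mapsto x+8$ partitions $\Z_v$ into $\gcd(v,8)$ orbits and a length-$8$ edge always stays inside an orbit; I would build a ``snake'' that sweeps each orbit by a run of $\pm8$ steps and passes from one orbit to the next by a single $\pm1$ or $\pm2$ step. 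Condition $(\ref{B})$ is precisely the statement that the $a+b$ available crossing-edges suffice to link the $\gcd(v,8)$ orbits — one orbit if $v$ is odd, two if $v\equiv2\pmod4$, four if $v\equiv4\pmod8$ — which is why $8\mid v$ (eight orbits) is irreparable for $a+b\leq6$.

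Concretely I would organize the constructions by the pair $(a,b)$ — the fifteen pairs with $2\leq a+b\leq6$ — and, for each, by the residue of $c$ modulo $8$, writing one parametric family $c\{1^a,2^b,8^{8k+r}\}$ for every admissible residue $r$, exactly as was done for $t=6$. For a fixed $(a,b)$ all residues are admissible except the one giving $v\equiv0\pmod8$, together with (only when $a+b=2$) the one giving $v\equiv4\pmod8$. I expect the main obstacle to be the sheer volume rather than any single hard idea: the two nested divisor constraints $d=4$ and $d=8$ carve the admissible region more finely than the single composite constraint governing the $t=4,6$ cases, and there are eight residues of $c$ modulo $8$ to cover rather than four or six, so the bookkeeping required to check that each listed snake visits every vertex of $\{0,\dots,v-1\}$ exactly once and uses precisely $a$ ones, $b$ twos and $c$ eights is long, even though each individual verification is entirely routine.
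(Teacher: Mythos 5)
Your reduction is exactly the paper's: you analyze condition (\ref{B}) correctly (it fails precisely when $8\mid v$ and $a+b\leq 6$, or when $4\mid v$ and $a=b=1$), you dispose of $a+b\geq 7$ via Theorem \ref{main} together with Remark \ref{cyclin}, and you correctly observe that for $a+b\leq 6$ no linear realization can exist, so genuinely cyclic (wrap-around) constructions are required, organized by the pair $(a,b)$ and the residue of $c$ modulo $8$. All of this matches the structure of the paper's proof.

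However, there is a genuine gap: you never produce the constructions. The entire content of the proposition beyond Theorem \ref{main} is the existence of a cyclic realization for each of the fifteen pairs $(a,b)$ with $2\leq a+b\leq 6$ and each admissible residue of $c$ modulo $8$ --- roughly a hundred parametric families --- and the paper's proof consists precisely of exhibiting them. Your ``snake'' heuristic (sweep each orbit of $x\mapsto x+8$ by runs of $\pm 8$, cross between orbits by $\pm1$ or $\pm2$ steps) is the right intuition, but your observation that condition (\ref{B}) numerically matches the orbit-connectivity requirement ($a+b\geq 3$ for four orbits, $a+b\geq 7$ for eight) is only a necessity check; it does not show that a Hamiltonian path using exactly $a$ ones, $b$ twos and $c$ eights exists in every admissible case. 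Indeed, the paper's realizations are not uniform snakes: most require ad hoc tails and irregular splicings (compare, e.g., $c\{1,2,8^{8k+20}\}$ or $c\{1,2^5,8^{8k+16}\}$ in the paper, whose final segments jump among several orbits in a non-obvious order), and these adjustments differ from one residue class and one pair $(a,b)$ to another; this is where essentially all the work of the proposition lies. Declaring that step ``entirely routine'' without exhibiting even a single family leaves the statement unproved: a complete argument must either list the realizations, as the paper does, or replace the heuristic by a general construction accompanied by a proof that it succeeds for all admissible $(a,b,c)$.
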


\begin{proof}
Firstly, observe that since $8$ appears as edge-length, then $v=a+b+c+1\geq 16$.
Furthermore, the assumptions $a,b,c\geq 1$ reduce the conjecture to prove that $L=\{1^a,
2^b, 8^c\}$ admits a cyclic realization except when either $4$ divides $v$ and
$a=b=1$ or  $8$ divides $v$ and $a+b\leq 6$.
If $a+b\geq 7$, Remark \ref{cyclin} and Theorem \ref{main} imply the existence of  such a
realization of $L$. For
$a+b\leq 6$  we have

\begin{footnotesize}
$$\left.\begin{array}{rl}
c\{1,2,8^{8k+14}\}=\hspace{-0.3cm} &[0\rightarrow 8k+16, 7\rightarrow 8k+15, 6\rightarrow
8k+14, 5\rightarrow 8k+13, 4\rightarrow 8k+12, 3\rightarrow 8k+11, \\
& 8k+9\rightarrow 1, 2\rightarrow 8k+10 ],\\

c\{1,2,8^{8k+15}\}=\hspace{-0.3cm} &[0\rightarrow 8k+16, 6\rightarrow 8k+14, 4\rightarrow
8k+12, 2\rightarrow 8k+10, 8k+11\rightarrow 3, 1\rightarrow 8k+17,\\
& 7\rightarrow 8k+15,
5\rightarrow 8k+13 ],\\

c\{1,2,8^{8k+16}\}=\hspace{-0.3cm} &[0\rightarrow 8k+16, 5\rightarrow 8k+13, 2\rightarrow
8k+18, 1\rightarrow 8k+17, 6\rightarrow 8k+14, 3\rightarrow 8k+11,\\
& 8k+12\rightarrow 4, 8k+15\rightarrow 7 ],\\

c\{1,2,8^{8k+18}\}=\hspace{-0.3cm} & [0\rightarrow 8k+16,  3\rightarrow 8k+19,
6\rightarrow 8k+14, 1\rightarrow 8k+17, 4\rightarrow 8k+12, 8k+13\rightarrow 5,  \\
& 8k+18, 8k+20, 7\rightarrow 8k+15, 2\rightarrow 8k+10 ],\\

c\{1,2,8^{8k+19}\}=\hspace{-0.3cm} &[0\rightarrow 8k+16, 2\rightarrow 8k+18, 4\rightarrow
8k+20, 6\rightarrow 8k+14, 8k+15\rightarrow 7, 8k+21\rightarrow 5,\\
& 8k+19\rightarrow 3,
1\rightarrow 8k+17 ],\\

c\{1,2,8^{8k+20}\}=\hspace{-0.3cm} &[0\rightarrow 8k+16, 1\rightarrow 8k+17,
2\rightarrow 8k+18, 3\rightarrow 8k+19, 4\rightarrow 8k+20, 5\rightarrow 8k+5,\\
& 8k+6\rightarrow 6, 8k+21, 8k+13, 8k+15\rightarrow 7, 8k+22, 8k+14
],\\[4pt]
\end{array}
\right.$$
\end{footnotesize}

\begin{footnotesize}
$$\left.\begin{array}{rl}
c\{1,2^2,8^{8k+13}\}=\hspace{-0.3cm} &[0\rightarrow 8k+16 ,7\rightarrow 8k+15 ,
6\rightarrow 8k+14 ,5\rightarrow 8k+13 ,4\rightarrow 8k+12, 8k+10\rightarrow \\
& 2, 3\rightarrow 8k+11, 8k+9\rightarrow 1 ],\\

c\{1,2^2,8^{8k+14}\}=\hspace{-0.3cm} &[0\rightarrow 8k+16 ,6\rightarrow 8k+14 ,
4\rightarrow 8k+12, 8k+10\rightarrow 2, 1 \rightarrow 8k+17 , 7\rightarrow 8k+15, \\
& 5 \rightarrow 8k+13, 8k+11\rightarrow 3],\\

c\{1,2^2,8^{8k+15}\}=\hspace{-0.3cm} &[0\rightarrow 8k+16 ,5\rightarrow 8k+13 ,
8k+15\rightarrow 7,  8k+18 \rightarrow 2 , 4\rightarrow 8k+12, 8k+11\rightarrow 3,\\
& 8k+14 \rightarrow 6 , 8k+17\rightarrow  1 ],\\

c\{1,2^2,8^{8k+16}\}=\hspace{-0.3cm} &[0\rightarrow 8k+16 , 4\rightarrow 8k+12,
8k+14\rightarrow 6 , 8k+18\rightarrow 2, 3\rightarrow 8k+19 , 7\rightarrow \\
& 8k+15, 8k+13\rightarrow 5 , 8k+17\rightarrow 1 ],\\

c\{1,2^2,8^{8k+17}\}=\hspace{-0.3cm} &[0\rightarrow 8k+16 , 3\rightarrow 8k+19 ,
6\rightarrow 8k+14, 8k+13\rightarrow 5, 7\rightarrow 8k+15 , 2\rightarrow \\
& 8k+18, 8k+20\rightarrow 4 , 8k+17\rightarrow 1 ],\\

c\{1,2^2,8^{8k+18}\}=\hspace{-0.3cm} &[0\rightarrow 8k+16 , 2\rightarrow 8k+18 ,
4\rightarrow 8k+20 , 6\rightarrow 8k+14, 8k+13\rightarrow 5, 7\rightarrow 8k+15 ,\\
& 1\rightarrow 8k+17 , 3\rightarrow 8k+19, 8k+21 ],\\

c\{1,2^2,8^{8k+19}\}=\hspace{-0.3cm} &[0\rightarrow 8k+16 , 1\rightarrow 8k+17 ,
2\rightarrow 8k+18 , 3\rightarrow 8k+19 , 4\rightarrow 8k+20, 8k+21 , \\
& 6\rightarrow
8k+22, 7, 5\rightarrow 8k+13, 8k+15\rightarrow 15 ],\\[4pt]

c\{1,2^3,8^{8k+12}\}=\hspace{-0.3cm} &[0\rightarrow 8k+16 ,  7\rightarrow 8k+15 ,
6\rightarrow 8k+14 ,  5\rightarrow 8k+13 ,
8k+11 \rightarrow 3, 1\rightarrow 8k+9, \\
& 8k+10 \rightarrow 2, 4\rightarrow 8k+12],\\

c\{1,2^3,8^{8k+13}\}=\hspace{-0.3cm} &[0\rightarrow 8k+16 ,  6\rightarrow 8k+14 ,
4\rightarrow 8k+12 ,  2\rightarrow 8k+10 ,  8k+11\rightarrow 3,
1\rightarrow 8k+17 , \\
& 8k+15\rightarrow 7, 5\rightarrow 8k+13 ],\\

c\{1,2^3,8^{8k+14}\}=\hspace{-0.3cm} &[0\rightarrow 8k+16 ,  5\rightarrow 8k+13 ,
2\rightarrow 8k+18 ,  7\rightarrow 8k+15, 8k+17\rightarrow 1,
3\rightarrow 8k+11, \\
& 8k+12\rightarrow 4, 6\rightarrow 8k+14 ],\\

c\{1,2^3,8^{8k+15}\}=\hspace{-0.3cm} &[0\rightarrow 8k+16 ,  4\rightarrow 8k+12,
8k+14\rightarrow 6 ,  8k+18\rightarrow 2, 1\rightarrow 8k+17,
8k+19\rightarrow 3 ,\\
& 8k+15\rightarrow 7, 5\rightarrow 8k+13 ],\\

c\{1,2^3,8^{8k+16}\}=\hspace{-0.3cm} &[0\rightarrow 8k+16 ,  3\rightarrow 8k+19 ,
6\rightarrow 8k+14 ,  1\rightarrow 8k+17, 8k+18\rightarrow 2,
4\rightarrow 8k+20 , \\
& 7, 5\rightarrow 8k+13, 8k+15\rightarrow 15 ],\\

c\{1,2^3,8^{8k+17}\}=\hspace{-0.3cm} &[0\rightarrow 8k+16 ,  2\rightarrow 8k+18 ,
4\rightarrow 8k+20 ,  6\rightarrow 8k+14, 8k+13\rightarrow 5 ,
8k+19,\\
& 8k+21 ,  7\rightarrow 8k+15, 8k+17\rightarrow 1, 3\rightarrow 8k+11 ],\\

c\{1,2^3,8^{8k+18}\}=\hspace{-0.3cm} &[0\rightarrow 8k+16 ,  1\rightarrow 8k+17 ,
2\rightarrow 8k+18 ,  3\rightarrow 8k+19, 4\rightarrow 8k+20,
8k+22 ,  7, \\
& 5, 13, 15\rightarrow 8k+15, 8k+14\rightarrow 6 ,  8k+21\rightarrow
21 ],\\[4pt]

c\{1,2^4,8^{8k+11}\}=\hspace{-0.3cm} &[0\rightarrow 8k+16 ,  7\rightarrow 8k+15 ,
6\rightarrow 8k+14 ,  8k+12\rightarrow 4 , 2\rightarrow 8k+10, 8k+9\rightarrow 1,\\
&  3\rightarrow 8k+11, 8k+13\rightarrow 5 ],\\

c\{1,2^4,8^{8k+12}\}=\hspace{-0.3cm} &[0\rightarrow 8k+16 ,  6\rightarrow 8k+14 ,
4\rightarrow 8k+12 ,  8k+10\rightarrow 2, 1\rightarrow 8k+17 ,  8k+15\rightarrow
7, \\
& 5\rightarrow 8k+13, 8k+11\rightarrow 3 ],\\

c\{1,2^4,8^{8k+13}\}=\hspace{-0.3cm} &[0\rightarrow 8k+16 ,  5\rightarrow 8k+13 ,
2\rightarrow 8k+18 ,  7\rightarrow 8k+15, 8k+17\rightarrow 1, 3\rightarrow 8k+11, \\
& 8k+12,8k+14\rightarrow 6, 4\rightarrow 8k+4 ],\\

c\{1,2^4,8^{8k+14}\}=\hspace{-0.3cm} &[0\rightarrow 8k+16 ,  4\rightarrow 8k+12,
8k+14\rightarrow 6 ,  8k+18\rightarrow 2, 1\rightarrow 8k+17, 8k+19 ,\\
& 7\rightarrow 8k+15, 8k+13\rightarrow 5, 3\rightarrow 8k+11 ],\\

c\{1,2^4,8^{8k+15}\}=\hspace{-0.3cm} &[0\rightarrow 8k+16 ,  3\rightarrow 8k+19 ,
6\rightarrow 8k+14 ,  1\rightarrow 8k+17, 8k+15\rightarrow 7, 5\rightarrow 8k+13,\\
& 8k+12\rightarrow 4, 2\rightarrow 8k+18, 8k+20 ],\\

c\{1,2^4,8^{8k+16}\}=\hspace{-0.3cm} &[0\rightarrow 8k+16 ,  2\rightarrow 8k+18 ,
4\rightarrow 8k+20 ,  6\rightarrow 8k+14, 8k+13\rightarrow 5, 7\rightarrow 8k+15 , \\
& 1\rightarrow 8k+9, 8k+11\rightarrow 3 ,  8k+17, 8k+19, 8k+21 ],\\

c\{1,2^4,8^{8k+17}\}=\hspace{-0.3cm} &[0\rightarrow 8k+16 ,  1\rightarrow 8k+17 ,
2\rightarrow 8k+18, 8k+20\rightarrow 4, 3\rightarrow 8k+19, 8k+21,\\
& 6\rightarrow 8k+22 , 7, 5\rightarrow 8k+13, 8k+15\rightarrow 15 ],\\[4pt]

c\{1,2^5,8^{8k+10}\}=\hspace{-0.3cm} &[0\rightarrow 8k+16 ,  7\rightarrow 8k+15 ,
6\rightarrow 8k+14, 8k+12\rightarrow 4, 2\rightarrow 8k+10, 8k+9\rightarrow 1,\\
& 3, 5\rightarrow 8k+13, 8k+11\rightarrow 11 ],\\

c\{1,2^5,8^{8k+11}\}=\hspace{-0.3cm} &[0\rightarrow 8k+16 ,  6\rightarrow 8k+14 ,
4\rightarrow 8k+12, 2\rightarrow 8k+10, 8k+9\rightarrow 1, 3\rightarrow 8k+11,\\
& 8k+13,8k+15, 8k+17, 7\rightarrow 8k+7, 8k+5\rightarrow 5 ],\\

c\{1,2^5,8^{8k+12}\}=\hspace{-0.3cm} &[0\rightarrow 8k+16 ,  5\rightarrow 8k+13 ,
2\rightarrow 8k+10, 8k+12\rightarrow 4, 6\rightarrow 8k+14 ,  3\rightarrow 8k+11,\\
& 8k+9\rightarrow 1 , 8k+18, 8k+17, 8k+15\rightarrow 7 ],\\

c\{1,2^5,8^{8k+13}\}=\hspace{-0.3cm} &[0\rightarrow 8k+16 ,  4\rightarrow 8k+12,
8k+14\rightarrow 6 ,  8k+18\rightarrow 2, 3\rightarrow 8k+11,8k+9\rightarrow 1 , \\
& 8k+13\rightarrow 5,7\rightarrow 8k+15, 8k+17, 8k+19 ],\\

c\{1,2^5,8^{8k+14}\}=\hspace{-0.3cm} &[0\rightarrow 8k+16 ,  3\rightarrow 8k+19 ,
6\rightarrow 8k+14 ,  1\rightarrow 8k+17, 4, 2\rightarrow 8k+10, \\
& 8k+12\rightarrow 12,13\rightarrow 8k+13, 8k+15\rightarrow 7, 5 ,  8k+18, 8k+20 ],\\

c\{1,2^5,8^{8k+15}\}=\hspace{-0.3cm} &[0\rightarrow 8k+16 ,  2\rightarrow 8k+18 ,
4\rightarrow 8k+20 ,  6\rightarrow 8k+14, 8k+15\rightarrow 7, 9\rightarrow 8k+17,\\
& 8k+19, 5\rightarrow 8k+13, 8k+11\rightarrow 3, 1 ,  8k+21 ],\\

c\{1,2^5,8^{8k+16}\}=\hspace{-0.3cm} &[0\rightarrow 8k+16 ,  1\rightarrow 8k+17 ,
2\rightarrow 8k+18, 8k+20\rightarrow 4, 6\rightarrow 8k+6, 8k+5\rightarrow
5,\\
& 3\rightarrow 8k+19, 8k+21, 8k+13, 8k+15\rightarrow 7, 8k+22, 8k+14
],\\[4pt]
\end{array}
\right.$$
\end{footnotesize}

\begin{footnotesize}
$$\left.\begin{array}{rl}

c\{1^2,2,8^{8k+13}\}=\hspace{-0.3cm} &[0\rightarrow 8k+16 ,  7\rightarrow 8k+15 ,
6\rightarrow 8k+14 ,  5\rightarrow 8k+13 ,  4\rightarrow 8k+12 ,  3\rightarrow 8k+11,\\
& 8k+9,8k+10\rightarrow 2, 1\rightarrow 8k+1 ],\\

c\{1^2,2,8^{8k+14}\}=\hspace{-0.3cm} &[0\rightarrow 8k+16 ,  6\rightarrow 8k+14 ,
4\rightarrow 8k+12, 8k+13\rightarrow 5 ,  8k+15\rightarrow 7 ,  8k+17\rightarrow 1,\\
& 3\rightarrow 8k+11, 8k+10\rightarrow 2 ],\\

c\{1^2,2,8^{8k+15}\}=\hspace{-0.3cm} &[0\rightarrow 8k+16 ,  5\rightarrow 8k+13,
8k+15\rightarrow 7 ,  8k+18\rightarrow 2, 1\rightarrow 8k+17 ,  6\rightarrow 8k+14 , \\
& 3\rightarrow 8k+11, 8k+12\rightarrow 4 ],\\

c\{1^2,2,8^{8k+16}\}=\hspace{-0.3cm} &[0\rightarrow 8k+16 ,  4\rightarrow 8k+12,
8k+13\rightarrow 5 ,  8k+17\rightarrow 1, 3\rightarrow 8k+19 ,  7\rightarrow 8k+15,\\
&8k+14\rightarrow 6,8k+18\rightarrow 2 ],\\

c\{1^2,2,8^{8k+17}\}=\hspace{-0.3cm} &[0\rightarrow 8k+16 ,  3\rightarrow 8k+19 ,
6\rightarrow 8k+14 ,  1\rightarrow 8k+17, 8k+18\rightarrow 2 ,  8k+15, \\
& 8k+13\rightarrow 5,4\rightarrow 8k+20, 7\rightarrow 8k+7 ],\\

c\{1^2,2,8^{8k+18}\}=\hspace{-0.3cm} &[0\rightarrow 8k+16 ,  2\rightarrow 8k+18 ,
8k+19\rightarrow 3 ,  8k+17\rightarrow 1 ,  8k+15\rightarrow 7 ,  8k+21\rightarrow 5,\\
& 4\rightarrow 8k+12, 8k+14\rightarrow 6, 8k+20 ],\\

c\{1^2,2,8^{8k+19}\}=\hspace{-0.3cm} &[0\rightarrow 8k+16 ,  1\rightarrow 8k+17 ,
2\rightarrow 8k+18 ,  3\rightarrow 8k+19 ,  4\rightarrow 8k+20, 8k+22 ,  \\
& 7\rightarrow 8k+15, 8k+14\rightarrow 6, 5\rightarrow 8k+21 ],\\[4pt]

c\{1^2,2^2,8^{8k+12}\}=\hspace{-0.3cm} &[0\rightarrow 8k+16 ,  7\rightarrow 8k+15 ,
6\rightarrow 8k+14 ,  5\rightarrow 8k+13, 8k+12\rightarrow 4, 2\rightarrow 8k+10,\\
& 8k+11\rightarrow 3, 1\rightarrow 8k+9 ],\\

c\{1^2,2^2,8^{8k+13}\}=\hspace{-0.3cm} &[0\rightarrow 8k+16 ,  6\rightarrow 8k+14 ,
4\rightarrow 8k+12, 8k+13\rightarrow 5, 8k+15\rightarrow 7 ,  8k+17, \\
& 1\rightarrow 8k+9, 8k+11\rightarrow 3, 2\rightarrow 8k+10 ],\\

c\{1^2,2^2,8^{8k+14}\}=\hspace{-0.3cm} &[0\rightarrow 8k+16 ,  5\rightarrow 8k+13,
8k+15\rightarrow 7 ,  8k+18\rightarrow 2, 1\rightarrow 8k+17 ,  6\rightarrow 8k+14,\\
& 8k+12\rightarrow 4, 3\rightarrow 8k+11 ],\\

c\{1^2,2^2,8^{8k+15}\}=\hspace{-0.3cm} &[0\rightarrow 8k+16 ,  4\rightarrow 8k+12,
8k+14\rightarrow 6, 5\rightarrow 8k+13 ,  1\rightarrow 8k+17, 8k+15\rightarrow 7 , \\
& 8k+19\rightarrow 3, 2\rightarrow 8k+18 ],\\

c\{1^2,2^2,8^{8k+16}\}=\hspace{-0.3cm} &[0\rightarrow 8k+16 ,  3\rightarrow 8k+19 ,
6\rightarrow 8k+14 ,  1\rightarrow 8k+17, 8k+18\rightarrow 10, 12\rightarrow 8k+20 , \\
& 7\rightarrow 8k+15, 2, 4, 5\rightarrow 8k+13 ],\\

c\{1^2,2^2,8^{8k+17}\}=\hspace{-0.3cm} &[0\rightarrow 8k+16 ,  2\rightarrow 8k+18 ,
4\rightarrow 8k+20, 8k+19\rightarrow 3 ,  8k+17\rightarrow 1, 8k+21 , \\
& 7\rightarrow 8k+15, 8k+13\rightarrow 5, 6\rightarrow 8k+14 ],\\

c\{1^2,2^2,8^{8k+18}\}=\hspace{-0.3cm} &[0\rightarrow 8k+16 ,  1\rightarrow 8k+17 ,
2\rightarrow 8k+18, 8k+20\rightarrow 4 ,  8k+19\rightarrow 3, 5, 6 , \\
& 8k+21\rightarrow  13,  14\rightarrow 8k+22 ,  7\rightarrow 8k+15 ],\\[4pt]

c\{1^2,2^3,8^{8k+11}\}=\hspace{-0.3cm} & [0\rightarrow 8k+16 ,  7\rightarrow 8k+15 ,
6\rightarrow 8k+14 ,  5\rightarrow 8k+13, 8k+12\rightarrow 4, 2\rightarrow 8k+10,\\
& 8k+11, 8k+9\rightarrow 1, 3\rightarrow 8k+3 ],\\

c\{1^2,2^3,8^{8k+12}\}=\hspace{-0.3cm} &[0\rightarrow 8k+16 ,  6\rightarrow 8k+14 ,
4\rightarrow 8k+12, 8k+13\rightarrow 5 ,  8k+15\rightarrow 7, 9\rightarrow 8k+17, \\
& 1, 3\rightarrow 8k+11, 8k+10\rightarrow 2 ],\\

c\{1^2,2^3,8^{8k+13}\}=\hspace{-0.3cm} &[0\rightarrow 8k+16 ,  5\rightarrow 8k+13,
8k+15\rightarrow 7 ,  8k+18\rightarrow 2, 1\rightarrow 8k+9, 8k+11\rightarrow 3,\\
& 4\rightarrow 8k+12, 8k+14\rightarrow 6 ,  8k+17 ],\\

c\{1^2,2^3,8^{8k+14}\}=\hspace{-0.3cm} &[0\rightarrow 8k+16 ,  4\rightarrow 8k+12,
8k+14\rightarrow 6, 5\rightarrow 8k+13, 8k+15\rightarrow 7 ,  8k+19\rightarrow 3,\\
& 1\rightarrow 8k+17, 8k+18\rightarrow 2 ],\\

c\{1^2,2^3,8^{8k+15}\}=\hspace{-0.3cm} &[0\rightarrow 8k+16 ,  3\rightarrow 8k+19,
8k+17\rightarrow 1, 2\rightarrow 8k+18 ,  5\rightarrow 8k+13, 8k+15, \\
& 8k+14\rightarrow 6, 4\rightarrow 8k+20, 7\rightarrow 8k+7 ],\\

c\{1^2,2^3,8^{8k+16}\}=\hspace{-0.3cm} &[0\rightarrow 8k+16 ,  2\rightarrow 8k+18 ,
4\rightarrow 8k+20, 8k+19\rightarrow 3, 5\rightarrow 8k+21, \\
& 1\rightarrow 8k+17, 8k+15\rightarrow 7,
6\rightarrow 8k+14 ],\\

c\{1^2,2^3,8^{8k+17}\}=\hspace{-0.3cm} &[0\rightarrow 8k+16 ,  1\rightarrow 8k+17,
8k+15\rightarrow 7, 5\rightarrow 8k+21, 8k+22\rightarrow 6, 4\rightarrow 8k+20,\\
& 8k+19\rightarrow 3 ,  8k+18\rightarrow 2 ],\\[4pt]

c\{1^2,2^4,8^{8k+10}\}=\hspace{-0.3cm} &[0\rightarrow 8k+16 ,  7\rightarrow 8k+15 ,
6\rightarrow 8k+14 ,  5\rightarrow 8k+13, 8k+11\rightarrow 3, 1, 2, \\
& 4\rightarrow 8k+12, 8k+10\rightarrow 10, 9\rightarrow 8k+9 ],\\

c\{1^2,2^4,8^{8k+11}\}=\hspace{-0.3cm} &[0\rightarrow 8k+16 ,  6\rightarrow 8k+14 ,
4\rightarrow 8k+12, 8k+13\rightarrow 5, 7\rightarrow 8k+15, 8k+17,\\
& 1\rightarrow 8k+9, 8k+11\rightarrow 3, 2\rightarrow 8k+10 ],\\

c\{1^2,2^4,8^{8k+12}\}=\hspace{-0.3cm} &[0\rightarrow 8k+16 ,  5\rightarrow 8k+13,
8k+15\rightarrow 7 ,  8k+18\rightarrow 2, 1, 3, 8k+14\rightarrow 6, \\
& 4\rightarrow 8k+12, 8k+11\rightarrow 11, 9\rightarrow 8k+17 ],\\

c\{1^2,2^4,8^{8k+13}\}=\hspace{-0.3cm} &[0\rightarrow 8k+16 ,  4\rightarrow 8k+12,
8k+13\rightarrow 5, 7\rightarrow 8k+15 ,  3\rightarrow 8k+11, \\
& 8k+9\rightarrow 1, 8k+19,8k+17, 8k+18 ,  6\rightarrow 8k+14 ,  2\rightarrow 8k+10 ],\\

c\{1^2,2^4,8^{8k+14}\}=\hspace{-0.3cm} &[0\rightarrow 8k+16 ,  3\rightarrow 8k+19,
8k+17\rightarrow 1, 2\rightarrow 8k+18, 8k+20\rightarrow 4, \\
& 6\rightarrow 8k+14, 8k+15\rightarrow 7,
5\rightarrow 8k+13 ],\\
\end{array}
\right.$$
\end{footnotesize}

\begin{footnotesize}
$$\left.\begin{array}{rl}
c\{1^2,2^4,8^{8k+15}\}=\hspace{-0.3cm} &[0\rightarrow 8k+16 ,  2\rightarrow 8k+18,
8k+20\rightarrow 4, 5\rightarrow 8k+21, 8k+19\rightarrow 11, \\
& 9\rightarrow 8k+17 ,  3, 1, 8k+15\rightarrow 7, 6\rightarrow 8k+14 ],\\

c\{1^2,2^4,8^{8k+16}\}=\hspace{-0.3cm} &[0\rightarrow 8k+16 ,  1\rightarrow 8k+17,
8k+15\rightarrow 7, 5\rightarrow 8k+21, 8k+22\rightarrow 6, \\
& 4\rightarrow 8k+20, 8k+18\rightarrow 2, 3\rightarrow 8k+19 ],\\[4pt]

c\{1^3,2,8^{8k+12}\}=\hspace{-0.3cm} & [0\rightarrow 8k+16 ,  7\rightarrow 8k+15 ,
6\rightarrow 8k+14 ,  5\rightarrow 8k+13, 8k+12\rightarrow 4, \\
& 3\rightarrow 8k+11, 8k+9\rightarrow 1, 2\rightarrow 8k+10 ],\\

c\{1^3,2,8^{8k+13}\}=\hspace{-0.3cm} & [0\rightarrow 8k+16 ,  6\rightarrow 8k+14 ,
4\rightarrow 8k+12, 8k+13\rightarrow 5 ,  8k+15\rightarrow 7 ,  8k+17,\\
& 1\rightarrow 8k+9, 8k+10\rightarrow 2, 3\rightarrow 8k+11 ],\\

c\{1^3,2,8^{8k+14}\}=\hspace{-0.3cm} & [0\rightarrow 8k+16 ,  5\rightarrow 8k+13,
8k+15\rightarrow 7 ,  8k+18\rightarrow 2, 1\rightarrow 8k+17 , \\
& 6\rightarrow 8k+14, 3, 4\rightarrow 8k+12, 8k+11\rightarrow 11 ],\\

c\{1^3,2,8^{8k+15}\}=\hspace{-0.3cm} & [0\rightarrow 8k+16 ,  4\rightarrow 8k+12,
8k+14\rightarrow 6, 5\rightarrow 8k+13 ,  1\rightarrow 8k+17,\\
& 8k+18\rightarrow 2, 3\rightarrow 8k+19 ,
7\rightarrow 8k+15 ],\\

c\{1^3,2,8^{8k+16}\}=\hspace{-0.3cm} & [0\rightarrow 8k+16 ,  3\rightarrow 8k+19,
8k+18\rightarrow 2, 1\rightarrow 8k+17 ,  4\rightarrow 8k+20 ,  \\
& 7\rightarrow 8k+15, 8k+13\rightarrow 5,
6\rightarrow 8k+14 ],\\

c\{1^3,2,8^{8k+17}\}=\hspace{-0.3cm} & [0\rightarrow 8k+16 ,  2\rightarrow 8k+18 ,
4\rightarrow 8k+20, 8k+19\rightarrow 3 ,  8k+17\rightarrow 1 , \\
&  8k+21\rightarrow 5, 6\rightarrow 8k+14,
8k+15\rightarrow 7 ],\\

c\{1^3,2,8^{8k+18}\}=\hspace{-0.3cm} & [0\rightarrow 8k+16 ,  1\rightarrow 8k+17 ,
2\rightarrow 8k+18 ,  3\rightarrow 8k+19 ,  4\rightarrow 8k+20, 8k+21, \\
& 8k+22\rightarrow 6,
7\rightarrow 8k+15, 8k+13\rightarrow 5 ],\\[4pt]

c\{1^3,2^2,8^{8k+11}\}=\hspace{-0.3cm} & [0\rightarrow 8k+16 ,  7\rightarrow 8k+15 ,
6\rightarrow 8k+14, 8k+13\rightarrow 5, 4\rightarrow 8k+12,\\
 & 8k+10\rightarrow 2, 3\rightarrow 8k+11,
8k+9\rightarrow 1 ],\\

c\{1^3,2^2,8^{8k+12}\}=\hspace{-0.3cm} & [0\rightarrow 8k+16 ,  6\rightarrow 8k+14 ,
4\rightarrow 8k+12, 8k+13\rightarrow 5, 3\rightarrow 8k+11, \\
& 8k+10\rightarrow 2,
1\rightarrow 8k+17,
8k+15\rightarrow 7 ],\\

c\{1^3,2^2,8^{8k+13}\}=\hspace{-0.3cm} & [0\rightarrow 8k+16 ,  5\rightarrow 8k+13,
8k+15\rightarrow 7 ,  8k+18\rightarrow 2, 1\rightarrow 8k+17 ,  6\rightarrow 8k+14,\\
&
8k+12,
 8k+11\rightarrow 3, 4\rightarrow 8k+4 ],\\

c\{1^3,2^2,8^{8k+14}\}=\hspace{-0.3cm} & [0\rightarrow 8k+16 ,  4\rightarrow 8k+12,
8k+14\rightarrow 6, 5\rightarrow 8k+13 ,  1\rightarrow 8k+17, 8k+15\rightarrow 7 ,\\
& 8k+19, 8k+18\rightarrow 2, 3\rightarrow 8k+11 ] ,\\

c\{1^3,2^2,8^{8k+15}\}=\hspace{-0.3cm} & [0\rightarrow 8k+16 ,  3\rightarrow 8k+19,
8k+17\rightarrow 1, 2\rightarrow 8k+18 ,  5\rightarrow 8k+13, 8k+12\rightarrow 4,\\
& 6\rightarrow 8k+14,
 8k+15\rightarrow 7 ,  8k+20 ],\\

c\{1^3,2^2,8^{8k+16}\}=\hspace{-0.3cm} & [0\rightarrow 8k+16 ,  2\rightarrow 8k+18 ,
4\rightarrow 8k+20, 8k+19\rightarrow 3 ,  8k+17\rightarrow 1, 8k+21\rightarrow 5, 7,\\
&  6\rightarrow 8k+14, 8k+15\rightarrow 15 ],\\

c\{1^3,2^2,8^{8k+17}\}=\hspace{-0.3cm} & [0\rightarrow 8k+16 ,  1\rightarrow 8k+17 ,
2\rightarrow 8k+18, 8k+20\rightarrow 4 ,  8k+19\rightarrow 3, 5\rightarrow 8k+13,\\
& 8k+14, 8k+15\rightarrow 7, 8k+22, 8k+21 ,  6\rightarrow 8k+6 ],\\[4pt]

c\{1^3,2^3,8^{8k+10}\}=\hspace{-0.3cm} & [0\rightarrow 8k+16 ,  7\rightarrow 8k+15 ,
6\rightarrow 8k+14, 8k+13\rightarrow 5, 4\rightarrow 8k+12, 8k+10\rightarrow 2,\\
& 3, 1\rightarrow 8k+9, 8k+11\rightarrow 11 ],\\

c\{1^3,2^3,8^{8k+11}\}=\hspace{-0.3cm} & [0\rightarrow 8k+16 ,  6\rightarrow 8k+14 ,
4\rightarrow 8k+12, 8k+13\rightarrow 5, 3\rightarrow 8k+11 ,  1, 2\rightarrow 8k+10,\\
&
8k+9\rightarrow 9, 7\rightarrow 8k+15, 8k+17 ],\\

c\{1^3,2^3,8^{8k+12}\}=\hspace{-0.3cm} & [0\rightarrow 8k+16 ,  5\rightarrow 8k+13,
8k+15\rightarrow 7 ,  8k+18\rightarrow 2, 1\rightarrow 8k+17 ,  6\rightarrow 8k+6,\\
& 8k+4\rightarrow 4, 3\rightarrow 8k+11, 8k+12, 8k+14 ],\\

c\{1^3,2^3,8^{8k+13}\}=\hspace{-0.3cm} & [0\rightarrow 8k+16 ,  4\rightarrow 8k+12,
8k+14\rightarrow 6, 5\rightarrow 8k+13, 8k+15\rightarrow 7, 8k+19, \\
& 1\rightarrow 8k+17,
8k+18\rightarrow 2, 3\rightarrow 8k+11 ],\\

c\{1^3,2^3,8^{8k+14}\}=\hspace{-0.3cm} & [0\rightarrow 8k+16 ,  3\rightarrow 8k+19,
8k+17\rightarrow 1, 2\rightarrow 8k+18, 8k+20\rightarrow 4, \\
& 5\rightarrow 8k+13,
8k+15\rightarrow 7, 6\rightarrow 8k+14 ],\\

c\{1^3,2^3,8^{8k+15}\}=\hspace{-0.3cm} & [0\rightarrow 8k+16 ,  2\rightarrow 8k+18,
4\rightarrow 8k+20, 8k+19\rightarrow 3, 5\rightarrow 8k+13, 8k+14\rightarrow 6,\\
& 7\rightarrow 8k+15, 8k+17\rightarrow 1, 8k+21 ],\\

c\{1^3,2^3,8^{8k+16}\}=\hspace{-0.3cm} & [0\rightarrow 8k+16 ,  1\rightarrow 8k+17 ,
2\rightarrow 8k+18, 8k+20\rightarrow 4, 3\rightarrow 8k+19, 8k+21,\\
& 8k+22, 7\rightarrow
8k+15, 8k+13\rightarrow 5, 6\rightarrow 8k+14 ],\\[4pt]

c\{1^4,2,8^{8k+11}\}=\hspace{-0.3cm} & [0\rightarrow 8k+16 ,  7\rightarrow 8k+15 ,
6\rightarrow 8k+14 ,  5\rightarrow 8k+13 ,  8k+12\rightarrow 4, \\
& 3\rightarrow 8k+11,  8k+9, 8k+10 \rightarrow 2, 1\rightarrow 8k+1 ],\\

c\{1^4,2,8^{8k+12}\}=\hspace{-0.3cm} & [0\rightarrow 8k+16 ,  6\rightarrow 8k+14 ,
4\rightarrow 8k+12, 8k+13\rightarrow 5 ,  8k+15\rightarrow 7 ,  \\
& 8k+17\rightarrow 9,10\rightarrow 8k+10, 8k+11\rightarrow 3, 1, 2 ],\\

c\{1^4,2,8^{8k+13}\}=\hspace{-0.3cm} & [0\rightarrow 8k+16 ,  5\rightarrow 8k+13,
8k+14\rightarrow 6, 7\rightarrow 8k+15 ,  4\rightarrow 8k+12 ,\\
& 1\rightarrow 8k+9,
8k+11\rightarrow 3, 2\rightarrow 8k+18, 8k+17 ],\\
\end{array}
\right.$$
\end{footnotesize}

\begin{footnotesize}
$$\left.\begin{array}{rl}
c\{1^4,2,8^{8k+14}\}=\hspace{-0.3cm} & [0\rightarrow 8k+16 ,  4\rightarrow 8k+12,
8k+14\rightarrow 6, 5\rightarrow 8k+13 ,  1\rightarrow 8k+17, 8k+18,\\
& 8k+19 ,
7\rightarrow 8k+15 ,  3\rightarrow 8k+11, 8k+10\rightarrow 2 ],\\

c\{1^4,2,8^{8k+15}\}=\hspace{-0.3cm} & [0\rightarrow 8k+16 ,  3\rightarrow 8k+19,
8k+18\rightarrow 2, 1\rightarrow 8k+17, 8k+15\rightarrow 7 , \\
& 8k+20\rightarrow 4,
5\rightarrow 8k+13, 8k+14\rightarrow 6 ] ,\\

c\{1^4,2,8^{8k+16}\}=\hspace{-0.3cm} & [0\rightarrow 8k+16 ,  2\rightarrow 8k+18 ,
4\rightarrow 8k+20, 8k+19\rightarrow 3 ,  8k+17\rightarrow 1 , \\
& 8k+21\rightarrow 5, 6,7\rightarrow 8k+15 ,  8k+14\rightarrow 14 ],\\

c\{1^4,2,8^{8k+17}\}=\hspace{-0.3cm} & [0\rightarrow 8k+16 ,  1\rightarrow 8k+17 ,
2\rightarrow 8k+18 ,  3\rightarrow 8k+19 ,  4\rightarrow 8k+12, \\
& 8k+13\rightarrow 5,
6\rightarrow 8k+14, 8k+15\rightarrow 7 ,  8k+22, 8k+20, 8k+21 ],\\[4pt]

c\{1^4,2^2,8^{8k+10}\}=\hspace{-0.3cm} & [0\rightarrow 8k+16 ,  7\rightarrow 8k+15 ,
6\rightarrow 8k+14 ,  5\rightarrow 8k+5, 8k+4\rightarrow 4, \\
& 2\rightarrow 8k+10,
8k+9\rightarrow 1, 3\rightarrow 8k+11, 8k+12, 8k+13 ],\\

c\{1^4,2^2,8^{8k+11}\}=\hspace{-0.3cm} & [0\rightarrow 8k+16 ,  6\rightarrow 8k+14 ,
4\rightarrow 8k+12, 8k+13\rightarrow 5 ,  8k+15\rightarrow 7 ,  8k+17,\\
& 1\rightarrow
8k+1, 8k+2\rightarrow 2, 3\rightarrow 8k+11, 8k+9, 8k+10 ],\\

c\{1^4,2^2,8^{8k+12}\}=\hspace{-0.3cm} & [0\rightarrow 8k+16 ,  5\rightarrow 8k+13,
8k+14\rightarrow 6, 7\rightarrow 8k+15 ,  4\rightarrow 8k+12, 1, 3, \\
& 2\rightarrow 8k+18,
8k+17\rightarrow 9, 11\rightarrow 8k+11 ],\\

c\{1^4,2^2,8^{8k+13}\}=\hspace{-0.3cm} & [0\rightarrow 8k+16 ,  4\rightarrow 8k+12,
8k+14\rightarrow 6, 5\rightarrow 8k+13 ,  1\rightarrow 8k+17,\\
& 8k+15\rightarrow 7 ,
8k+19, 8k+18, 8k+10, 8k+11\rightarrow 3, 2\rightarrow 8k+2 ],\\

c\{1^4,2^2,8^{8k+14}\}=\hspace{-0.3cm} & [0\rightarrow 8k+16 ,  3\rightarrow 8k+19,
8k+17\rightarrow 1, 2\rightarrow 8k+18, 8k+20\rightarrow 4, \\
& 5\rightarrow 8k+13,
8k+14\rightarrow 6, 7\rightarrow 8k+15 ],\\

c\{1^4,2^2,8^{8k+15}\}=\hspace{-0.3cm} & [0\rightarrow 8k+16 ,  2\rightarrow 8k+18 ,
4\rightarrow 8k+20, 8k+19\rightarrow 3 ,  8k+17\rightarrow 1,\\
& 8k+21\rightarrow 13,
14\rightarrow 8k+14, 8k+15\rightarrow 7, 5, 6 ],\\

c\{1^4,2^2,8^{8k+16}\}=\hspace{-0.3cm} & [0\rightarrow 8k+16 ,  1\rightarrow 8k+17 ,
2\rightarrow 8k+18, 8k+20\rightarrow 4 ,  8k+19\rightarrow 3, \\
& 5\rightarrow 8k+5,
8k+6\rightarrow 6, 7\rightarrow 8k+15, 8k+14, 8k+22, 8k+21, 8k+13
],\\[4pt]

c\{1^5,2,8^{8k+10}\}=\hspace{-0.3cm} & [0\rightarrow 8k+16 ,  7\rightarrow 8k+15 ,
6\rightarrow 8k+14, 8k+13\rightarrow 5, 4\rightarrow 8k+12, 8k+11 , \\
& 2\rightarrow 8k+2,
8k+3\rightarrow 3, 1\rightarrow 8k+9, 8k+10 ],\\

c\{1^5,2,8^{8k+11}\}=\hspace{-0.3cm} & [0\rightarrow 8k+16 ,  6\rightarrow 8k+14 ,
4\rightarrow 8k+12, 8k+13\rightarrow 5 ,  8k+15\rightarrow 7 ,  8k+17, \\
& 1\rightarrow
8k+1, 8k+2\rightarrow 2, 3\rightarrow 8k+11, 8k+10, 8k+9 ],\\

c\{1^5,2,8^{8k+12}\}=\hspace{-0.3cm} & [0\rightarrow 8k+16 ,  5\rightarrow 8k+13,
8k+14\rightarrow 6, 7\rightarrow 8k+15 ,  4\rightarrow 8k+12,\\
& 8k+11\rightarrow 3, 1,
2\rightarrow 8k+18, 8k+17\rightarrow 9 ],\\

c\{1^5,2,8^{8k+13}\}=\hspace{-0.3cm} & [0\rightarrow 8k+16 ,  4\rightarrow 8k+12,
8k+14\rightarrow 6, 5\rightarrow 8k+13 ,  1\rightarrow 8k+17, 8k+18, \\
& 8k+19 ,
7\rightarrow 8k+15, 3, 2\rightarrow 8k+10, 8k+11\rightarrow 11 ],\\

c\{1^5,2,8^{8k+14}\}=\hspace{-0.3cm} & [0\rightarrow 8k+16 ,  3\rightarrow 8k+19,
8k+18\rightarrow 2, 1\rightarrow 8k+17, 8k+15\rightarrow 7,\\
& 6\rightarrow 8k+14,
8k+13\rightarrow 5, 4\rightarrow 8k+20 ],\\

c\{1^5,2,8^{8k+15}\}=\hspace{-0.3cm} & [0\rightarrow 8k+16 ,  2\rightarrow 8k+18 ,
4\rightarrow 8k+20, 8k+19\rightarrow 3 ,  8k+17\rightarrow 1, \\
& 8k+21\rightarrow 13,
14\rightarrow 8k+14, 8k+15\rightarrow 7, 6, 5 ],\\

c\{1^5,2,8^{8k+16}\}=\hspace{-0.3cm} &  [0\rightarrow 8k+16 ,  1\rightarrow 8k+17 ,
2\rightarrow 8k+18 ,  3\rightarrow 8k+19 , 4\rightarrow 8k+20, 8k+22, \\
& 8k+21,
8k+13, 8k+14, 8k+15\rightarrow 7, 6\rightarrow 8k+6, 8k+5\rightarrow 5 ].
\end{array}
\right.$$
\end{footnotesize}
\end{proof}

\subsection*{Acknowledgments}
The authors thank  the anonymous referee whose helpful comments and suggestions
improved the presentation of the paper.

\end{document}